\numberwithin{equation}{section}
\newtheorem{thm}{Theorem}[section]
\newtheorem{lem}[thm]{Lemma}
\newtheorem{sub-lem}[thm]{Sub-Lemma}
\newtheorem{sublem}[thm]{Sub-lemma}
\newtheorem{prop}[thm]{Proposition}
\newtheorem{defin}[thm]{Definition}
\newtheorem{rem}[thm]{Remark}
\newcommand\cA{{\mathcal A}}
\newcommand\cC{{\mathcal C}}
\newcommand\cP{{\mathcal P}}
\newcommand\cW{{\mathcal W}}
\newcommand\cZ{{\mathcal Z}}
\newcommand\bC{{\mathbb C}}
\newcommand\bN{{\mathbb N}}
\newcommand\bR{{\mathbb R}}
\newcommand\bW{{\mathbb W}}
\newcommand\bZ{{\mathbb Z}}
\newcommand\bbf{{\mathbbm f}}
\newcommand\Id{{\mathbbm{1}}}
\newcommand\ve{\varepsilon}
\newcommand\vf{\varphi}
\newcommand\bsP{{\boldsymbol P}}
\newcommand{\Const}{C_{\#}}
\newcommand{\norm}[1]{\left\|#1\right\|}
\newcommand{\abs}[1]{\left|#1\right|}
\newcommand {\notcurlywedge}{\not\hskip-3pt\curlywedge\hskip2pt}
\begin{document}

\title{Piecewise contractions}
\author{Sakshi Jain}
\address{Sakshi Jain\\
Monash University\\
9 Rainforest Walk, Melbourne 3800, Australia.}
\email{{\tt sakshi.jain@monash.edu}}
\author{Carlangelo Liverani}
\address{Carlangelo Liverani\\
Dipartimento di Matematica\\
II Universit\`{a} di Roma (Tor Vergata)\\
Via della Ricerca Scientifica, 00133 Roma, Italy.}
\email{{\tt liverani@mat.uniroma2.it}}
\date{\today}
\begin{abstract}
We study piecewise injective, but not necessarily globally injective, contracting maps on a compact subset of \(\bR^d\). We prove that generically the attractor and the set of discontinuities of such a map are disjoint, and hence the attractor consists of periodic orbits. In addition, we prove that piecewise injective contractions are generically topologically stable.
\end{abstract}
\keywords{Contracting maps, attractors, generic properties.}
\thanks{We thank Henk Bruin, Oliver Butterley, Giovanni Canestrari, Roberto Castorrini, Stefano Galatolo, Arnaldo Nogueira, Benito Pires, and Antonio Rapagnetta for helpful discussions, comments, and suggestions. We would also like to thank the anonymous referee for pointing out the inconsistencies and typographical errors in the previous version. This work was supported by the PRIN Grants ``Regular and stochastic behaviour in dynamical systems" (PRIN 2017S35EHN), ``Stochastic properties of dynamical systems" (PRIN 2022NTKXCX), and by the MIUR Excellence Department Project Math@TOV awarded to the Department of Mathematics, University of Rome Tor Vergata. SJ was additionally funded by ARC (DP220100492) and acknowledges Monash University and CL acknowledges membership to the GNFM/INDAM}
\maketitle

\tableofcontents

\section{Introduction}
     Studying the dynamical properties of discontinuous hyperbolic dynamical systems is important to understand many relevant systems (like billiards and optimal control theory etc.) but it is difficult if the system is multi-dimensional. A first step to address this problem can be to divide the problem into two cases: the piecewise expanding case and the piecewise contracting case. Here we address the case of piecewise contracting systems. Since the attractor can differ drastically depending on the kind of piecewise contraction, a first approach is to identify a large class of piecewise contractions that exhibit similar dynamical properties.  \par
     In recent decades, the properties of the attractor of piecewise contractions have been studied under different settings  (see \cite{BD}, \cite{CGM}, \cite{CGMU}, \cite{NP}, \cite{NPR}). To begin with, H. Bruin, J.H.B. Deane (see \cite{BD}) studied families of piecewise linear contractions on the complex plane \(\bC\) and proved that, for almost all parameters, each orbit is asymptotically periodic. \par
	 In the case of {\em one dimensional} piecewise contractions, A. Nogueira and B. Pires, in \cite{NP}, studied {\em globally} injective piecewise contractions on half closed unit interval \([0,1)\) with partition of continuity consisting of \(n\) elements and concluded that such maps have at most \(n\) periodic orbits, that is, the attractor can be a Cantor set or a collection of at most \(n\) periodic orbits or a union of a Cantor set and at most \(n-1\) periodic orbits. In particular, when the attractor consists of exactly \(n\) periodic orbits, the map is asymptotically periodic.\footnote{The limit set of every element in the domain is a periodic orbit.} They also proved that every such map on \(n\) intervals is topologically conjugate to a piecewise linear contraction of \(n\) intervals whose slope in absolute value equals 1/2. We would like to emphasise that this result does not imply that any two piecewise contractions close enough to each other are topologically conjugate to each other. Hence, this result is not a stability result for the class of piecewise contractions. In \cite{GN}, piecewise increasing contractions on \(n\) intervals are considered and it is proved that the maximum umber of periodic orbits is \(n\). They also prove that the collection parameters which give a piecewise contraction with non-asymptotically periodic orbits is a Lebesgue null measure set whose Hausdorff dimension is large or equal to \(n\). In \cite{NPR}, A. Nogueira, B. Pires  and  R. Rosales proved that generically (under \(\cC^0\) topology) globally injective piecewise contractions of \(n\) intervals are asymptotically periodic and have at least one and at most \(n\) internal\footnote{Such orbits persist under a sufficiently small \(\cC^0\)-perturbation, refer to \cite{NPR} for precise definition.} periodic orbits. In \cite{CCG}, A. Calderon, E. Catsigeras and P. Guiraud proved that the attractor of a piecewise injective contraction consists of finitely many periodic orbits and minimal Cantor sets. In \cite{LN18,JO}, they study symbolic coding associated to piecewise contractions on the unit interval proving that they are rleated to the symbolic coding  of rotations of the circle.\par
	 In {\em higher dimensions}, E. Catsigeras and R. Budelli (see \cite{CB}) proved that a finite dimensional piecewise contracting system with separation property,\footnote{Injective on the entire domain except for the discontinuity set.} generically (under a topology that is finer than the topology we use for proving openness and coarser than the topology we use for proving density) exhibits at most a finite number of periodic orbits as its attractor. Here, we obtain similar, actually stronger, results without assuming the separation property.\par
     In any (finite) dimension, in \cite{CGMU}, the authors show that if the set of discontinuities and the attractor of a piecewise contraction are mutually disjoint, then the attractor consists of finitely many periodic orbits. This result is a byproduct of our arguments as well. In \cite{GT88} they study symbolic dynamics associated to piecewise contractions (referred to as quasi-contractions in that article) and categorise its association into different kinds of circle rotations.\par
	  After having sumitted this article for submission, we noticed a new submission \cite {GP24} the next month, where they provide a measure-theoretical criteria for asymptotic periodicity of a parametrised family of locally bi-Lipschitz piecewise contractions on a compact metric space.\par
	 Nevertheless, the occurrence of chaotic behavior in such systems has been addressed in the literature,  \cite{KR} provides an example of a piecewise affine contracting map with positive entropy. The presence of a Cantor set in the attractor has also been studied rigorously. Examples of such maps, for one-dimension, are given in \cite[Example 4.3]{CGMU}, \cite{RC}  and, for three-dimensions, in \cite{CGMU} (Example 5.1) where it is also proved that such a piecewise contraction turns out to have positive topological entropy. In \cite{Pires19}, it is proven that, given a minimal interval exchange transformation with any number of discontinuities, there exists an injective piecewise contractions with Cantor attractors topologically conjugate to it, also conversely that, piecewise contractions with Cantor attractors are topologically semiconjugate to topologically transitive interval exchange transformations. Additionally, in \cite{Pires19} (respectively in \cite{CGM}), it is proved that the complexity\footnote{The complexity function of the itineraries of orbits, refer to \cite{CGM}, \cite{Pires19}.} of a {\em globally injective} piecewise contractions (respectively, piecewise contraction with separation property) on the interval is eventually affine (which is eventually constant in the case of piecewise contractions with periodic attractors).  \par
	 The global picture presented by the above articles is that the Cantor attractors are rare, but can exist in exceptional (but constructible) cases, and many such explicit examples have been rigorously studied. \par
     Piecewise contractions have also been used to study some models of outer billiards (see \cite{Gaivão20}, \cite{Jeong12}, \cite{MGG}) where a billiard map is constructed such that it is a piecewise contraction, and so the properties of piecewise contractions are relevant in the study of such billiard maps.\par
	 Note that in the above papers (and in this article), maps with only finitely many partition elements are considered.     \par
	 The layout of this article is as follows:\\
     Section~\ref{sec:results} is dedicated to definitions, settings, and the statement of results. In Section~\ref{sec:op} we prove that the set of piecewise contractions with attractor disjoint from the set of discontinuities is open, under a rather coarse topology, and that if the maps are also piecewise injective (hence not necessarily globally injective), then they are topologically stable.
     In Section~\ref{sec:den}, we prove that piecewise contractions with the attractor disjoint from the set of discontinuities are dense, under a rather fine topology, among the piecewise injective smooth contractions. Finally, we have three appendices collecting some needed technical facts.
\section{Settings and results} \label{sec:results}
      Throughout this article, we work with \((X,d_0)\subset (\bR^d,d_0)\), a  compact subset of \(\bR^d\) where \(d\in\bN\) and \(d_0\) is the standard euclidean metric on \(\bR^d\). Under these settings, we define a piecewise contraction as follows:
\begin{defin}[Piecewise contraction]\label{def:pc}
     A map \(f:X\to X\) is called a piecewise contraction if $\overline{f(X)}\subset  \mathring{X}$ and there exist \(m\in \bN\) and a collection \(\bsP(f)=\{P_i: P_i=\mathring{ P_i}\}_{i=1}^m\) of subsets  of \(X\) such that : 
\begin{itemize}
	 \item \(X=\cup_{i=1}^{m}\overline P_i\) where \( P_i\cap P_j=\emptyset\) whenever \(i\neq j\).
	 \item \(f|_{P_i}\) is a uniform contraction, that is, there exists \(\lambda \in (0,1)\) such that for all \(i\in\{1,2,\ldots,m\}\), and for any \(x,y\in P_i\),
	     \[
		 d_0(f(x),f(y))\leq \lambda d_0(x,y).
		 \]
	 \item  there exists a partition $\{\tilde P_i\}$, $\tilde P_i\cap \tilde P_j=\emptyset$ for $i\neq j$, $P_i\subset \tilde P_i\subset \overline{P}_i$, \(X=\cup_{i=1}^{m}\tilde P_i\) such that $f|_{\tilde P_i}$ is continuous.
\end{itemize} 
     Here \(\lambda\) is called the `contraction coefficient' of \(f\), \(\bsP(f)\) is called the `partition of continuity,' and \(\overline{P_i}\) and \(\partial P_i\) represent the closure and boundary respectively of a partition element \(P_i\).
\end{defin}

\begin{rem} The third condition pertaining to the partition $\tilde P$, states that the values of $f$ on the boundary of partition elements must be the limit of the values of $f$ inside some elements, but no particular condition is imposed on which element.
\end{rem}
     For a piecewise contraction \(f\) with partition of continuity \(\bsP(f)=\{P_1, P_2, \ldots,P_m\}\), define \(\partial\bsP(f)\) as the boundary of the partition \(\bsP(f)\) given by 
         \[
         \partial\bsP(f)=\cup_{i=1}^m \partial P_i.
         \] 
We denote by \(\Delta(f)\), the union of the set of discontinuities of \(f\) with \(\partial X\). To avoid confusion in the choice of partition of continuity for a given piecewise contraction, we define the following:
\begin{defin}[Maximal Partition]
     A partition \(\bsP(f)\) is called the maximal partition of a piecewise contraction \(f\) if \(\partial\bsP(f)=\Delta(f)\).
\end{defin} 
     
Any partition of continuity of \(f\) is a refinement of the maximal partition. For a piecewise contraction with maximal partition \(\bsP(f)=\{P_1^1,P_2^1,\ldots,P_m^1\}\), let us define the  partition  \(\bsP(f^n)= \{P_1^n,P_2^n,\ldots,P_{m_n}^n\}\), relative to the \(n\)th iterate \(f^n\) of \(f\),  where, for every \(k\in\{1,2,\ldots,m_n\}\),
\begin{equation}{\label{eq:n-partn}} 
	 P_{k}^n= P_{i_0}^1\cap f^{-1}(P_{i_1}^1)\cap f^{-2}(P_{i_2}^1)\cap \ldots\cap f^{-(n-1)}(P_{i_{n-1}}^1). 
\end{equation} 
     where \(i_j\in\{1,2,\ldots,m\}\) for every \(j\in\{0,1,\ldots,n-1\}\). Note that \(\bsP(f)\) being the maximal partition of \(f\) does not imply that the partition \(\bsP(f^n)\) is the maximal partition of \(f^n\).
\begin{rem}
	  Throughout this article, for a given piecewise contraction \(f\), the partition we consider is the maximal partition \(\bsP(f)\), whereas for its iterates \(f^n\) we consider the partition \(\bsP(f^n)\), as defined in equation~(\ref{eq:n-partn}), which may not be the maximal partition of \(f^n\).  
\end{rem}
 One of the goals of this article is to understand the attractor of piecewise contractions. On that note we recall the standard definition of attractor:
\begin{defin}[Attractor]\label{def:att}
	 For a piecewise contraction \(f\) with \(\bsP(f)=\{P_i\}_{i=1}^m\), the attractor is defined as \(\Lambda(f)=\bigcap\limits_{n\in \bN}\overline{f^n(X)}\).
\end{defin}

When working with discontinuous maps, it is natural to talk of Markov maps (maps with Markov partitions), so we first give the following definition followed by the definiton of Markov maps in our settings:

\begin{defin}[Stabilisation of Partition]\label{def:stp}
	For a piecewise contraction \(f\), we say that the maximal partition  \(\bsP(f)\) stabilises if there exists \(N\in \bN\) such that for all \(P\in \bsP(f^N)\) there exists \(Q\in \bsP(f^N)\) such that \(\overline{f^N(P)}\subset Q\). We call the least such number \(N\), the ``stabilisation time" of \(\bsP(f)\). 
\end{defin}
 
\begin{defin}[Markov Map]\label{def:markov}
	 A piecewise contraction whose maximal partition stabilises is called a Markov map.
\end{defin}	

Now, we are able to state our first result (proved in Section~\ref{sec:op}):

\begin{thm}\label{thm:eq}
	A piecewise contraction \(f\) with \(\Lambda(f)\) as the attractor and \(\Delta(f)\) as the union of the set of discontinuities and \(\partial X\), satisfies that \(\Lambda(f)\cap\Delta(f)=\emptyset \)  if and only if it is Markov. Moreover, the attractor of a Markov map consists of periodic orbits.
   \end{thm}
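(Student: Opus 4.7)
Write $Y_n:=\overline{f^n(X)}$, so $\{Y_n\}_{n\geq 0}$ is a decreasing sequence of nonempty compact sets with $\Lambda(f)=\bigcap_n Y_n$, and observe that $\Delta(f)$ is closed (a finite union of boundaries together with $\partial X$). I will treat the two implications of the equivalence separately and then deduce the periodic-orbit statement.

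For $\Lambda(f)\cap\Delta(f)=\emptyset \Rightarrow f$ Markov: a standard compactness argument on the nested family $\{Y_n\}$ and the closed set $\Delta(f)$ yields some $N$ with $Y_N\cap\Delta(f)=\emptyset$, hence $\epsilon:=\dist(Y_N,\Delta(f))>0$. Pick $M\geq N$ with $\lambda^M\operatorname{diam}(X)<\epsilon$. For any $P_k^M\in\bsP(f^M)$, the branch $f^M|_{P_k^M}$ is a $\lambda^M$-contraction, so $\overline{f^M(P_k^M)}\subset Y_M\subset Y_N$ has diameter $<\epsilon$. Any Euclidean open ball $B_\epsilon(y_0)$ centered at a point of $Y_N$ is connected, lies in $\mathring X$, and avoids $\Delta(f)$; since $X\setminus\Delta(f)=\bigsqcup_i P_i^1$ is a disjoint union of open sets, $B_\epsilon(y_0)$ lands inside a single $P_{j_0}^1$, and hence so does $\overline{f^M(P_k^M)}$. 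Now iterate: $f$ is continuous on $P_{j_0}^1$, so $\overline{f^{M+1}(P_k^M)}=f\bigl(\overline{f^M(P_k^M)}\bigr)$ has diameter $<\lambda\epsilon<\epsilon$, still sits inside $Y_N$, and therefore lies in a single $P_{j_1}^1$. After $M$ such steps one obtains a common itinerary $(j_0,\dots,j_{M-1})$ for every point of $\overline{f^M(P_k^M)}$, so $\overline{f^M(P_k^M)}\subset P_\ell^M$ for the corresponding label $\ell$, which is precisely the stabilisation of Definition~\ref{def:stp}.

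For the converse, assume $\bsP(f^N)$ stabilises, giving a self-map $\sigma$ on partition labels with $\overline{f^N(P_k^N)}\subset P_{\sigma(k)}^N$. Each $P_{\sigma(k)}^N$ is open and lies inside some element of $\bsP(f)$, hence is disjoint from $\Delta(f)$. Combined with $Y_N\subset\bigcup_k\overline{f^N(P_k^N)}$ (which follows from $X=\bigcup_k\overline{P_k^N}$ together with the continuous extensions of $f^N$ along each $P_k^N$, relying on the $\tilde P_i$ partition of Definition~\ref{def:pc} to evaluate $f^N$ at boundary points), this yields $Y_N\cap\Delta(f)=\emptyset$ and hence $\Lambda(f)\cap\Delta(f)=\emptyset$. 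For the periodic-orbit statement, $\sigma$ acts on a finite set, so every label is eventually periodic; for each cycle $(k_0,\dots,k_p=k_0)$, $f^{Np}$ extends continuously to $\overline{P_{k_0}^N}$, maps it into itself, and is a $\lambda^{Np}$-contraction, so Banach's fixed-point theorem furnishes a periodic orbit of $f$. Since $\operatorname{diam}(\overline{f^{jN}(P_k^N)})\leq\lambda^{jN}\operatorname{diam}(X)\to 0$ and $\sigma^j(k)$ eventually enters a cycle, each $\overline{f^{jN}(P_k^N)}$ shrinks to a point on one of these periodic orbits, so $\Lambda(f)=\bigcap_j\bigcup_k\overline{f^{jN}(P_k^N)}$ is the finite union of the orbits corresponding to the cycles of $\sigma$.

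The main obstacle is the iterative containment argument in the forward direction, which must simultaneously maintain the shrinking diameter (from contraction) and the persistent $\epsilon$-separation from $\Delta(f)$ (from invariance of $Y_N$) across $M$ consecutive steps, in order to transport the ``a small ball lies in a single cell'' observation along the entire length-$M$ trajectory. A subsidiary technical point needed in the converse is the covering $Y_N\subset\bigcup_k\overline{f^N(P_k^N)}$, which requires care at boundary points and is mediated by the $\tilde P_i$ partition in Definition~\ref{def:pc}.
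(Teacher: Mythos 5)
Your proof is correct and takes essentially the same route as the paper: compactness of the nested family $\{\overline{f^n(X)}\}$ to obtain an iterate whose image is $\ve$-separated from $\Delta(f)$, containment of the shrunken images of cells in single elements of $\bsP(f^N)$ (hence stabilisation), and Banach's fixed-point theorem together with the pigeonhole principle on the finite label set for the periodic-orbit statement. You are somewhat more explicit than the paper in the forward direction, where you choose $M$ with $\lambda^M\operatorname{diam}(X)<\ve$ and iterate the ``a small connected ball avoiding $\Delta(f)$ lies in a single element of $\bsP(f)$'' observation along the length-$M$ itinerary; the paper instead asserts directly that failure of stabilisation at $N_\ve$ produces a point of $f^{N_\ve}(P)\cap\partial\bsP(f^{N_\ve})$, a step your diameter-plus-connectivity argument makes fully rigorous even when the cells $P_i$ are disconnected.
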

   
\begin{rem}\label{rem:symbolic_dyn}
Note that, given a Markov map $f$, $N$ its stabilisation time and $\bsP(f^N)=\{Q_1,\dots, Q_l\}$ the associated partition, then $f$ induces a dynamics $\bbf$ on $\Omega(f):=\{1,\dots, l\}$ by the rule $f(Q_i)\subset Q_{\bbf(i)}$. See Lemma~\ref{lem:refined_partition}.
\end{rem}
To state further results, we need to add some hypotheses on our system, and thus we give the following definition:
\begin{defin}[Piecewise injective contraction]\label{def:pic}
	A piecewise contraction \(f\) with partition \(\bsP(f)=\{P_i\}, i\in\{1,2,\ldots,m\}\), is called a \emph{piecewise injective contraction} if, for all \(i\in\{1,2,\ldots,m\}\), there exists $U_i=\mathring U_i\supset  \overline{P_i}$ and an injective contraction $\tilde f_i: U_i\to\bR^d$ such that \(\tilde f_i|_{U_i}\supset P_i\) and \(\tilde f_i|_{P_i}=f|_{P_i}\).
\end{defin}
For any piecewise contractions \(f,g \) with partitions \(\bsP(f)=\{P_1,P_2,\ldots,P_m\}\), and \(\bsP(g)=\{Q_1,Q_2,\ldots,Q_m\}\) respectively, define 
\[
\begin{split}
H(\bsP(f),\bsP(g))=\Big\{\psi\in \cC^0(X,X)\;:\;& \psi\text{  is an homeomorphism} \\
& \forall\ P \in {\bsP}(f),\ \exists\, !\   Q \in {\bsP}(g): \psi(P)=Q \Big\}.
\end{split}
\] 
We define the distance \(\rho\) as follows (see Lemma~\ref{lem:rho} for the proof that \(\rho\) is a metric):
\[
     \rho(f,g):=
	 \begin{cases}
	   \quad \quad \quad \quad \quad \quad A &\hskip-1cm\text{if }   H(\bsP(f),\bsP(g))=\emptyset\\
      \inf\limits_{\psi\in H(\bsP(f),\bsP(g))}\!\!  \{\norm{\psi-id }_{\cC^0(X,X)}+\norm{f-g\circ\psi}_{\cC^0(X,X)} \}   &  \text{ otherwise,}
     \end{cases}
\]
where \(A=2 \operatorname{diam}(X)\) and \(id\) is the identity function, that is \(id(x)=x\). \\
     Evidently \(\rho(f,g)\leq A\) for any piecewise contractions \(f,g\). Further notice that the metric (proved in Lemma~\ref{lem:rho}) \(\rho\) is essentially a distance between two tuples \((f,\bsP(f))\) and \((g,\bsP(g))\) for any two piecewise contractions \(f,g\) where \(\bsP(f)\) and \(\bsP(g)\) are the maximal partitions of \(f\) and \(g\) respectively.\\
	  For an arbitrary \(\sigma\in (0,1)\) we define the distance \(d_1\) as 
         \[
		 d_1(f,g)=\sum_{n=1}^{\infty}\sigma^{n}\rho(f^n,g^n).
		 \]

     Under this metric, we have the following result (proved in Section~\ref{sec:op}):
		 \begin{thm} \label{thm:op}
			The set of Markov piecewise contractions is open in the set of piecewise contractions under the metric \(d_1\). Moreover, any two Markov piecewise contractions \(f,\ g\) close enough (w.r.t. \(d_1\)), stabilise at the same time.
		\end{thm}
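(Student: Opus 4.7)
The plan is to exploit the weighted structure of $d_1$ to transfer the Markov property of $f$ to any nearby $g$ via a close-to-identity homeomorphism produced by the metric $\rho$. For any fixed $N\in\bN$, the bound
\[
\sigma^{N}\rho(f^{N},g^{N})\le d_1(f,g)
\]
lets me make $\rho(f^{N},g^{N})$ arbitrarily small by shrinking $d_1(f,g)$. In particular, once this falls below $A=2\operatorname{diam}(X)$, the set $H(\bsP(f^{N}),\bsP(g^{N}))$ is nonempty, yielding a homeomorphism $\psi:X\to X$ matching the partitions $\bsP(f^{N})$ and $\bsP(g^{N})$ bijectively with $\|\psi-\mathrm{id}\|_{\cC^{0}}+\|f^{N}-g^{N}\circ\psi\|_{\cC^{0}}$ as small as desired.

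Suppose $f$ is Markov with stabilisation time $N$. Since each $\overline{f^{N}(P)}$ is compactly contained in some $Q_{P}\in\bsP(f^{N})$ (open by construction from~(\ref{eq:n-partn}), using $P_i=\mathring{P_i}$ and the continuity of $f$ on each $P_i$), there exists a uniform margin $\delta>0$ such that the $3\delta$-neighbourhood of every $\overline{f^{N}(P)}$ remains inside $Q_{P}$. Now impose $d_1(f,g)<\sigma^{N}\delta$, so that $\psi$ can be picked with $\|\psi-\mathrm{id}\|_{\cC^{0}}+\|f^{N}-g^{N}\circ\psi\|_{\cC^{0}}<\delta$. For any $P'\in\bsP(g^{N})$, write $P'=\psi(P)$ with $P\in\bsP(f^{N})$; given $y\in\overline{g^{N}(P')}$, pick $x\in\overline{P}$ with $y=g^{N}(\psi(x))$. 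Then $|y-f^{N}(x)|<\delta$, and using the identity $|\psi^{-1}(z)-z|=|\psi^{-1}(z)-\psi(\psi^{-1}(z))|\le\|\psi-\mathrm{id}\|_{\cC^{0}}<\delta$, one obtains $|\psi^{-1}(y)-f^{N}(x)|<2\delta$. Since $f^{N}(x)$ lies at distance more than $3\delta$ from $\partial Q_{P}$, it follows that $\psi^{-1}(y)\in Q_{P}$, hence $y\in\psi(Q_{P})\in\bsP(g^{N})$. Thus $\overline{g^{N}(P')}\subset\psi(Q_{P})$ for each $P'$, which is the stabilisation condition at time $N$ for $g$; so $g$ is Markov with stabilisation time $\le N$, proving openness.

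For the equal-stabilisation claim, I run the same argument with the roles of $f$ and $g$ swapped: if $g$ is Markov with stabilisation time $N_{g}$ and margin $\delta_{g}$, then $d_1(f,g)<\sigma^{N_{g}}\delta_{g}$ forces $f$ to stabilise at time $\le N_{g}$. Combined with $N_{g}\le N_{f}$ from the first direction, the threshold $d_1(f,g)<\min(\sigma^{N_{f}}\delta_{f},\sigma^{N_{g}}\delta_{g})$ yields $N_{f}=N_{g}$. The main obstacle I anticipate is the quantitative extraction of the margin $\delta$: one needs to verify that elements of $\bsP(f^{N})$ are genuinely open (this follows inductively from the piecewise continuity of $f$ on each open $P_i$) so that compact subsets of any element sit at positive distance from its complement. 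The remainder is a clean transport of the stabilisation inequality through $\psi$, together with a symmetric re-application.
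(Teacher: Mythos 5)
Your proof follows essentially the same route as the paper's: use the $\sigma$-weighting in $d_1$ to control $\rho(f^{N},g^{N})$, extract a close-to-identity homeomorphism $\psi\in H(\bsP(f^{N}),\bsP(g^{N}))$, use the positive margin $\ve=\min_{P}d_0(\overline{f^{N}(P)},\partial Q_{P})$ afforded by the Markov property of $f$, and transport the stabilisation inclusion through $\psi$ with a threshold of order $\sigma^{N}\ve$. The estimate (two $\delta$'s against a $3\delta$ margin) is the one the paper uses.

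One step is stated a bit too quickly: ``given $y\in\overline{g^{N}(P')}$, pick $x\in\overline P$ with $y=g^{N}(\psi(x))$; then $|y-f^{N}(x)|<\delta$.'' For $x\in\partial P$ the map values $f^{N}(x)$ and $g^{N}(\psi(x))$ appearing in $\|f^{N}-g^{N}\circ\psi\|_{\cC^{0}}<\delta$ are the actual (generally discontinuous) values of the maps at boundary points, while your $y$ is, implicitly, a value of the continuous extension of $g^{N}|_{P'}$ to $\overline{P'}$. These need not agree, so the bound does not directly apply at boundary points of $P$. The paper avoids this by fixing a sequence $x_k\in P$ with $g^{N}(\psi(x_k))\to y$, carrying out the estimate at interior points, and passing to the limit only at the end; your argument needs this small adjustment. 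Finally, on the equal-stabilisation claim, your symmetric re-application is the right idea and is actually more explicit than the paper, which only verifies that $g$ stabilises at time $N_f$ (so $N_g\le N_f$) before asserting equality; note, as you do, that the resulting smallness threshold then depends on the pair $(f,g)$ rather than on $f$ alone.
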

 To be able to state our stability result, we need to discuss the dynamics of the partition elements.
	 \begin{defin}[Wandering Set]
		For a piecewise contraction \(f:X\to X\), a partition element \(P\in\bsP(f)\) is called Wandering if there exists \(M\in\bN\) such that \(f^n(P)\cap P=\emptyset\) for all \(n>M\). The set \(\bW(f)\subset \bsP(f)\), consisting of all wandering partition elements, is called the \emph{wandering} elements set. We set $W(f)=\cup_{P\in\bW(f)}P$.\footnote{ Note that this set is not the usual wandering set, which is much bigger.}
   \end{defin}
		
   \begin{defin}[Non-wandering Set]
  The complement \(N\bW(f)\) of the Wandering elements set is called the non-wandering elements set, that is, \(N\bW(f)= \{P\in \bsP(f)\::\; P\not \in\bW(f)\}\). We set $NW(f)=\cup_{P\in N\bW(f)}P$.
   \end{defin}
 Note that, for a Markov map, the set $N\bW(f)$ correpsonds to the non-wandering set of the dynamical system $(\Omega(f),\bbf)$ defined in Remark \ref{rem:symbolic_dyn}. Accordingly, our definition of $NW(f)$ should not be confused with the usual non-wandering set of $f$, which, for a Markov map, consists of finitely many points (the set of periodic points, see Theorem \ref{thm:eq}). Hereby we state our definition of topological stability:
\begin{defin}[Topological Stability]{\label{def:ss}}
	 Let \(\mathfrak{C}\) be contained in the set of piecewise contractions from $X\to X$, and \(d \) be a metric defined on the piecewise contractions. We say that \((\mathfrak{C},d)\) is \emph{topologically stable}  if for every \(f\in\mathfrak{C}\), there exists a \(\delta>0\) such that, for any piecewise contraction \(g\) with \(d(f,g)<\delta\), \(g\in\mathfrak{C}\), moreover \(f\) is semi-conjugate to \(g\) and \(g\) is semi-conjugate to \(f\) on the non-wandering sets, that is, there exist continuous functions \(H_1: {NW}(f)\to{NW}(g),\  H_2:{NW}(g)\to{NW}(f)\)  such that \(H_1\circ f= g\circ H_1\), \(f\circ H_2= H_2 \circ g\), and $H_1(NW(f))=NW(g)$, $H_2(NW(g))=NW(f)$.
\end{defin}
	 This definition of topological stability is somewhat different from the standard definition (see \cite[Defintion 2.3.5]{KH95}). In general, topological stability is defined for homeomorphisms  such that one of them is semi-conjugate to the other (whereas we have semi-conjugacies for both sides), but on the entire space. Here it is necessary to define it only on the non-wandering set which, in fact, is the only subset of the space that contributes to the long-time dynamics. We can interpret our definition as the one stating that the long-time dynamics of two such functions are qualitatively the same. 
		\begin{thm} \label{thm:ss} 
			The set of Markov piecewise injective contractions (as defined in Definitions \ref{def:markov}, \ref{def:pic}) is topologically stable in the \(d_1\) topology.
		\end{thm}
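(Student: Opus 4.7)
The plan is to combine the openness result from Theorem~\ref{thm:op} with the classical theory of topological conjugacy between contracting maps. Let $\delta > 0$ be small. By Theorem~\ref{thm:op}, any piecewise contraction $g$ with $d_1(f, g) < \delta$ is Markov with the same stabilisation time $N$ as $f$; moreover, the definition of $d_1$ supplies a homeomorphism $\psi \in H(\bsP(f^N), \bsP(g^N))$ close to the identity with $\|f^N - g^N \circ \psi\|_{\cC^0}$ small. Writing $\bsP(f^N) = \{Q_i\}_{i=1}^\ell$ and $\bsP(g^N) = \{Q_i'\}_{i=1}^\ell$ with $\psi(Q_i) = Q_{\sigma(i)}'$ for some bijection $\sigma$, the induced Markov maps $\bbf, \bbg$ of Remark~\ref{rem:symbolic_dyn} are conjugated by $\sigma$ (for $\delta$ small enough), so periodic $\bbf$-cycles correspond to periodic $\bbg$-cycles bijectively, and in particular the non-wandering sets $N\bW(f)$ and $N\bW(g)$ are identified via $\sigma$. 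A separate short perturbation argument, using the injective extensions $\tilde f_i$ of $f$ together with the $\cC^0$-smallness of $f - g \circ \psi$ restricted to partition elements, promotes $g$'s pieces to injective contractions on neighborhoods, so $g$ lies in the class.

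For each periodic $\bbf$-cycle $C$ of length $k$ through $Q_{i_0}$, $f^k$ sends $Q_{i_0}$ into itself and, being a $\lambda^k$-contraction, has a unique fixed point $p_C \in \overline{Q_{i_0}}$; Theorem~\ref{thm:eq} places $p_C$ in $\mathring{Q}_{i_0}$, yielding a periodic orbit $O_C(f)$ of period $k$, and similarly $O_C(g) \subset Q'_{\sigma(i_0)}$ for $g$. To build $H_1: NW(f) \to NW(g)$, I proceed cycle by cycle. Both $f^k|_{Q_{i_0}}$ and $g^k|_{Q'_{\sigma(i_0)}}$ are contractions whose sufficiently high iterates map the sets strictly into their interiors (by stabilisation), with fixed points $p_C, p'_C$. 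A classical fundamental-domain construction produces a homeomorphism $h_C: Q_{i_0} \to Q'_{\sigma(i_0)}$ with $h_C \circ f^k = g^k \circ h_C$ and $h_C(p_C) = p'_C$: choose any homeomorphism of $Q_{i_0} \setminus f^k(Q_{i_0})$ onto $Q'_{\sigma(i_0)} \setminus g^k(Q'_{\sigma(i_0)})$ agreeing with the required identification on the inner boundary, extend by forward iteration $h_C(f^{km}(y)) := g^{km}(h_C(y))$, and extend by continuity at the fixed point. I then extend $h_C$ across the full cycle by setting $H_1(f^j(x)) := g^j(h_C(x))$ for $0 \leq j < k$ and $x \in Q_{i_0}$, using piecewise injectivity to make $f^j$ a homeomorphism onto its image; residual portions of $Q_{i_j}$ not covered by $f^j(Q_{i_0})$ are treated by the same fundamental-domain scheme applied to the restriction of the relevant iterate. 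Assembled over all cycles, this yields a continuous map $H_1: NW(f) \to NW(g)$ with $H_1 \circ f = g \circ H_1$ and $H_1(NW(f)) = NW(g)$. The map $H_2: NW(g) \to NW(f)$ is built symmetrically by exchanging the roles of $f$ and $g$.

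The main obstacle is the global gluing in the construction of $H_1$: patching the local conjugacies $h_C$ across partition boundaries and across different cycles into a continuous map on all of $NW(f)$ that is simultaneously dynamically intertwining and surjective onto $NW(g)$. Piecewise injectivity is essential here, since it allows inverting $f^j$ on each $Q_{i_0}$, making the extension formula $H_1 \circ f^j = g^j \circ h_C$ well-defined and continuous. The fundamental-domain conjugacy of a single contraction is classical, but the dynamical consistency of the extension across boundaries of partition elements (together with surjectivity onto $NW(g)$) must be verified with care, using the $\psi$-induced matching of boundaries provided by $d_1$-closeness.
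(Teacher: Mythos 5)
Your overall strategy — identify periodic cycles via $\psi$, build a conjugacy on a fundamental domain for each cycle, then extend by forward iteration and glue — is the same general plan the paper follows. But there are two concrete gaps in the execution, and the second one is precisely the issue you flag as an ``obstacle'' without resolving.

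First, the choice of fundamental domain. You take $Q_{i_0}\setminus f^k(Q_{i_0})$ for a single partition element under the return map $f^k$, then push forward by $f^j$ and wave at the ``residual portions'' of $Q_{i_j}$ not in $f^j(Q_{i_0})$. Those residuals are genuinely there (stabilisation gives $\overline{f^N(P)}\subset Q$, not equality), they form a decreasing chain as one goes around the cycle, and the recipe ``apply the same fundamental-domain scheme to the relevant iterate'' does not automatically produce something continuous where the pieces meet. The paper sidesteps this entirely by taking the union $\widehat P=\bigcup_{i}\overline{P_i}$ over the whole cycle and using the single-step fundamental domain $\widehat P\setminus f(\widehat P)$, with the inductive extension $h_i = g\circ h_{i-1}\circ f^{-1}$. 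That choice covers every point of every $Q_{i_j}$ without any residual bookkeeping.

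Second, and more importantly, ``choose any homeomorphism of the fundamental domain agreeing with the required identification on the inner boundary'' leaves the outer boundary free, and that freedom breaks the gluing across cycles that you correctly identify as the crux. The paper's construction is not an arbitrary choice: it sets $\widehat g=\psi\circ f\circ\psi^{-1}$, interpolates $\tilde g=\theta g+(1-\theta)\widehat g$ via a Urysohn function $\theta$ supported between neighbourhoods of $f(\widehat P)$ and $\widehat P^{\complement}$, and defines $h_0=\tilde g\circ\psi\circ f^{-1}$. This forces $h_0|_{\partial\widehat P}=\psi$ on the \emph{outer} boundary and $h_0|_{\partial\widehat P_1}=g\circ\psi\circ f^{-1}$ on the inner one, with surjectivity supplied by a Brouwer fixed-point argument. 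The point of pinning $h_0$ to $\psi$ on $\partial\widehat P$ is exactly to make the cycle-by-cycle pieces $H_{\widehat P^k}$ agree on shared boundary manifolds — $\Theta|_{\partial\widehat P^k}=\psi|_{\partial\widehat P^k}$ for every $k$ is what makes the pasted map continuous on $NW(f)$. With an unconstrained choice of homeomorphism, you cannot guarantee that consistency, so the final assembly step is not merely ``to be verified with care'' — as stated, it does not go through.

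So: right philosophy, but the Urysohn interpolation that anchors the fundamental-domain conjugacy to $\psi$ on the outer boundary is the missing ingredient, and the $\widehat P$-level (rather than $Q_{i_0}$-level) framing is what makes the residuals disappear.
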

		The proof of the above theorem is given in Section~\ref{sec:op}. To state our result on density, we restrict ourselves to piecewise smooth contractions, defined as follows:
\begin{defin}[Piecewise smooth contraction]\label{def:psc}
	 A piecewise injective contraction \(f\) with partition \(\bsP(f)=\{P_i\}, i\in\{1,2,\ldots,m\}\) and injective extensions \(\tilde f_i:U_i\to\bR^d\), is called a \emph{piecewise smooth contraction} if, for every \(i\in\{1,2,\ldots,m\}\),
\begin{itemize}
	 \item  \(\norm{\tilde f_i}_{\cC^3}<\infty\), 
	 \item \(\norm{\tilde f_i^{-1}}_{\cC^1}<\infty\).
	 \item \(\partial P_i\) is contained in the union of finitely many \(\cC^2\) \((d-1)\)-dimensional manifolds $\{M_j\}$, $\partial M_j\cap \overline P_i=\emptyset$. Such manifolds are pairwise transversal, and the intersection of any set of such manifolds consists of a finite collection of  \(\cC^2\) manifolds.
\end{itemize} 
\end{defin}
      For a piecewise smooth contraction \(f\), we define the extended-metric
	 \[
		d_2(f,g)=\begin{cases} \sup\limits_{P\in\bsP(f)}\norm{f-g}_{\cC^2(P)} &\text{if}\ \bsP(f)=\bsP(g)\\
			\ \ \infty & \text{otherwise}.
		\end{cases}
	 \]
 Note that $d_2(f,g)\geq \rho(f,g)$. The following density result  is proven in Section~\ref{sec:den}:

\begin{thm}\label{thm:den}
 	 Piecewise smooth Markov contractions are $d_2$-dense in the space of piecewise smooth contractions.
	 \end{thm}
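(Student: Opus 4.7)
The plan is to reduce Theorem~\ref{thm:den} to Theorem~\ref{thm:eq}: for any piecewise smooth contraction $f$ and any $\epsilon>0$, I would construct a perturbation $g$ with $d_2(f,g)<\epsilon$ and $\Lambda(g)\cap\Delta(g)=\emptyset$, which by Theorem~\ref{thm:eq} makes $g$ Markov. Since $d_2(f,g)<\infty$ forces $\bsP(g)=\bsP(f)$, the discontinuity set is preserved under the perturbation, so the task becomes: move the attractor off the \emph{fixed} set $\Delta(f)$ by a small $\cC^2$-perturbation preserving the partition. To this end I would introduce the translation family $f_t|_{P_i}(x):=\tilde f_i(x)+t_i$, parametrised by $t=(t_1,\ldots,t_m)\in\bR^{md}$; for small $t$ each $f_t$ is a piecewise smooth contraction with the same partition as $f$, and $d_2(f,f_t)\leq \max_i|t_i|$. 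It then suffices to find arbitrarily small $t$ with $\overline{f_t^N(X)}\cap\Delta(f)=\emptyset$ for some $N\in\bN$.

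For each admissible itinerary $\omega=(i_0,\ldots,i_{N-1})$ of length $N$, set $B_\omega(t):=\overline{f_t^N(P_\omega^N)}$; this ``blob'' has diameter at most $\lambda^N\operatorname{diam}(X)$. The key observation is that the last translation acts rigidly on the image, $B_\omega(t+se_{i_{N-1}})=B_\omega(t)+s$, while earlier coordinates $t_{i_k}$ displace $B_\omega$ only by $O(\lambda^{N-1-k})$. Since $\Delta(f)$ lies in a finite union of $\cC^2$ codimension-one submanifolds of bounded $(d-1)$-Hausdorff measure, for each blob the ``forbidden'' translation set $\{s\in\bR^d:(B_\omega(t)+s)\cap\Delta(f)\neq\emptyset\}$ has Lebesgue measure $O(\lambda^N)$. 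Grouping blobs by terminal index $i$ and choosing each $t_i$ outside the (finite) union of the corresponding forbidden sets would place every blob ending at $i$ in the interior of a single partition element, giving $\overline{f_t^N(X)}\cap\Delta(f)=\emptyset$, and hence Markovness.

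The principal obstacle is controlling the number of admissible itineraries $\kappa_i(N)$: the union bound above gives total forbidden measure $O(\kappa_i(N)\lambda^N)$, which must remain below the volume of the permitted parameter ball. In regimes with polynomial complexity (as in \cite{Pires19,CGM}) this closes the argument immediately, but general piecewise smooth contractions can have exponential complexity (cf.\ \cite{KR}), so in full generality I would fall back on a successive-perturbation scheme: a small initial $t^{(1)}$ moves at least one currently-bad blob into the interior of a partition element; openness of the Markov property (Theorem~\ref{thm:op}) ensures the fix persists under further sufficiently small perturbations; iterating with geometrically decreasing increments deals with any remaining bad blobs without exceeding total displacement $\epsilon$. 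Verifying that each correction does not re-create a previously eliminated bad intersection — using the uniform $\cC^2$-bound on the extensions $\tilde f_i$ from Definition~\ref{def:psc} — is the most delicate technical point.
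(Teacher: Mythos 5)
Your translation-family argument is essentially the paper's Proposition~\ref{prop:markov-dense}: the paper uses a single global translation $f^\delta=f+\delta$ and a quantitative Sard-type estimate, trapping the image $\theta(A_k(\omega^*))$ of a positive-measure parameter set in an $O(\lambda^{pk})$-thin neighbourhood of a boundary manifold while its Lebesgue measure is bounded below by $C\ve^d m_p^{-k}$. That lower bound only beats the upper bound when $m_p\lambda^p<1/2$ — the \emph{piecewise strong contraction} condition. You have correctly identified this as the obstruction: if the number of admissible itineraries grows faster than $\lambda^{-N}$, the union bound over forbidden translation sets fails, and there is no way to salvage the measure-counting argument without first reducing the complexity.

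The gap is that your fallback does not supply that reduction. The successive-perturbation scheme you sketch (fix one bad blob, appeal to openness, iterate with shrinking increments) is not a proof: (i) at each fixed depth $N$ there may be exponentially many bad blobs, and fixing one can unfix as many others, so there is no monotone progress measure and no reason the process terminates; (ii) as $N$ increases the collection of relevant blobs is not fixed but grows with the iterate, so ``deal with any remaining bad blobs'' is not a finite task; (iii) the openness you invoke (Theorem~\ref{thm:op}) only helps after the map is already Markov, which is the conclusion, not something you have at an intermediate stage. The paper handles this by an entirely separate argument (Proposition~\ref{prop:psc-dense}, built on Sections~\ref{sec:fixed} and~\ref{sec:manifold}): first it puts fixed points of the IFS compositions in generic position (Propositions~\ref{prop:fix-sep},~\ref{prop:bad_points}) so that compactly supported local perturbations of the $\phi_i$ do not interfere with themselves along orbits; then it applies a transversality theorem to the map $x\mapsto(\phi_{\sigma_1}(x),\dots,\phi_{\sigma_n}(x))$ against products of boundary manifolds (Lemma~\ref{lem:intersection}) to force the pre-images of the boundary manifolds to intersect in strictly smaller dimension each time, yielding the uniform bound of $2^{dm^{d-1}l_1}$ partition elements meeting at any point and hence polynomial (in fact eventually subexponential relative to $\lambda^{-N}$) complexity. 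Your proposal contains no analogue of the fixed-point control or the transversality induction, which are precisely where the real difficulty lies; the part you \emph{do} prove only reaches the easy half of the theorem.
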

  
\begin{rem}	 
	 Theorem~\ref{thm:eq}, Theorem~\ref{thm:op}, Theorem~\ref{thm:ss} show that, for a piecewise contraction, to be Markov is stable under a rather weak topology (\(d_1\)). Instead, Theorem~\ref{thm:den} shows that Markov is dense under a quite strong topology (\(d_2\)). These theorems collectively show that, for a piecewise contraction, being Markov is generic; hence, to have a Cantor set as an attractor is rare. 
\end{rem}
	 As already mentioned, a result on density is present in literature, see \cite{CGMU}. However, it is proved under a much coarser metric as compared to \(d_2\). More importantly, it assumes that the maps have the separation property, which implies they are globally injective, whereas we assume only piecewise injectivity.

\section{Openness and Topological stability} \label{sec:op}
     Recall that, for any piecewise contraction \(f\),  \(\bsP(f)\) stands for the maximal partition, so \(\partial\bsP(f)=\Delta(f)\). In addition, for any \(n>1\), the elements of partition \(\bsP(f^n)\) are given by the equation~(\ref{eq:n-partn}), and \(\#\bsP(f^n)=m_n\), \(m_1=m\).
\begin{proof}[\bf\em{Proof of Theorem~\ref{thm:eq}}] 
     Let \(f\) satisfy \(\Lambda(f)\cap\Delta(f)=\emptyset\). For all \(n\in\bN\), \(\overline{f^{n+1}(X)}\subset \overline{f^{n}(f(X))}\subset \overline{f^n(X)}\) implies that \(\{\overline{f^n(X)}\}\) is a nested sequence of non-empty compact sets. Further, Cantor's intersection theorem implies that \(\Lambda(f)=\cap_{n\in \bN} \overline{f^n(X)}\neq \emptyset\) and it is closed.  Accordingly, there exists \(\ve>0\), such that \(d_0(\Lambda(f),\Delta(f))>\ve\). We claim that, for any $\ve>0$,  there exists \(N_{\ve}\in \bN\)  such that for \(n\geq N_{\ve}\), \(\overline{f^n(X)}\subset B_{\ve/2}(\Lambda(f))\).\footnote{For \(r>0\) and a set \(A\), \( B_r(A)=\{y\;:\; \exists\  x\in A \textrm{ such that }d_0(x,y)<r \)\}.} Indeed, if this was not the case, then there would exist a sequence $\{n_j\}$, $n_j\to\infty$, such that \(\overline{f^{n_j}(X)}\cap B_{\ve/2}(\Lambda(f))^c\neq \emptyset\). It follows that for each $n\in \bN$ there exists $j$ such that $n_j\geq n$, hence 
\[
\overline{f^{n}(X)}\cap B_{\ve/2}(\Lambda(f))^c\supset  \overline{f^{n_j}(X)}\cap B_{\ve/2}(\Lambda(f))^c\neq \emptyset,
\] 
which, taking the intersecion on $n$, yileds a contradiction.
Consequently, for every \(P\in\bsP(f^{N_\ve})\), there exists \(Q \in \bsP(f^{N_{\ve}})\) such that \(\overline{f^{N_\ve}(P)}\subset Q\), otherwise there would exist \(x\in f^{N_\ve}(P)\cap \partial\bsP(f^{N_{\ve}})\), that is, a \(k\in\bN\) such that 
	     \[
		 f^{k}(x)\in f^{k+N_{\ve}}(P)\cap \partial \bsP(f)\subset \overline{f^{k+N_\ve}(X)}\cap\Delta(f)\subset B_{\ve/2}(\Lambda(f))\cap\Delta(f)=\emptyset,
		 \]
	 which is a contradiction.\\
     Conversely, let \(f\) be a piecewise Markov contraction with stabilisation time \(N\in\bN\). By definition, for every \(P\in\bsP(f^N)\), there exists \(Q\in\bsP(f^N)\) such that \(\overline{f^N(P)}\subset Q\), that is \(\overline{f^N(P)}\cap\partial \bsP(f^N)=\emptyset\). Note that  \(\overline{f(X)}=\bigcup_{i=1}^m\overline{f(\tilde P_i)}\subset\bigcup_{i=1}^m \overline{f(P_i)} \), where $\tilde P_i$ is as given in Definition~\ref{def:pc}. Thus,
	     \[
	     \Lambda(f)=\bigcap_{n\in\bN}\overline{f^n(X)}\subset \bigcap_{n\in\bN}\bigcup_{P\in\bsP(f^n)}\overline{f^n(P)}\subset \bigcup_{P\in\bsP(f^N)}\overline{f^N(P)}\subset \bigcup_{Q\in \bsP(f^N)}Q
		 \]
	 implies \(\Lambda(f)\cap\partial\bsP(f)=\emptyset\).\\ 
     To prove the second part of the Theorem, let \(x\in \Lambda(f)\) and $N$ be the stabilisation time. By definition, for each $k\in\bN$, there exists \(y_k\in Q_{k}\in \bsP(f^N)\) such that \(x=f^{kN}(y_k)\). In addition, there exists \(P\in\bsP(f^N)\) such that \(x\in P\). This implies \({\overline{f^{kN}(Q_k)}}\subset P\) for all \(k\in\bN\). Let \(l:=\# \bsP(f^N)\),\footnote{ For a discrete set \(M,\  \#  M\) denotes the cardinality of \(M\).} then there exists $k_1\in\{0,\dots, l\}$ such that \(P=Q_{k_1}\) hence $\overline{f^{k_1N}(P)}\subset P$.
	 By Contraction Mapping Theorem \(f^{k_1N}:\overline{P}\to \overline{P}\) has a unique fixed point say \(z\in \overline{P}\). Let $j\in\bN$ be the smallest integer for which $f^j(z)=z$, then $x\in  \bigcap_{n\in\bN}\overline{ f^{nj}(P)}=\{z\}$.  Hence, \(\Lambda(f)\) consists of periodic orbits. 
\end{proof}
To prove the result on openness and topological stability, we first prove that the functions \(\rho,\ d_1\), defined in Section~\ref{sec:results}, are in fact metrics on the set of piecewise contractions. Note that, by definition of \(H(\bsP(f),\bsP(g))\), if \(\# \bsP(f)\neq \# \bsP(g)\) then \(H(\bsP(f),\bsP(g))=\emptyset\).
\begin{lem} {\label{lem:rho}}
	 \(\rho\) is a metric. 
\end{lem}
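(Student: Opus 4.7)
The plan is to check the three non-trivial metric axioms separately, treating the case where one of the sets \(H(\bsP(\cdot),\bsP(\cdot))\) is empty as a trivial sub-case covered by the bound \(\rho(\cdot,\cdot)\leq A\). Non-negativity is immediate.

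\textbf{Symmetry.} If \(H(\bsP(f),\bsP(g))=\emptyset\) then \(H(\bsP(g),\bsP(f))=\emptyset\) as well (the map $\psi\mapsto \psi^{-1}$ gives a bijection between the two sets since every $\psi$ is assumed to be a homeomorphism sending partition elements bijectively to partition elements), and both distances equal \(A\). Otherwise, for \(\psi\in H(\bsP(f),\bsP(g))\) I would change variables \(y=\psi(x)\) to obtain the two identities
\[
\|\psi^{-1}-id\|_{\cC^0}=\|\psi-id\|_{\cC^0}, \qquad \|g-f\circ\psi^{-1}\|_{\cC^0}=\|f-g\circ\psi\|_{\cC^0},
\]
and then take the infimum, noting that \(\psi\mapsto\psi^{-1}\) is a bijection \(H(\bsP(f),\bsP(g))\to H(\bsP(g),\bsP(f))\).

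\textbf{Triangle inequality.} If either of $H(\bsP(f),\bsP(g))$, $H(\bsP(g),\bsP(h))$ is empty, the inequality reduces to \(\rho(f,h)\leq A\), which holds. Otherwise, given \(\psi_1\in H(\bsP(f),\bsP(g))\) and \(\psi_2\in H(\bsP(g),\bsP(h))\), the composition \(\psi_2\circ\psi_1\) belongs to \(H(\bsP(f),\bsP(h))\). The two required estimates are straightforward: \(\|\psi_2\circ\psi_1-id\|_{\cC^0}\leq \|\psi_2-id\|_{\cC^0}+\|\psi_1-id\|_{\cC^0}\) (triangle inequality followed by $\sup_x|\psi_2(\psi_1(x))-\psi_1(x)|\le\sup_y|\psi_2(y)-y|$), and, again by a change of variables $y=\psi_1(x)$ together with the bijectivity of $\psi_1$,
\[
\|f-h\circ\psi_2\circ\psi_1\|_{\cC^0}\leq \|f-g\circ\psi_1\|_{\cC^0}+\|g-h\circ\psi_2\|_{\cC^0}.
\]
Summing and taking separate infima over \(\psi_1,\psi_2\) yields \(\rho(f,h)\leq \rho(f,g)+\rho(g,h)\).

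\textbf{Identity of indiscernibles.} This is the main delicate point, and I would treat it as the principal obstacle. Clearly \(\rho(f,f)=0\) via \(\psi=id\). Conversely, suppose \(\rho(f,g)=0\); then there is a sequence \(\psi_n\in H(\bsP(f),\bsP(g))\) with \(\|\psi_n-id\|_{\cC^0}\to 0\) and \(\|f-g\circ\psi_n\|_{\cC^0}\to 0\). Since each partition has finitely many elements and the induced bijection \(\bsP(f)\to\bsP(g)\) takes values in this finite set, after passing to a subsequence we may assume the correspondence \(P\mapsto Q_P\) is constant in \(n\). Uniform convergence \(\psi_n\to id\) forces \(\overline{Q_P}=\overline{P}\) for each \(P\in\bsP(f)\), and since both sets are the closures of their interiors we get \(Q_P=P\), hence \(\bsP(f)=\bsP(g)\). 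Then for any \(x\in P\in\bsP(f)\) one has \(\psi_n(x)\to x\) with \(\psi_n(x)\in P\) eventually, and continuity of \(g\) on \(\tilde P\) together with \(g(\psi_n(x))\to f(x)\) gives \(f(x)=g(x)\) on the interior of each partition element; the boundary values coincide by the continuous-extension condition in Definition~\ref{def:pc}. Thus $f=g$ as tuples \((f,\bsP(f))\), which is the sense in which \(\rho\) identifies piecewise contractions.
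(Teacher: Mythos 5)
Your proof is correct and follows essentially the same route as the paper's, with two cosmetic differences and one slip worth flagging. For the triangle inequality you compose $\psi_2\circ\psi_1$ where the paper uses $\phi^{-1}\circ\varphi$ — the same idea up to the bijection $\phi\mapsto\phi^{-1}$ — and your case analysis for empty $H$-sets is a bit cleaner because, bounding $\rho(f,h)$ from above, emptiness of either intermediate set immediately gives $\rho(f,h)\leq A\leq$ right-hand side, whereas the paper, bounding $\rho(f,g)$, has to split into three sub-cases. In the identity of indiscernibles you fill in steps the paper glosses over (it merely asserts that $\psi_n\to id$ forces $\bsP(f)=\bsP(g)$, and that $f-g\circ\psi_n\to f-g$, which is delicate at boundary points since $g$ is discontinuous). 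Your more detailed argument, however, exposes a gap present in both treatments: to pass from $\overline{Q_P}=\overline{P}$ to $Q_P=P$ you need each partition element to be \emph{regular open}, i.e.\ equal to the \emph{interior of its closure}. Your phrase ``closures of their interiors'' is not the property you need (it is trivially true of $\overline{P}$ and $\overline{Q_P}$ and yields nothing). Regular openness does hold for the maximal partition of a piecewise contraction — any $x\in\mathring{\overline{P_i}}\setminus P_i$ has a neighbourhood contained in $\overline{P_i}$ on which $f$, being a uniformly continuous contraction on the dense open set $P_i$, extends continuously, so $x\notin\Delta(f)=\partial\bsP(f)$, a contradiction — but it is not part of Definition~\ref{def:pc}, and neither you nor the paper records it explicitly.
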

\begin{proof}
Let \(f,g,h\) be piecewise contractions.\\
If \(\rho(f,g)=0\) then there exists a sequence \(\psi_n\in H(\bsP(f),\bsP(g))\), \(\norm{\psi_n-id}_{\cC^0(X,X)} \to 0\), and \(\norm{f-g\circ\psi_n}_{\cC^0(X,X)}\to 0\) as \(n\to \infty\) which implies \(\psi_n\to id\)  as \(n\to\infty\) which further implies that \(\bsP(f)=\bsP(g)\). Also, \(\psi_n\to id\) means that \(f-g\circ \psi_n\to f-g\), hence \(f=g\).\\
Next, we check the symmetry of \(\rho\):
 if \(H(\bsP(f),\bsP(g))=\emptyset\) then \(\rho(f,g)=\rho(g,f)=A\). If \(H(\bsP(f),\bsP(g))\neq\emptyset\) then
\begin{align*}
     \rho(f,g)&=\inf\limits_{\psi\in H(\bsP(f),\bsP(g))} \{\norm{\psi-id }_{\cC^0(X,X)}+\norm{f-g\circ\psi}_{\cC^0(X,X)}\}\\
     & = \inf\limits_{\psi^{-1}\in H(\bsP(g),\bsP(f))} \{\norm{\psi^{-1}-id}_{\cC^0(X,X)}+\norm{f\circ\psi^{-1}-g}_{\cC^0(X,X)}\}\\
     &=\rho(g,f).
\end{align*}
It remains to check the triangle inequality:\\
     To show that $\rho(f,g)\leq \rho(f,h)+\rho(g,h)$, consider the following cases:\\
	 if \(H(\bsP(f),\bsP(g))=\emptyset\) then \(\rho(f,g)=A\), and
	  \(H(\bsP(f),\bsP(h))=\emptyset\) or/and \(H(\bsP(g),\bsP(h))=\emptyset\), that is, either one of the two or both are empty sets. Therefore, \(\rho(f,h)=A\) or \(\rho(g,h)=A\) and so
		 \(\rho(f,g)=A \leq \rho(f,h)+\rho(h,g)\).\\
 If \(H(\bsP(f),\bsP(g))\neq\emptyset\) then there are the following two possibilities:
\begin{enumerate}
	 \item 
	 \(H(\bsP(f),\bsP(h))=\emptyset\) and \(H(\bsP(h), \bsP(g))=\emptyset\) then\\
	 \(\rho(f,h)+\rho(g,h)\geq A\) and 
\begin{align*} 
	 \hspace{1.1cm}\rho(f,g)& =\inf\limits_{\psi\in H(P(f),P(g))} \{\norm{\psi-id}_{\cC^0(X,X)}+\norm{f-g\circ\psi}_{\cC^0(X,X)}\}\\
	 &\leq2\ \operatorname{diam}(X)
\end{align*} 
	 Since \(A\geq2\ \operatorname{diam}(X)\), we have the result.
	 \item  \(H(\bsP(f),\bsP(g))\neq\emptyset\), and \(H(\bsP(g),\bsP(h))\neq\emptyset\).\\
	  Given \(\phi\in H(\bsP(g),\bsP(h))\) and \(\varphi\in H(\bsP(f),\bsP(h))\), the homeomorphism \(\psi=\phi^{-1}\circ\varphi \in H(\bsP(f), \bsP(g))\), and hence
	 \begin{align*}
	 \rho(f,g)&=\inf\limits_{\psi\in H(\bsP(f), \bsP(g))}\{\norm{\psi-id}_{\cC^0(X,X)}+ \norm{f-g\circ\psi}_{\cC^0(X,X)}\}\\
	 &\leq \inf\limits_{\varphi\in H(\bsP(f),\bsP(h))}\inf\limits_{\phi\in H(\bsP(g),\bsP(h))} \{\norm{\phi^{-1}\circ\varphi-id}_{\cC^0(X,X)}+\norm{f-h\circ\varphi}_{\cC^0(X,X)}\\
	 &\ \ \ +\norm{h\circ\varphi-g\circ\phi^{-1}\circ\varphi}_{\cC^0(X,X)}\}  \\
	 &\leq \inf\limits_{\varphi\in H(\bsP(f),\bsP(h))}\inf\limits_{\phi\in H(\bsP(g),\bsP(h))} \{\norm{\phi^{-1}-id}_{\cC^0(X,X)}+\norm{f-h\circ\varphi}_{\cC^0(X,X)}\\
	 &\ \ \ +\norm{\vf-id}_{\cC^0(X,X)}+\norm{h\circ\varphi-g\circ\phi^{-1}\circ\varphi}_{\cC^0(X,X)}\}  \\
	 &=\inf\limits_{\phi\in H(\bsP(g),\bsP(h))} \{\norm{\phi^{-1}-id}_{\cC^0(X,X)}+\norm{h\circ\varphi-g\circ\phi^{-1}\circ\varphi}_{\cC^0(X,X)}\}\\
	 &\ \ \ + \inf\limits_{\varphi\in H(\bsP(f),\bsP(h))}\{\norm{\vf-id}_{\cC^0(X,X)}+\norm{f-h\circ\varphi}_{\cC^0(X,X)}\}\\
	 &=\rho(f,h)+\rho(g,h)
\end{align*}
\end{enumerate}
     Hence \(\rho(\cdot,\cdot)\) is a metric.	
\end{proof}
     Lemma~\ref{lem:rho} implies that \(d_1\) is also a metric since the series is convergent.

\begin{proof} [\bf\em Proof of Theorem~\ref{thm:op}]
	 Let \(f \) be Markov, we want to prove that there exists a neighbourhood of \(f\) consisting of only Markov contractions. Since \(f\) is Markov, there exists \(N\in\bN\) such that the maximal partition \(\bsP(f)\) of \(f\) stabilises with stabilisation time \(N\). Let \(g\neq f\) be a piecewise contraction such that \(d_1(f,g)<\delta\) for\footnote{Recall that \(A=\operatorname{diam}(X)\).}  \(0<\delta< \sigma^N A\).\\
 We will show that the partition of \(g\) stabilises with the same stabilisation time \(N\).\\
Note that, for all \(n\leq N\), \(\rho(f^n,g^n)<A\) which implies \(H(\bsP(f^n),\bsP(g^n))\neq\emptyset\). 
	 Let \(P\in\bsP(g^N)\) then there exist \(\psi_N\in H(\bsP(f^N),\bsP(g^N))\) and \(P^\prime\in\bsP(f^N)\) such that \(\psi_N(P^\prime)=P\). By stabilisation, for \(P^{\prime}\), there exists a unique \(Q^\prime\in\bsP(f^N)\) such that \(\overline{f^N(P^\prime)}\subset Q^\prime\), also, \(\psi_N(Q^\prime)=Q\) for some \(Q\in\bsP(g^N)\). Now, \(\overline{f^N(P^\prime)}\subset Q^\prime\) and \(Q^\prime\) being open implies  \(\ve=\min_{P^\prime\in\bsP(f^N)}d_0(\overline{f^N(P^\prime)}, \partial Q^\prime)>0\).
Choosing \(0<\delta<\ve\sigma^N/3\), we claim that \(\overline{g^N(P)}\subset Q\). Indeed, if there exists \(x\in \overline{g^N(P)}\cap \partial Q\), then there exists a sequence \(\{x_k\}\in g^N(P)\cap Q\) such that \(x_k\to x\) as \(k \to \infty\). Let \(y_k\in P\) such that \(g^N(y_k)=x_k\), note that \(\psi_N^{-1}(x)\in \partial Q^\prime\). Now 
\begin{align*}
	 \ve&<d_0(f^N(\psi_N^{-1}(y_k)),\psi_N^{-1}(x))\\
	 &\leq d_0(f^N(\psi_N^{-1}(y_k)),g^N(y_k))+d_0(g^N(y_k),x)+d_0(x,\psi_N^{-1}(x))\\
	 &\leq \norm{f^N\circ \psi_N^{-1}-g^N}_{\cC^0(X,X)}+\norm{id-\psi_N^{-1}}_{\cC^0(X,X)}+ d_0(x_k,x)\\
	 &= \rho(f^N,g^N)+d_0(x_k,x)\quad(\text{taking infimum over}\ \psi_N\in H(\bsP(f^N),\bsP(g^N))\ \text{on both sides})\\
	 &<\delta \sigma^{-N}+ d_0(x_k,x) \to \ve\sigma^N\sigma^{-N}/3 \ \text{as} \ k\to \infty,
\end{align*} 
     we get \(\ve<\ve/3\) which is a contradiction. Hence, if \(f\) is Markov with \(N\) as the stabilisation time, then for
	  \[
		\delta<\min\left\{\sigma^N A, \frac{\sigma^N}{3}\inf_{P,Q\in\bsP(f^N)}d_0(\overline{f^N(P)},\partial Q)\right\},
	 \]
all piecewise contractions \(g\), with \(d_1(f,g)<\delta\), are Markov contractions and the stabilisation time of \(g\) is also \(N\). Thus, the collection of Markov contractions is open.
\end{proof}
 
To prove Theorem~\ref{thm:ss}, we first need to prove the following lemma which in itself brings some important information about the dynamics of a Markov contraction.

\begin{lem}\label{lem:refined_partition}
	Let \(f\) be a Markov contraction with maximal partition \(\bsP(f)\) and stabilisation time \(N\), then for every \(P\in\bsP(f^N)\), there exists \(Q\in\bsP(f^N)\) such that \(f(P)\subset Q\).
\end{lem}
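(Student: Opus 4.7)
The plan is to make explicit the cylinder structure of elements of $\bsP(f^N)$ given by (\ref{eq:n-partn}), apply $f$ to shift the cylinder by one step, and then use the Markov hypothesis to supply the single missing symbol at the end. First, I would write the given element as
\[
P=P^1_{i_0}\cap f^{-1}(P^1_{i_1})\cap\cdots\cap f^{-(N-1)}(P^1_{i_{N-1}}),
\]
so that for any $y=f(x)\in f(P)$ one automatically has $f^k(y)=f^{k+1}(x)\in P^1_{i_{k+1}}$ for $k=0,\dots,N-2$. This immediately yields
\[
f(P)\subset P^1_{i_1}\cap f^{-1}(P^1_{i_2})\cap\cdots\cap f^{-(N-2)}(P^1_{i_{N-1}}),
\]
i.e.\ a cylinder of length $N-1$. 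This is not yet an element of $\bsP(f^N)$, and the task is to identify the missing last coordinate.

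To pin it down, I would invoke stabilisation at time $N$: there exists $Q'\in\bsP(f^N)$ with $\overline{f^N(P)}\subset Q'$, and, writing $Q'=P^1_{j_0}\cap f^{-1}(P^1_{j_1})\cap\cdots\cap f^{-(N-1)}(P^1_{j_{N-1}})$, one has in particular $Q'\subset P^1_{j_0}$. Hence $f^N(x)\in P^1_{j_0}$ for every $x\in P$, equivalently $f^{N-1}(y)\in P^1_{j_0}$ for every $y\in f(P)$. Setting
\[
Q:=P^1_{i_1}\cap f^{-1}(P^1_{i_2})\cap\cdots\cap f^{-(N-2)}(P^1_{i_{N-1}})\cap f^{-(N-1)}(P^1_{j_0}),
\]
one obtains $f(P)\subset Q$, and $Q$ is precisely the element of $\bsP(f^N)$ associated to the multi-index $(i_1,\dots,i_{N-1},j_0)$.

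The only small verification is that $Q$ is genuinely an atom of $\bsP(f^N)$: it is open as a finite intersection of $f^{-k}$-preimages of the open sets $P^1_j$ (on each piece on which $f$ is continuous), and it is non-empty because it contains $f(P)\neq\emptyset$; uniqueness of $Q$, although not asserted by the statement, follows from disjointness of partition elements. I do not expect a serious obstacle here: the Markov property of the $N$-th iterate is exactly the information needed to prescribe the missing last cylinder coordinate, and the rest is a direct unpacking of the notation in (\ref{eq:n-partn}).
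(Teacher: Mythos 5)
Your proof is correct and takes essentially the same approach as the paper: both unpack the cylinder structure of $P\in\bsP(f^N)$ from \eqref{eq:n-partn}, shift the index string by one under $f$, and use the stabilisation property to supply the missing final symbol $j_0$ from the element of $\bsP(f^N)$ containing $f^N(P)$. The small extra remark you add about non-emptiness of $Q$ is sound and slightly more careful than the paper's presentation.
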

\begin{proof}
	By Defintion~\ref{def:stp}, there exists \(P^\prime\in \bsP(f^N)\) such that \(f^N(P)\subset P^\prime\). By the defintion given in Equation~(\ref{eq:n-partn}), there exist parition elements \(\{P_i\}_{i=0}^{N-1}\) (non necessarily distinct) in \(\bsP(f)\) such that
	\[
	 P=	P_0\cap f^{-1}P_1\cap f^{-2}P_2\cap\ldots\cap f^{-(N-1)}P_{N-1}.
	\]
	Similarly there exist partition elements \(\{P_j^\prime\}_{j=1}^{N-1}\) in \(\bsP(f)\) such that 
    \[
	 P^\prime = P_0^\prime \cap f^{-1}P_1^\prime \cap f^{-2}P_2^\prime\cap\ldots\cap f^{-(N-1)}P_{N-1}^\prime.
    \]
	Let \(x\in P\) then \(f^k(f(x))\in P_{k+1}\), for every \(k\in \{0,1,\ldots,N-2\}\) and \(f^N(x)= f^{N-1}f(x)\in P^\prime\) which implies \(f(x)\in f^{-(N-1)}(P^\prime_0)\). Consequently,
	\[
		f(x)\in P_1\cap f^{-1}P_2\cap\ldots\cap f^{-(N-2)}P_{N-1}\cap f^{-(N-1)}P^\prime_0=Q\in \bsP(f^N)
	\] 
	Since \(x\in P\) is arbitrary, \(f(P)\subset Q\in\bsP(f^N)\).
\end{proof}
 
 Finally, to prove Theorem~\ref{thm:ss}, we restrict to piecewise injective contractions and prove the stability result under the metric \(d_1\). Recall that for every piecewise injective contraction \(f\), with maximal partition \(\bsP(f)=\{P_1,\ldots,P_m\}\), there exists \(U_i=\mathring U_i\supset \overline{P_i}\) and an injective continuous extension \(\tilde{f}:U_i\to\bR^d\) such that \(\tilde f|_{P_i}=f|_{P_i}\). In this proof, we always consider these extensions which, to alleviate notation, we still denote by $f$.\\
 Our strategy for the following proof is as follows. For piecewise injective Markov contractions \(f,g\) with maximal partitions \(\bsP(f), \bsP(g)\) respectively and, for some \(\delta>0\), \(d_1(f,g)<\delta\), we construct semi-conjugacies from \(NW(f)\), the non-wandering set for \(f\), to \(NW(g)\), the non-wandering set for \(g\), using a homeomorphism between the partitions given in the definition of the metric \(\rho\).  
\begin{proof}[\bf\em{Proof of Theorem~\ref{thm:ss}}] 
\label{sec:2.14}
     
     Let \(N\in\bN\) be the stabilisation time of \(\bsP(f)\). By Theorem~\ref{thm:op}, there exists \(\delta>0\) such that if  \(d_1(f,g)<\delta\), then \(\bsP(g)\) also stabilises at time \(N\). By Theorem~\ref{thm:eq}, the attractors of \(f\) and \(g\) consist of eventually periodic orbits. Let \(P\in \bsP(f^N)\) be a periodic element of the partition, then there exists \(n_0\in \bN\) such that \(\overline{f^{n_0}(P)}\subset P\). Then \(d_1(f,g)<\delta\) implies that there exists \(Q\in\bsP(g^N)\) such that \(\overline{g^{n_0}(Q)}\subset Q\). By the Contraction Mapping Theorem, for \(f^{n_0}: \overline{P}\to \overline{P}, g^{n_0}: \overline{Q}\to \overline{Q}\), there exist \(x_f\) and \(x_g\), the unique fixed points of \(f^{n_0}\) and \(g^{n_0}\) respectively in \(P\) and \(Q\).\\
	 Using Lemma~\ref{lem:refined_partition} inductively, let \(P_i,\  Q_i\) be the partition elements in \(\bsP(f^N), \bsP(g^N)\) respectively, such that \( f^i(\overline P)\subset P_i \), \( g^i(\overline Q)\subset Q_i\). Let \(\widehat{P}= \cup_{i=1}^{n_0} {\overline P_i}\), \( \widehat{Q}=\cup_{i=1}^{n_0}{\overline Q_i}\), then for each \(i\in \bN\), \(\widehat{P}_i=f^i(\widehat{P})\subset \widehat{P}\), \(\widehat{Q}_i=g^i(\widehat{Q})\subset \widehat{Q} \).\\
     Note that for \(\delta\) small enough, we have \( H(\bsP(f^n),\bsP(g^n))\neq \emptyset\) for every \(n\leq n_0\), that is, there exists \(\psi \in H(\bsP(f^{n_0}),\bsP(g^{n_0}))\) such that \(\norm{\psi-id}_{\cC^0(X,X)}<\delta\) and  \(\norm{f^n-g^n\circ\psi}_{\cC^0(X,X)}<\delta\), and thus  \(\psi(\widehat{P})=\widehat{Q}\).\\
      Next, define $\widehat g=\psi\circ f\circ \psi^{-1}$. Since, by Definition \ref{def:pic}, $f^{-1}$ is well defined on $\widehat P$, we have that $\widehat g$ is invertible on $\widehat Q$ and $\widehat g^{-1}(\partial \widehat Q)=\psi\circ f^{-1}( \partial \widehat P)$.\\
     Next, for $\ve>0$ small enough, let $\widehat P_{1, \ve}$ be the $\ve$-neighborhood of $f(\widehat P)$ and $\widehat P_{c,\ve}$ be the $\ve$-neighborhood of ${\widehat P}^\complement$, the complement of \(\widehat P\). Similarly, let $\widehat Q_{c,\ve}=\psi\circ f^{-1}(\widehat P_{c,\ve})$ and $\widehat Q_{1,\ve}=\psi\circ f^{-1}(\widehat P_{1,\ve})$. By the Markov property, we have $\overline{\widehat P}_{1,\ve}\cap \overline{\widehat P}_{c,\ve}=\emptyset$, and thus \(\overline{\widehat Q}_{1,\ve}\cap \overline{\widehat Q}_{c,\ve}=\emptyset\). 
	 Hence, by Urysohn's lemma there exists a function $\theta\in\cC^0(X,[0,1])$ such that $\theta|_{\overline{\widehat Q}_{1, \ve}}=1$ and $\theta|_{\overline{\widehat Q}_{c,\ve}}=0$. Finally, define the following continuous functions:
     \[
     \begin{split}
     &\tilde g(x)=\theta(x) g(x)+(1-\theta(x))\widehat g(x)\\
     &h_0(x)=\tilde g\circ \psi\circ f^{-1}(x), \quad \forall x\in \overline{\widehat P\setminus\widehat P_1}.
     \end{split}
     \]
     \begin{lem} \label{lem:surj}
     Provided $\delta>0$   small enough, we have $h_0(\widehat P\setminus\widehat P_1)=\widehat Q\setminus\widehat Q_1$, $h_0(\partial \widehat P)=\partial \widehat Q$, $h_0(\partial \widehat P_1)=\partial \widehat Q_1$.
     \end{lem}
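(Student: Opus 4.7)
The plan is to analyse $h_0$ on each of the two boundary components of $\overline{\widehat P\setminus\widehat P_1}$, using the cutoff $\theta$ to reduce $\tilde g$ to either $\widehat g$ or $g$ (so $h_0$ simplifies to an explicit map there), and then to deduce surjectivity onto the open annular region $\widehat Q\setminus\widehat Q_1$ via an invariance-of-domain / topological-degree argument exploiting that $h_0$ is a $\cC^0$-small perturbation of the homeomorphism $\psi$.

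For the outer boundary, $\partial\widehat P\subset\widehat P_{c,\ve}$ (since $\partial\widehat P\subset\overline{\widehat P^{\complement}}$), so the defining equality $\widehat Q_{c,\ve}=\psi\circ f^{-1}(\widehat P_{c,\ve})$ gives $\psi\circ f^{-1}(\partial\widehat P)\subset\widehat Q_{c,\ve}$, where $\theta\equiv 0$ and thus $\tilde g=\widehat g=\psi\circ f\circ\psi^{-1}$. Substitution and cancellation yield
\[
h_0(x)=\widehat g\circ\psi\circ f^{-1}(x)=\psi\bigl(f(f^{-1}(x))\bigr)=\psi(x),\qquad x\in\partial\widehat P,
\]
and as $\psi$ is a homeomorphism with $\psi(\widehat P)=\widehat Q$, it sends topological boundaries to topological boundaries, giving $h_0(\partial\widehat P)=\partial\widehat Q$. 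For the inner boundary, $\partial\widehat P_1\subset\widehat P_1\subset\widehat P_{1,\ve}$ forces $\psi\circ f^{-1}(\partial\widehat P_1)\subset\widehat Q_{1,\ve}$, where $\theta\equiv 1$ and $\tilde g=g$. Piecewise injectivity of $f$ on the orbit $\widehat P=\bigcup_{i=0}^{n_0-1}\overline{P_i}$ (a disjoint union of partition elements of $\bsP(f^N)$) makes $f|_{\widehat P}$ a homeomorphism onto $\widehat P_1$, and invariance of domain applied through the extensions $\tilde f_i$ gives $f^{-1}(\partial\widehat P_1)=\partial\widehat P$. Running the same argument for $g$ on $\widehat Q$ yields
\[
h_0(\partial\widehat P_1)=g\bigl(\psi(\partial\widehat P)\bigr)=g(\partial\widehat Q)=\partial\widehat Q_1.
\]

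For the remaining surjectivity $h_0(\widehat P\setminus\widehat P_1)=\widehat Q\setminus\widehat Q_1$, I would first show that $\tilde g$ is a uniform $\cC^0$-small perturbation of $\widehat g$: by $d_1(f,g)<\delta$ and the fact that the same $\psi\in H(\bsP(f^{n_0}),\bsP(g^{n_0}))$ can be chosen to nearly realise $\rho(f^n,g^n)$ for all $n\leq n_0$ (as already used in the set-up preceding the lemma), $\norm{g-\widehat g}_{\cC^0(X,X)}$ can be made arbitrarily small with $\delta$. Writing $\tilde g(y)=\widehat g(y)+\theta(y)(g(y)-\widehat g(y))$ displays $\tilde g$ as a $\cC^0$-small perturbation of the homeomorphism $\widehat g$, and precomposing with the homeomorphism $\psi\circ f^{-1}$ makes $h_0$ a $\cC^0$-small perturbation of the homeomorphism $\psi\colon\overline{\widehat P\setminus\widehat P_1}\to\overline{\widehat Q\setminus\widehat Q_1}$. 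Combined with the boundary correspondences above, a standard invariance-of-domain / topological-degree argument applied on each connected piece of $\widehat P\setminus\widehat P_1\subset\bR^d$ yields $h_0(\widehat P\setminus\widehat P_1)\supset\widehat Q\setminus\widehat Q_1$, while the $\cC^0$-closeness of $h_0$ to $\psi$ (which maps interior to interior of the target) gives the reverse inclusion.

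The main obstacle is the last step. Although the boundary behaviour of $h_0$ is that of a homeomorphism and $h_0$ is $\cC^0$-close to one, the annular sets $\widehat P\setminus\widehat P_1$ and $\widehat Q\setminus\widehat Q_1$ are only guaranteed to be finite unions of closed regions in $\bR^d$ with piecewise-smooth boundary, so the degree argument has to be justified componentwise. The quantitative inputs to be tracked carefully are the $\cC^0$-smallness of $g-\widehat g$ (requiring $\delta$ small relative to $\ve$), together with $\ve$ small relative to the geometric scales of $f$ (contraction ratio and the partition-separation distance $\min_{P,Q\in\bsP(f^N)}d_0(\overline{f^N(P)},\partial Q)$ from Theorem~\ref{thm:op}).
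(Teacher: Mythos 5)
The boundary computations (that $h_0 = \psi$ near $\partial\widehat P$, $h_0 = g\circ\psi\circ f^{-1}$ near $\partial\widehat P_1$, and the two displayed identities) are exactly the paper's, and that part of the proposal is fine. The difference lies in the surjectivity step, where you invoke a degree / invariance-of-domain argument in place of the paper's more elementary device: the paper sets up the displacement map $k(z) = (p+z) - h_0(p+z)$ on the ball $B_{3\delta}(0)$, observes $k(B)\subset B$ using $\|h_0 - id\|_{\cC^0}\leq 2\delta$ (their equation preceding the two-case discussion), and applies Brouwer directly. Your route is spiritually related, but the way you carry it out contains a genuine error.

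The error is the repeated assertion that $\psi$ is a homeomorphism from $\overline{\widehat P\setminus\widehat P_1}$ onto $\overline{\widehat Q\setminus\widehat Q_1}$. This is false. The map $\psi$ sends partition elements of $\bsP(f^{n_0})$ to partition elements of $\bsP(g^{n_0})$, so $\psi(\widehat P)=\widehat Q$; but $\widehat P_1 = f(\widehat P)$ and $\widehat Q_1 = g(\widehat Q)$ are \emph{images} under $f$ and $g$, not unions of partition elements, and $\psi$ is not a conjugacy between $f$ and $g$. Hence there is no reason that $\psi\circ f = g\circ \psi$ on $\widehat P$, and in general $\psi(\widehat P_1)\neq\widehat Q_1$. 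Indeed, the whole point of the lemma is that $h_0$ — not $\psi$ — is the map with the correct boundary behaviour, precisely because $h_0$ switches between $\psi$ on the outer boundary and $g\circ\psi\circ f^{-1}$ on the inner boundary. Both places where you use this false claim are load-bearing: the homotopy of $h_0$ to $\psi$ to compute the degree cannot be guaranteed to avoid $p$ near the inner boundary (since $\psi(\partial\widehat P_1)$ and $\partial\widehat Q_1$ differ), and the reverse-inclusion argument ``$\psi$ maps interior to interior of the target'' simply is not available. The paper avoids both issues by never comparing $h_0$ to $\psi$ as a map of annuli; it uses only the $\cC^0$-closeness of $h_0$ to the \emph{identity} and applies Brouwer pointwise, handling the near-boundary points separately via the local invertibility of $h_0$ on $(\widehat P_{c,\ve}\cup \widehat P_{1,\ve})\cap\widehat P$. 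A correct degree-theoretic variant of your argument would compute the degree of $h_0$ at $p$ directly from the boundary identities $h_0(\partial\widehat P)=\partial\widehat Q$ and $h_0(\partial\widehat P_1)=\partial\widehat Q_1$ (both realized by homeomorphisms), without passing through $\psi$.
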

     \begin{proof}
      Firstly, using the properties of the homeomorphism \(\psi\), we have for each $x\in \overline{\widehat P\setminus\widehat P_1}$, 
     \begin{equation}\label{eq:identity}
     |h_0(x)-x|=\Big| \theta(x) [g\circ \psi\circ f^{-1}(x)-x]+(1-\theta(x))[\psi(x)-x]\Big|\leq 2\delta.
     \end{equation}
	 If $x\in \widehat P_{c,\ve}\cap\widehat P$, then \(\psi\circ f^{-1}(x)\in \widehat Q_{c,\ve}\), thus $h_0(x)=\widehat g\circ \psi\circ f^{-1}(x)=\psi(x)$. While, if $x\in \widehat P_{1,\ve}$, then \(\psi\circ f^{-1}(x)\in \widehat Q_{1,\ve}\), and thus $h_0(x)=g\circ \psi\circ f^{-1}(x)$.
     In addition,
     \[
     \begin{split}
     &h_0(\partial \widehat P)=\psi(\partial \widehat P)=\partial \widehat Q\\
     &h_0(\partial \widehat P_1)=g\circ \psi\circ f^{-1}(\partial\widehat P_1)=g\circ \psi(\partial\widehat P)=g(\partial \widehat Q)=\partial \widehat Q_1.
     \end{split}
     \]
     Also, if $\delta$ is small enough, then $h_0$ is invertible on $(\widehat P_{c,\ve}\cup\widehat P_{1, \ve})\cap \widehat P$.
     Thus, to prove surjectivity it suffices to prove that each $p\in (\widehat Q\setminus\widehat Q_1)\setminus h_0((\widehat P_{c,\ve}\cup\widehat P_{1, \ve})\cap \widehat P)$ belongs to $h_0(\widehat P\setminus \widehat P_1)$. Let $B=\{z\in\bR^{n}\;:\; \|z\|\leq 3\delta\}$, $x=p+z$ and $h_0(x-p)=x-h_0(x)$. Then $h_0(x)=p$ is equivalent to
     \[
     z=h_0(z).
     \]
     Since equation \eqref{eq:identity} implies that $h_0(B)\subset B$, by Brouwer fixed-point Theorem it follows that there exists at least one $z\in B$ such that $h_0(z+p)=p$ and, for $\delta\leq \ve/6$, $z+p\in \widehat P\setminus \widehat P_1$.
     \end{proof}

    For the sake of convenience, let \(\widehat P_0=\widehat P\) and \(\widehat Q_0=\widehat Q\). For every \(i\in \bN\), define \(h_{i}:\overline{\widehat{P}_i\backslash \widehat{P}_{i+1}}\to \overline{\widehat{Q}_i\backslash \widehat{Q}_{i+1}}\) as \(h_{i}(x)=g\circ h_{i-1}\circ f^{-1}(x)\).
     Thus, define the semi-conjugacy \(H:\overline{\widehat{P}}\to \overline{\widehat{Q}}\) as 
	     \[
		 H(x)=\begin{cases}
         h_{i}(x), \quad\quad x\in \overline{\widehat{P}_i\backslash \widehat{P}_{i+1}}\\
         x_g, \quad\quad \quad\quad\ \  x=x_f.
         \end{cases}
		 \]
     \(H\) is continuous because for \(x\in\partial \widehat{P}_i, H(x)=h_{i+1}(x)=g\circ h_i\circ f^{-1}(x)=h_i(x)\) and for any sequence \((x_i)\in \widehat{P}_i\backslash \widehat{P}_{i+1}\) with \(x_i\to x_f\) as \(i\to\infty\), \((Hx_i)\in \widehat{Q}_i\backslash \widehat{Q}_{i+1}\), so \(H(x_i) \to x_g=H(x_f)\). Indeed \(H\) is surjective, for \(p\in \overline{\widehat{Q}}\), there exists \(i\in\bN\cup\{0\}\) such that \(p\in \overline{\widehat Q_i\setminus\widehat Q_{i+1}}\), we use induction on \(i\). If \(i=0\) then using Lemma~\ref{lem:surj}, there exists \(z\in \overline{\widehat Q_i\setminus\widehat Q_{i+1}}\) such that \(h_0(z)=p\). Instead if \(i\neq 0\) then \(p_1=g^{-1}(p)\in \overline{\widehat Q_{i-1}\setminus\widehat Q_{i}}\), then inductively \(h_{i-1}\) being surjective, there exists \(q_1\in \overline{\widehat P_{i-1}\setminus\widehat P_{i}}\) such that \(h_{i-1}(q_1)=p_1\). Now, by definition, \(q=f(q_1)\in \overline{\widehat P_{i}\setminus\widehat P_{i+1}}\), and finally \(g\circ h_i\circ f^{-1}(q)=p\). \\
	 Furthermore, \(H\) is the wanted semi-conjugacy between \(f,\ g\), on \(\widehat{P}\) and \(\widehat{Q}\), because for \(x\in \widehat{P}\), 
	     \[
		 H_{\widehat{P}}\circ f(x)= h_{i+1}\circ f(x)=g\circ h_i\circ f^{-1}\circ f(x)=g\circ h_i(x) =g\circ H(x).
		 \]
	 To obtain a semi-conjugacy on the whole of \(NW(f)\), we repeat the same steps for every periodic partition element \(P\in \bsP(f^N), Q\in \bsP(g^N)\) with the same \(\psi\in H(\bsP(f^N),\bsP(g^N))\). Calling $\{\widehat{P}^k\}$ the collection of the union of elements associated to a periodic orbit, we have $\cup_k \widehat{P}^k=NW(f)$. Pasting these functions together, we obtain a function \(\Theta:{NW}(f)\to {NW}(g)\) with  $\Theta(x)=H_{\widehat{P}^k}(x)$ for $ x\in \widehat{P}^k\subset NW(f)$. Then \(\Theta\) is continuous on \(NW(f)\) as \(\Theta|_{\partial\widehat{P}^k}=\psi|_{\partial\widehat{P}^k}\) for every \(k\).\\
     To construct the semi-conjugacy from the other side, we use the fact that \(\psi\) is a homeomorphism and repeat the same construction using $\psi^{-1}$ instead of $\psi$ and switching the roles of $f$ and $g$.
		\end{proof}
 \section{Density} \label{sec:den}
	 We will prove Theorem~\ref{thm:den} in two steps. First we show that Markov maps are dense in a special class of systems (piecewise strongly contracting) and then we will show that such a class is itself dense in the collection of piecewise smooth contractions.
	 
 \begin{defin}[Piecewise strongly contracting]
			 A piecewise smooth contraction \(f\) with contraction coefficient \(\lambda\) and maximal partition \(\bsP(f)=\{P_1,P_2,\ldots,P_m\}\) is said to be piecewise strongly contracting if there exists \(p\in\bN\) such that \(\lambda^pm_p<1/2\), where \(m_p=\#\bsP(f^p)\).
 \end{defin} 
	 We will prove the following results:
 \begin{prop} \label{prop:markov-dense}
	 Markov maps are \(d_2\)-dense in the collection of  piecewise strong contractions.
 \end{prop}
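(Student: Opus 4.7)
The plan is to produce, for every $\varepsilon>0$, a Markov perturbation of $f$ at $d_2$-distance less than $\varepsilon$ by adding small constant translations on each element of the maximal partition. Since $d_2(f,g)<\infty$ forces $\bsP(f)=\bsP(g)$, such constant perturbations constitute the natural parametrised family to consider. Concretely, for $t=(t_1,\dots,t_m)\in(\bR^d)^m$ with $\max_i|t_i|<\delta_0$, where $\delta_0=\operatorname{dist}(\overline{f(X)},\partial X)>0$, define $g_t(x)=f(x)+t_i$ for $x\in \tilde P_i$. Then $g_t$ is a piecewise smooth contraction with the same maximal partition as $f$ and $d_2(f,g_t)=\max_i|t_i|$. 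By Theorem~\ref{thm:eq}, $g_t$ is Markov if and only if there exists $N\in\bN$ such that, for each $P\in\bsP(g_t^N)$, the set $\overline{g_t^N(P)}$ is contained in some $Q^\circ\in\bsP(g_t^N)$; equivalently, $g_t^{n}(\overline X)\cap\Delta(f)=\emptyset$ for $n\in\{N,\ldots,2N-1\}$.

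The core of the argument is a measure estimate on the bad set $\cB\subset B(0,\varepsilon)^m$ of parameters for which this condition fails. Set $N=kp$ for some $k\in\bN$ to be chosen. For each length-$n$ cylinder $P^\omega=P_{i_0}\cap g_t^{-1}(P_{i_1})\cap\cdots\cap g_t^{-(n-1)}(P_{i_{n-1}})\in\bsP(g_t^n)$ (with $n\in[N,2N-1]$) and each $\cC^2$ boundary manifold $M_j\subset\Delta(f)$, the key observation is that holding all $t_\ell$ fixed except $t_{i_{n-1}}$ translates the image $g_t^n(\overline{P^\omega})$ rigidly by $t_{i_{n-1}}$, since that parameter enters only additively in the last iteration step. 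Using $\operatorname{diam}(g_t^n(\overline{P^\omega}))\leq\lambda^n\operatorname{diam}(X)$ and the uniform $\cC^2$-bounds on $M_j$ from Definition~\ref{def:psc}, the set of bad $t_{i_{n-1}}$ (with other $t_\ell$ fixed) lies in a $\lambda^n\operatorname{diam}(X)$-tubular neighborhood of a translate of $M_j$, and hence has $d$-dimensional Lebesgue measure at most $C\lambda^n\operatorname{Vol}_{d-1}(M_j)$.

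Applying Fubini and summing over cylinders, iterates $n\in[N,2N-1]$, and boundary pieces $M_j$ gives
\[
\operatorname{Leb}(\cB)\;\leq\;C'\,(2\varepsilon)^{d(m-1)}\sum_{n=N}^{2N-1}m_n\lambda^n,
\]
with $C'$ independent of $k$. Since $m_{kp}\leq m_p^{\,k}$, the strong contraction hypothesis $m_p\lambda^p<1/2$ gives $\sum_{n=N}^{2N-1}m_n\lambda^n\leq C''(m_p\lambda^p)^k<C''\cdot 2^{-k}$. Choosing $k$ large enough that $C'C''\,2^{-k}<(2\varepsilon)^d$ yields $\operatorname{Leb}(\cB)<\operatorname{Leb}(B(0,\varepsilon)^m)$, so that there exists $t\in B(0,\varepsilon)^m\setminus\cB$; the corresponding $g_t$ is Markov with $d_2(f,g_t)<\varepsilon$.

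The main technical obstacle is the $t$-dependence of $\bsP(g_t^n)$ and the cylinders $P^\omega$: the rigid-translation statement for $g_t^n(\overline{P^\omega})$ under variation of $t_{i_{n-1}}$ is valid only for those $x\in\overline{P^\omega}$ whose itinerary is preserved under the perturbation. Exceptional $t$ where itineraries jump lie precisely near the preimages of $\Delta(f)$ and are already absorbed into $\cB$; making this rigorous requires the $\cC^2$-transversality properties of the boundary manifolds (Definition~\ref{def:psc}) together with a uniform control of perturbed preimages $g_t^{-k}(M_j)$ for $t$ in a small compact set. The strict inequality $m_p\lambda^p<1/2$ (rather than merely $<1$) is what ensures geometric decay of $\sum_n m_n\lambda^n$ at rate $2^{-k}$, which provides the slack needed for the measure bound on $\cB$ to beat the volume of the parameter ball independently of how small $\varepsilon$ is chosen.
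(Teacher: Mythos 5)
Your overall strategy --- a Sard-type measure argument over a finite-dimensional family of translations, using the strong-contraction estimate $m_p\lambda^p<1/2$ to beat the exponential growth of the number of cylinders --- is the same as the paper's. The paper, however, does not work with the iterates $g_t^n$ of the discontinuous perturbed map. Instead it perturbs by a single $\delta\in\bR^d$ (not a per-piece vector $t\in(\bR^d)^m$), forms the iterated function system $\Phi_{(f^\delta)^p}$ of the $p$-th iterate, and uses the inclusion $\Lambda(f^\delta)\subset\Lambda(\Phi_{(f^\delta)^p})$ (Lemma~\ref{lem:att-IFS}) to reduce the problem to showing that the IFS attractor misses $\partial\bsP(f)$. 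Because the IFS is a finite collection of globally defined $\cC^3$ contractions, the compositions $\varphi_{\omega_1}^\delta\circ\cdots\circ\varphi_{\omega_k}^\delta$ and their limits $\Theta_{(f^\delta)^p}(\omega)$ depend on $\delta$ in a manifestly smooth way, with symbolic alphabet fixed and independent of $\delta$. This is the device your proposal lacks.

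There are two concrete gaps. First, the ``rigid translation'' claim is false: if the symbol $i_{n-1}$ also occurs at an earlier time $j<n-1$, then $g_t^{n-1}(x)$ already depends on $t_{i_{n-1}}$, and the Jacobian $\partial_{t_{i_{n-1}}}g_t^n$ is $\Id$ plus a sum of terms of size $\lambda^{n-1-j}$ over all such $j$. The cylinder-wise measure bound implicitly needs this Jacobian to be non-degenerate (you are pulling back a thin tubular neighbourhood), and the correction can be as large as $\lambda/(1-\lambda)\geq 1$ when $\lambda\geq 1/2$. The strong-contraction hypothesis controls $m_p\lambda^p$ but says nothing about $\lambda$ itself. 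The paper deals with this by passing to the IFS of the $p$-th iterate, where $\lambda^p\leq 1/(2m_p)\leq 1/4$, so the corresponding Jacobian $D\theta(\delta)$ is bounded away from degeneracy; nothing of the sort is available with your first-iterate parametrisation.

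Second, the circularity you flag at the end is not resolvable the way you suggest. You define the bad set $\cB$ via the $t$-dependent cylinders $P^\omega\in\bsP(g_t^n)$, and you want to ignore the parameters where itineraries jump because ``they are already absorbed into $\cB$''. But $\cB$ is the very thing you are trying to estimate; you cannot use a bound on $\cB$ in order to justify the definition of $\cB$. One must first replace the itinerary-dependent objects by something with a fixed combinatorial structure. That is precisely what $\Theta_{(f^\delta)^p}:\Sigma^{m_p}\to Y$ does: for every $\delta$, every symbolic sequence $\omega\in\Sigma^{m_p}$ produces a well-defined point of the IFS attractor, whether or not the itinerary is realised by an actual orbit of $g_\delta$, and the enlargement $\Lambda(f^\delta)\subset\Lambda(\Phi_{(f^\delta)^p})$ means proving the IFS attractor misses $\Delta(f)$ is sufficient. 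Without this step your argument does not close.
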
 
 \begin{prop} \label{prop:psc-dense}
	 Piecewise strong contractions are $d_2$-dense in the collection of piecewise smooth contractions.
 \end{prop}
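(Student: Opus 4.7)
The plan is to perturb $f$ to a map $g$, arbitrarily $d_2$-close to $f$, whose partition sizes $m_p(g)$ are eventually constant. Once this is achieved, $\lambda(g)^p m_p(g)\to 0$ immediately, giving the strong contraction condition $\lambda^p m_p<1/2$ for all sufficiently large $p$.

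First I would fix the geometry. Using the transversality and smoothness of the partition boundaries in Definition~\ref{def:psc}, one obtains a ``feature size'' $\delta_0>0$ such that any set in $X$ of diameter less than $\delta_0$ is contained in the closures of at most a uniformly bounded number of elements of $\bsP(f)$. Pick $p_0\in\bN$ with $\lambda^{p_0}\operatorname{diam}(X)<\delta_0/2$; for any $g$ with $d_2(f,g)$ small, the contraction coefficient of $g$ is close to $\lambda$, so the same estimate yields $\operatorname{diam}(g^k(P))<\delta_0$ for every $P\in\bsP(g^{p_0})$ and every $k\geq p_0$.

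The heart of the argument is a Baire/transversality step. For each of the finitely many $P\in\bsP(f^{p_0})$ and each $k\geq p_0$, introduce the condition $\mathcal{C}_{P,k}$: the closure $\overline{g^k(P)}$ lies in the interior of a single element of $\bsP(g)$. Linearising $g\mapsto g^k$ shows that a $\cC^2$-perturbation $\eta$ of $f$ produces a displacement $\psi_k=\sum_{j=0}^{k-1}Dg^{k-j-1}(g^{j+1}(\cdot))\,\eta(g^j(\cdot))$ of $g^k$, uniformly bounded in $k$ by the geometric series $\|\eta\|_{\cC^0}/(1-\lambda)$ and, for a suitable choice of $\eta$, with nontrivial normal component against any prescribed partition manifold. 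Combined with the transversality of the partition boundaries, this makes the failure set of $\mathcal{C}_{P,k}$ closed of codimension at least one in the $\cC^2$-perturbation space; by Baire applied to the countable family $\{\mathcal{C}_{P,k}\}_{P,k}$, the set of $g$ satisfying all of them is $\cC^2$-dense, hence $d_2$-dense. For any such $g$, the itinerary of every $P\in\bsP(g^{p_0})$ is uniquely extended for all $k\geq p_0$, so $\bsP(g^p)=\bsP(g^{p_0})$ as a partition for every $p\geq p_0$, $m_p(g)=m_{p_0}(g)$ is constant in $p$, and $\lambda(g)^p m_p(g)\to 0$, giving the desired strong contraction.

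The main obstacle is securing the uniform-in-$k$ codimension-one character of the conditions $\mathcal{C}_{P,k}$: the strong contraction of $g$ makes $g^k(P)$ decreasingly sensitive to any single local change of $g$, so one must exploit the cumulative effect of the perturbation across all iterates to ensure that the failure sets are genuinely nowhere dense uniformly in $k$. The geometric-series bound on $\psi_k$ above is precisely what provides this uniformity and closes the argument.
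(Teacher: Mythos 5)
Your strategy aims for considerably more than Proposition~\ref{prop:psc-dense} demands: you try to make $m_p(g)$ eventually \emph{constant}, which amounts to proving that $g$ is Markov (indeed, ``$\overline{g^k(P)}$ lies in a single partition element for all $k\geq p_0$ and all $P\in\bsP(g^{p_0})$'' is exactly stabilisation at time $p_0$ in the sense of Definition~\ref{def:stp}). The paper instead only controls the \emph{growth rate} of $m_p$: in Proposition~\ref{lem:good_partn} one perturbs so that at most $2^{dm^{d-1}l_1}$ elements of $\bsP(\tilde f^N)$ meet at any point, and then, in the proof of Proposition~\ref{prop:psc-dense}, a covering argument using contraction bounds $\#\bsP(\tilde f^{jkN})\leq L\,(2^{dm^{d-1}l_1})^j$, which decays against $\lambda^{jkN}$ because $N$ was chosen with $\lambda^N2^{dm^{d-1}l_1}<1/4$. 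That combinatorial bound is all one needs for $\lambda^p m_p<1/2$; requiring $m_p$ to literally stop growing is a harder target.

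The genuine gap, though, is in the Baire/transversality step. You assert that the failure set of $\mathcal{C}_{P,k}$ is ``closed of codimension at least one in the $\cC^2$-perturbation space'' by pointing to the displacement formula $\psi_k=\sum_{j=0}^{k-1}Dg^{k-j-1}(g^{j+1}(\cdot))\,\eta(g^j(\cdot))$ and claiming one can pick $\eta$ with nontrivial normal component against any given boundary manifold. This step does not survive scrutiny, and the reasons are precisely the two obstructions the paper devotes Sections~\ref{sec:fixed} and~\ref{sec:manifold} to overcoming. First, because of the geometric factors $Dg^{k-j-1}$, the normal component of $\psi_k$ is dominated by the last few summands, i.e.\ by the values of $\eta$ on the tail of the orbit of $P$, which lies near the attractor; if that tail is periodic or shadows a fixed point located near a discontinuity manifold, the same value $\eta(x)$ appears over and over in a constrained way, so the normal component cannot be prescribed freely. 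This is exactly why the paper first proves Propositions~\ref{prop:fix-sep} and~\ref{prop:bad_points}, which move the fixed points of all short compositions off a neighbourhood of $D^N(\Phi)$. Second, a single global perturbation $\eta$ of $g$ changes the itinerary of $P$, which in turn changes which manifolds you need to be transversal to; the paper explicitly warns that applying a transversality theorem to the composition gives a perturbation of the composition, not of the individual maps, and resolves this via the IFS framework, localized bump perturbations in small balls $B_\delta(z_i)$ and the self-non-interference property guaranteed by Proposition~\ref{prop:bad_points} (the composed map never returns the ball to itself). Without addressing these two points, the claim that each $\mathcal{C}_{P,k}$ has a nowhere-dense complement (let alone uniformly enough for Baire) remains unproved, and the argument does not close.

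A smaller remark: the ``feature size'' $\delta_0$ you invoke, for which a small set meets a uniformly bounded number of elements of $\bsP(f)$, is trivially bounded by $m$; what is actually needed, and non-trivial, is the analogous bound for the much finer partitions $\bsP(f^N)$, and that is exactly the content of Proposition~\ref{lem:good_partn}, which is where the perturbative transversality work is invested.
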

	 These two Propositions readily imply our main result.
	 \begin{proof}[\bf \em Proof of Theorem~\ref{thm:den}]
	 Let \(f\) be a piecewise smooth contraction. By Proposition~\ref{prop:psc-dense}, for each \(\ve>0\), there exists a piecewise strong contraction \(f_1\) such that \( d_2(f,f_1)<\ve/2\). In addition, by Proposition~\ref{prop:markov-dense}, there exists a  piecewise smooth Markov contraction \(f_2\) such that \( d_2(f_1,f_2)<\ve/2\), hence the result.
 \end{proof}
	 
In the rest of the paper, we prove Proposition~\ref{prop:markov-dense} and Proposition~\ref{prop:psc-dense}. 

The basic idea of the proof is to introduce iterated function systems (IFS) associated with the map. The attractor of the IFS is greater than the one of the map (see section \ref{sec:ITF}  for the relation between the two sets) hence if we can prove that the attraction of the IFS is disjoint from the discontinuities of the map, so will be the attractor fo the map. The advantage is that, in this way, the study of the boundaries of the elements of $\bsP(f^n)$ is reduced to the study of the pre-images of the discontinuities of $f$ under the IFS. Hence we can iterate smooth maps rather than discontinuous ones. 

This advantage is first exploited in the section \ref{sec:4.2}, where we prove Proposition~\ref{prop:markov-dense} using an argument that is, essentially, a quantitative version of Sard's theorem.

To prove Proposition~\ref{prop:psc-dense}, the rough idea is to use a transversality Theorem (see Appendix \ref{sec:transversality}) to show that if a lot of pre-images intersect, then, generically, their intersection should have smaller and smaller dimensions till no further intersection is generically possible. Unfortunately, if we apply a transversality theorem to a composition of maps of the IFS, we get a perturbation of the composition and not of the single maps. How to perturb the single maps in such a way that the composition has the wanted properties is not obvious.

Our solution to this problem is to make sure that if we perturb the maps in a small neighbourhood $B$, and we consider arbitrary compositions of the perturbed maps, then all the images of $B$ along the composition never intersect $B$. Hence, if we restrict the composition to $B$, all the maps, except the first, will behave as their unperturbed version. To ensure this, it suffices to prove that such compositions have no fixed points near the singularity manifolds (such implication is proven in Lemma \ref{lem:fix}).
To this end, in Propositions \ref{prop:fix-sep} and \ref{prop:bad_points}, we show that one can control the location of the fixed points of the compositions of the map of the IFS by an arbitrarily small perturbation.

After this, we can finally set up an inductive scheme to ensure that the pre-images of the discontinuity manifolds keep intersecting transversally. This is the content of Proposition \ref{lem:good_partn} from which Proposition~\ref{prop:psc-dense} readily follows.

\subsection{IFS's associated to the map and their properties}\ \\
\label{sec:ITF}
We start by recalling the definition of IFS relevant to our argument and exploring some of its properties.
 \begin{defin}[Iterated Function System]
	 The set \(\Phi=\{\phi_1, \phi_2. \ldots,\phi_m\}, m\geq2\), is an Iterated Function System (IFS) if each map \(\phi_i:\bR^d\to \bR^d\) is a Lipschitz contraction.\footnote{ In the following we will consider only $\cC^3$ maps $\phi_i$.}
 \end{defin}

Let \(f\) be a piecewise smooth contraction with maximal partition \(\bsP(f)\). In analogy with \cite{NPR2},  we define an Iterated Function System associated to \(f\) as follows:\\ 
	 By Definition \ref{def:psc}, \(\tilde f|_{U_i}\) is \(\cC^3\)  for every \(i\) and \(D\tilde f|_{U_i}\leq\lambda<1\), using the \(\cC^r\) version of Kirszbraun-Valentine Theorem~\ref{thm:sKV}, we obtain a \(\cC^3\) extension \(\phi_i:\bR^d\to\bR^d\) of \(\tilde f|_{U_i}\), and hence of \(f|_{P_i}\), so that  \(\|D\phi_i\|\leq \lambda<1\) for all \(i\in\{1,2,\dots,m\}\). Denote a $\cC^3$ IFS associated to \(f\) as 
		 \[
		 \Phi_{\! f}=\{\phi_1,\phi_2,\ldots,\phi_m\}.
		 \] 
\begin{rem}\label{rem:finite-to-one} Unfortunately, it is not obvious if one can obtain an extension in which $\phi_i$ are invertible. This would simplify the following arguments as one would not have to struggle to restrict the discussion to the sets $U_i$ (e.g. see \eqref{eq:admissible}). However, since $\cC^\infty(\bR^d,\bR^d)$ finite to one maps are generic by Tougeron's Theorem, see \cite[Theorem 2.6 page 169]{GG}, we can assume, by an arbitrarily small perturbation of $f$, that the $\phi_i$ are finite to one.
\end{rem}

	 For \(m,n\in\bN\), let \(\Sigma^m_n=\{1,2,\ldots,m\}^n\) and \(\Sigma^m=\{1,2,\ldots,m\}^{\bN}\) be the standard symbolic spaces. We endow \(\Sigma^m\) with the metric \(d_\gamma\), for some \(\gamma >1\), 
\begin{equation}\label{eq:gamma}
		 d_{\gamma}(\sigma, \sigma^{\prime})=\sum_{i=1}^{\infty}\frac{|\sigma_i-\sigma^{\prime}_i|}{\gamma^i}.
\end{equation}
In addition, let \(\tau:\Sigma^m\to \Sigma^m\) be the left subshift: \(\tau(\sigma_1,\sigma_2,\sigma_3, \ldots)= (\sigma_2,\sigma_3,\ldots)\).\\
	 Set\footnote{\(\norm{\cdot}\) is the general Euclidean norm.} $K=\max\{\|x\|\;:\;x\in X\}$. Let \(M=\sup_i\|\phi_i(0)\|\) then for each $y\in\bR^d$, 
		 \[
		 \norm{\phi_i(y)}\leq \norm{\phi_i(y)-\phi_i(0)}+M\leq \lambda \norm{y}+M .
		 \]
	 Thus, setting 
\begin{equation}\label{eq:Y}
Y=\{y\in \bR^d\;:\; \|y\|\leq \max\{K,(1-\lambda)^{-1} M\}\},
\end{equation}
we have \(\phi_i(Y)\subset Y\), for all $i\in\{1,\dots, m\}$, \(X\subset Y \) and \(Y\) is a \(d\)-dimensional manifold with boundary.\\
	  Next, define \(\Theta_{\! f}:\Sigma^m\to Y\) as
 \begin{equation}\label{eq:defn_psi}
		  \Theta_{\! f}(\sigma)=\bigcap\limits_{n\in\bN} {\phi_{\sigma_1}\circ \phi_{\sigma_2}\circ \ldots \circ \phi_{\sigma_n}(Y)}.
 \end{equation}
     where \(\sigma=(\sigma_1, \sigma_2,\ldots,)\in \Sigma^m\).
	 The sets \(\{\phi_{\sigma_1}\circ \phi_{\sigma_2}\circ \ldots \phi_{\sigma_n}(Y)\}_{n\in\bN}\) form a nested sequence of compact subsets of  \(Y\). Additionally, \(\operatorname{diam}({\phi_{\sigma_1}\circ \phi_{\sigma_2}\circ \cdots\circ \phi_{\sigma_n}(Y)})\to 0\) as \(n\to\infty\), so by Cantor's Intersection theorem \(\Theta_{\! f}(\sigma)\) is a single element in \(Y\) for every \(\sigma \in \Sigma\) which implies \(\Theta_{\! f}\) is well defined. 
	 We define the attractor of the IFS \(\Phi_{\! f}\) as
 \begin{equation}\label{eq:att}
		 \Lambda(\Phi_{\! f})=\Theta_{\! f}(\Sigma^m).
 \end{equation}
 \begin{lem}\label{lem:compact_attractor}
	 The function \(\Theta_{\! f}:\Sigma^m\to Y\) is continuous. In turn, \(\Lambda(\Phi_{\! f})\) is compact.
 \end{lem}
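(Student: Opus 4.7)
The plan is to exploit the uniform contraction $\|D\phi_i\|\le\lambda<1$ to control the diameter of the cylinder sets $\phi_{\sigma_1}\circ\cdots\circ\phi_{\sigma_n}(Y)$, and then to translate this into a continuity estimate for $\Theta_{\! f}$ by relating closeness in the metric $d_\gamma$ to agreement on initial coordinates. Compactness of $\Lambda(\Phi_{\! f})$ will then be immediate from compactness of $\Sigma^m$ and continuity of $\Theta_{\! f}$.

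First, I would record the basic diameter bound: since each $\phi_i$ is $\lambda$-Lipschitz, an $n$-fold composition is $\lambda^n$-Lipschitz, so for every finite word $(\sigma_1,\dots,\sigma_n)\in\Sigma^m_n$,
\[
\operatorname{diam}\bigl(\phi_{\sigma_1}\circ\cdots\circ\phi_{\sigma_n}(Y)\bigr)\le \lambda^n\operatorname{diam}(Y).
\]
In particular, if $\sigma,\sigma'\in\Sigma^m$ share the same first $n$ symbols, then both $\Theta_{\! f}(\sigma)$ and $\Theta_{\! f}(\sigma')$ lie in $\phi_{\sigma_1}\circ\cdots\circ\phi_{\sigma_n}(Y)$, so
\[
\bigl\|\Theta_{\! f}(\sigma)-\Theta_{\! f}(\sigma')\bigr\|\le \lambda^n\operatorname{diam}(Y).
\]

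Next, I would relate this to the metric $d_\gamma$ on $\Sigma^m$: if $\sigma_i\ne\sigma'_i$ for some $i\le n$ then $|\sigma_i-\sigma'_i|\ge 1$ so $d_\gamma(\sigma,\sigma')\ge\gamma^{-i}\ge\gamma^{-n}$. Contrapositively, $d_\gamma(\sigma,\sigma')<\gamma^{-n}$ forces the first $n$ symbols to coincide. Given $\varepsilon>0$, pick $n$ with $\lambda^n\operatorname{diam}(Y)<\varepsilon$ and set $\delta=\gamma^{-n}$; the two estimates together give $\|\Theta_{\! f}(\sigma)-\Theta_{\! f}(\sigma')\|<\varepsilon$ whenever $d_\gamma(\sigma,\sigma')<\delta$, proving continuity of $\Theta_{\! f}$.

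For compactness, $\Sigma^m=\{1,\dots,m\}^{\bN}$ is a countable product of finite (hence compact) discrete spaces, so by Tychonoff's theorem it is compact, and one checks that the metric $d_\gamma$ induces the product topology (the cylinders $\{\sigma'\;:\;\sigma'_i=\sigma_i,\ i\le n\}$ are precisely the $\gamma^{-n}$-balls up to comparable constants). Thus $\Lambda(\Phi_{\! f})=\Theta_{\! f}(\Sigma^m)$ is the continuous image of a compact space, hence compact.

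There is no real obstacle in this argument; the only point that requires care is keeping the relation between $d_\gamma$ and coordinate-agreement straight, which is handled by the single elementary inequality above.
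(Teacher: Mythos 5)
Your proof is correct and follows essentially the same route as the paper: both use the $\lambda^n$-Lipschitz bound on $n$-fold compositions to get the diameter estimate, translate $d_\gamma$-closeness into agreement of initial symbols, and conclude compactness of $\Lambda(\Phi_{\! f})$ from continuity and compactness of $\Sigma^m$. Your phrasing that both $\Theta_{\! f}(\sigma)$ and $\Theta_{\! f}(\sigma')$ lie in the common cylinder image $\phi_{\sigma_1}\circ\cdots\circ\phi_{\sigma_n}(Y)$ is a marginally cleaner way to state the same estimate the paper writes out as a supremum over $x,y\in Y$.
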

 \begin{proof}
	 For given \(\ve>0\), there exists \(k\in\bN\) such that \(\lambda^k\operatorname{diam}(Y)<\ve\), let \(\delta=\gamma^{-k}\), where $\gamma$ is the one in \eqref{eq:gamma}. For \(\sigma,\sigma^\prime\in \Sigma^m\), \(d_{\gamma}(\sigma, \sigma^{\prime})<\delta=\gamma^{-k}\) implies, for all \(i<k\), \(\sigma_i=\sigma^{\prime}_i\), hence, for any \(x,y\in Y\),
 \begin{align*}
		& d_0(\Theta_{\! f}(\sigma), \Theta_{\! f}(\sigma^\prime)) \leq\\
		&\leq\sup_{x,y\in Y} d_0(\phi_{\sigma_1}\circ\cdots\circ\phi_{\sigma_k}\circ\phi_{\sigma_{k+1}}\ldots\circ\phi_{\sigma_n}(x),\phi_{\sigma_1}\circ\cdots\circ\phi_{\sigma_k}\circ\phi_{\sigma^{\prime}_{k+1}}\ldots\circ \phi_{\sigma^{\prime}_n}(y))\\
		 &<\lambda^k \sup_{x,y\in Y} d_0(\phi_{\sigma_{k+1}}\circ\cdots\circ\phi_{\sigma_n}(x),\phi_{\sigma^{\prime}_{k+1}}\ldots\circ \phi_{\sigma^{\prime}_n}(y))\\
		 &<\lambda^k \operatorname{diam}(Y)<\ve,
 \end{align*}
hence the continuity with respect to \(\sigma\). 
     Since \(\Sigma^m\) is compact, the attractor \(\Lambda(\Phi_{\! f})\) is compact being the continuous image of a compact set.
	 \end{proof}
\begin{rem}
	 Let \(\Phi_{\! f}=\{\phi_1,\ldots,\phi_m\}\) be an IFS associated to a piecewise contraction \(f\). For \(p\in\bN\), let \(\bsP(f^p)=\{P^p_1,P^p_2,\ldots, P^p_{m_p}\}\) be the partition of \(f^p\) given as in equation~(\ref{eq:n-partn}). Define the corresponding IFS associated to \(f^p\) as
		 \[
		 \Phi_{f^p}=\{\varphi_1,\varphi_2,\ldots,\varphi_{m_p}\}
		 \]
	 where for all \(i\in\{1,2,\ldots,m_p\}\) there exists unique  \(\sigma^i=(\sigma^i_1,\ldots,\sigma^i_p)\in\Sigma_p^m\) (uniquely determined by the partition element \(P_i^p\in\bsP(f^p)\)) such that 
	 \[
	 \varphi_i=\phi_{\sigma_1^i}\circ \phi_{\sigma_2^i}\circ\cdots\circ\phi_{\sigma_p^i}.
	 \]
 The attractor of \(\Phi_{f^p}\) is \(\Lambda(\Phi_{f^p})= \Theta_{\! f^p}(\Sigma^{m_p})\). 
	 To avoid confusion we denote the elements in \(\Sigma^m\) by \(\sigma\) and the elements in \(\Sigma^{m_p}\) by \(\omega\). 
\end{rem}
 \begin{lem} \label{lem:att-IFS}
	 For a piecewise smooth contraction \(f\) with IFS \(\Phi_{\! f}\), and \(p\in\bN\), the following relation holds: 
		 \[ 
		 \Lambda(f)\subset\Lambda(f^p)\subset  \Lambda(\Phi_{f^p})\subset \Lambda(\Phi_{\! f}).
		 \] 
 \end{lem}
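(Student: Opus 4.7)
The plan is to prove the three inclusions in order; the first and third are essentially bookkeeping, and the only step requiring care is the middle one, which rests on a compactness/diagonal argument. For the first inclusion, if $x\in\Lambda(f)=\bigcap_{k}\overline{f^k(X)}$, then in particular $x\in\overline{f^{np}(X)}$ for every $n\in\bN$, hence $x\in\Lambda(f^p)$.

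For the middle inclusion $\Lambda(f^p)\subset\Lambda(\Phi_{f^p})$, let $x\in\Lambda(f^p)$. For each $n\in\bN$ choose $y_n\in X$ with $f^{np}(y_n)=x$, and for $k=1,\dots,n$ let $\omega_k^{(n)}\in\{1,\dots,m_p\}$ denote the index of the element of $\bsP(f^p)$ containing $f^{(n-k)p}(y_n)$. Since $f^p$ coincides with $\varphi_j$ on $P_j^p$, iterating gives
\[
x=\varphi_{\omega_1^{(n)}}\circ\varphi_{\omega_2^{(n)}}\circ\cdots\circ\varphi_{\omega_n^{(n)}}(y_n)\in \varphi_{\omega_1^{(n)}}\circ\cdots\circ\varphi_{\omega_n^{(n)}}(Y).
\]
Extend each $\omega^{(n)}$ arbitrarily to an element of the compact space $\Sigma^{m_p}$. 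By compactness, a subsequence converges in the metric $d_\gamma$ to some $\omega\in\Sigma^{m_p}$; since symbols take finitely many integer values, for every fixed $k$ the first $k$ coordinates stabilise at $\omega_1,\dots,\omega_k$ along the subsequence. Using $\varphi_i(Y)\subset Y$ to absorb the tail yields $x\in\varphi_{\omega_1}\circ\cdots\circ\varphi_{\omega_k}(Y)$ for every $k$, so $x=\Theta_{f^p}(\omega)\in\Lambda(\Phi_{f^p})$.

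The third inclusion $\Lambda(\Phi_{f^p})\subset\Lambda(\Phi_{\! f})$ follows from the definition of $\Phi_{f^p}$: each $\varphi_i$ is a fixed composition $\phi_{\sigma^i_1}\circ\cdots\circ\phi_{\sigma^i_p}$ of maps from $\Phi_{\! f}$. Given $\omega\in\Sigma^{m_p}$, concatenating the length-$p$ blocks $\sigma^{\omega_1},\sigma^{\omega_2},\dots$ produces a sequence $\sigma\in\Sigma^m$ satisfying $\varphi_{\omega_1}\circ\cdots\circ\varphi_{\omega_n}=\phi_{\sigma_1}\circ\cdots\circ\phi_{\sigma_{np}}$. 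Thus $\Theta_{f^p}(\omega)$ is the intersection of a cofinal subsequence of the nested family $\{\phi_{\sigma_1}\circ\cdots\circ\phi_{\sigma_k}(Y)\}_k$, and equals $\Theta_{\! f}(\sigma)\in\Lambda(\Phi_{\! f})$.

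The main (and only) substantive step is the diagonal/compactness argument extracting an infinite symbolic sequence $\omega$ from the family of finite orbits witnessing $x\in\Lambda(f^p)$; the other two inclusions are routine manipulations of nested intersections and symbolic concatenations.
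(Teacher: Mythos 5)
Your overall approach matches the paper's: the first and third inclusions are bookkeeping, and the middle one is handled by a compactness argument on $\Sigma^{m_p}$. However, there is a gap in the middle step that the paper is careful to avoid. You write ``for each $n\in\bN$ choose $y_n\in X$ with $f^{np}(y_n)=x$,'' but membership in $\Lambda(f^p)=\bigcap_n\overline{f^{np}(X)}$ only guarantees that $x$ lies in the \emph{closure} of $f^{np}(X)$, not in $f^{np}(X)$ itself. Since $f$ is discontinuous, $f^{np}(X)$ need not be closed (indeed $\tilde P_i$ need not be compact), so such an exact preimage $y_n$ may not exist. The rest of your argument then rests on the exact identity $x=\varphi_{\omega_1^{(n)}}\circ\cdots\circ\varphi_{\omega_n^{(n)}}(y_n)$, which is unavailable.

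The repair is exactly what the paper does: choose $y_n$ with $d_0(x,f^{np}(y_n))<1/n$, define $\omega^{(n)}$ from the orbit of $y_n$ as you do, and observe that $f^{np}(y_n)\in\varphi_{\omega_1^{(n)}}\circ\cdots\circ\varphi_{\omega_n^{(n)}}(Y)$. After passing to a convergent subsequence $\omega^{(n_j)}\to\omega$, for fixed $k$ the first $k$ symbols stabilise, so $f^{n_jp}(y_{n_j})$ eventually lies in the compact set $\varphi_{\omega_1}\circ\cdots\circ\varphi_{\omega_k}(Y)$; since $f^{n_jp}(y_{n_j})\to x$ and that set is closed, $x\in\varphi_{\omega_1}\circ\cdots\circ\varphi_{\omega_k}(Y)$. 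Letting $k\to\infty$ gives $x=\Theta_{f^p}(\omega)$. (Equivalently, one can invoke the continuity of $\Theta_{f^p}$ from Lemma~\ref{lem:compact_attractor}, as the paper does.) With this adjustment your proof is correct and follows the paper's route.
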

 \begin{proof}
	 For \(p\in\bN\), let \(\bsP(f^p)=\{P_1^p,P_2^p,\ldots,P_{m_p}^p\}\), and \(m_1=m\).
	 We start by proving the first inclusion, that is \(\Lambda(f)\subset \Lambda(f^p)\).  Let \(x\in \Lambda(f)= \cap_{n\in\bN}\overline{{f}^n(X)}\), that is, for every \(n\in\bN\), \(x\in \overline{f^n(X)}\). Accordingly, 
	 \[
	 x\in \bigcap_{n\in\bN}\overline{(f^p)^n(X)}= \Lambda(f^p).
	 \]
	 Thus \(\Lambda(f)\subset \Lambda(f^p)\). For the third inclusion, that is, \(\Lambda(\Phi_{f^p})\subset \Lambda(\Phi_{\! f})\): let \(\Phi_{f^p}=\{\vf_1,\vf_2,\ldots,\vf_{m_p}\}\), then for every \(\omega=(\omega_1,\omega_2,\ldots)\in\Sigma^{m_p}\) there exists  \(\sigma^{\omega}=(\sigma^{\omega_1},\sigma^{\omega_2}, \dots)\), where \(\sigma^{\omega_i}=(\sigma^{\omega_i}_1,\ldots,\sigma^{\omega_i}_p)\in\Sigma^{m}_p\), such that 
	 \[
\Theta_{\! f^p}(\omega)=\bigcap_{n\in\bN}\varphi_{\omega_1}\circ\varphi_{\omega_2}\circ\cdots\circ\varphi_{\omega_n}(Y)=\bigcap_{n\in\bN} \phi_{\sigma^{\omega_1}_1}\circ\cdots\circ\phi_{\sigma^{\omega_1}_p}\circ\cdots\circ\phi_{\sigma^{\omega_n}_p}(Y).
	 \] 
Thus,  \(\Lambda(\Phi_{f^p})= \bigcup_{\omega\in\Sigma^{m_p}}\Theta_{\! f^p}(\omega)=\bigcup_{\{\sigma^\omega\;:\;\omega\in\Sigma^{m_p}\}}\Theta_{\! f}(\sigma^\omega)\subset \bigcup_{\sigma\in\Sigma^{m}}\Theta_{\! f}(\sigma)=\Lambda(\Phi_{\! f}) \). Finally, for the second inclusion: let \(x\in \Lambda(f^p)=\bigcap_{n\in\bN}\overline{f^{pn}(X)}\) then for all \(n\in\bN\) there exists \(y_n\in X\) such that \(d_0(x,f^{pn}(y_n))<1/n\). By definition of \(\Phi_{f^p}\), there exists \(\omega^n=(\omega_1^n,\omega_2^n,\ldots,\omega_n^n,\ldots)\in\Sigma^{m_p}\) such that \(f^{pn}(y_n)=\vf_{\omega_1^n}\circ\cdots\circ \vf_{\omega_n^n}(y_n)\) where \(\vf_{\omega_i^n}\in\Phi_{f^p}\).  This implies, for all \(n\in\bN\), \(d_0(x,\vf_{\omega_1^n}\circ\cdots\circ \vf_{\omega_n^n}(y_n))<1/n\).
	 By compactness of \(\Sigma^{m_p}\) there exists a subsequence $\{n_k\}$ and \(\omega\in\Sigma^{m_p}\) such that  \(\omega^{n_k}\to\omega\). Since, by Lemma \ref{lem:compact_attractor}, \(\Theta_{\! f^p}\) is continuous we have
	     \[ 
	    d_0(x, \Theta_{\! f^p}(\omega))= \lim_{k\to\infty} d_0(x, \Theta_{\! f^p}(\omega^{n_k}))=\lim_{k\to\infty} d_0(x,f^{pn_k}(y_{n_k}))=0. 
		 \]
Hence,  by definition of the attractor \(x\in \Lambda(\Phi_{f^p})\).
 \end{proof}
 \subsection{A simple perturbation and the Proof of Proposition \ref{prop:markov-dense}}\ \\
 \label{sec:4.2}
 For \(\delta\in\bR^d\) with \(|\delta|>0\) sufficiently small, and a piecewise contraction \(f\) with IFS \(\Phi_{\! f}=\{\phi_1,\phi_2,\ldots,\phi_m\}\), we define perturbations \(f^{\delta}, \Phi_{f^\delta}\) as follows:
 \begin{equation}\label{eq:defn_pert}
     f^{\delta}(x)=f(x)+\delta,\ \phi_i^{\delta}=\phi_i+\delta.
 \end{equation}
Provided \(\abs{\delta}\) is small enough, the perturbation \(f^\delta\) satisfies \(\overline{f^\delta(X)}\subset\mathring X\), hence \(f^\delta\) is a piecewise smooth contraction with corresponding IFS \(\Phi_{f^\delta}=\{\phi^\delta_1,\phi^\delta_2,\ldots,\phi^\delta_m\}\). One can easily check that \(d_2(f,f^\delta)=|\delta|\). By definition, see (\ref{eq:defn_psi}), \(\Theta_{\! f^\delta}:\Sigma^m\to Y\) reads 
         \[
	     \Theta_{\! f^{\delta}}(\sigma)= \bigcap_{n \in\bN}{\phi_{\sigma_1}^{\delta}\circ\phi_{\sigma_2}^{\delta}\circ\cdots\circ\phi_{\sigma_n}^{\delta}(Y)}
		 \]
	 and the respective attractor is \({\Lambda}(\Phi_{f^\delta})= \bigcup_{\sigma\in\Sigma^m}\Theta_{\! f^{\delta}}(\sigma)\).\\
     Observe that for any \(p\in\bN\), the corresponding IFS associated to \((f^{\delta})^p\) is given by \(\Phi_{(f^\delta)^p}=\{\varphi_1^\delta, \varphi_2^\delta, \ldots, \varphi_{m_p}^\delta\}\) where for every \(i\in\{1,2,\ldots,m_p\}\) there exists \(\sigma_i\in\Sigma^m_p\) such that \(\varphi_i^\delta= \phi_{\sigma_1^i}^\delta\circ \phi_{\sigma_2^i}^\delta\circ \ldots\circ \phi_{\sigma_p^i}^\delta\).
 \begin{lem}
The map	 \(\Theta_{\! f^{\delta}}(\sigma)\mapsto \bigcap_{n \in\bN}{\phi_{\sigma_1}^{\delta}\circ\phi_{\sigma_2}^{\delta}\circ\cdots\circ\phi_{\sigma_n}^{\delta}(Y)}\) is uniformly Lipschitz continuous in \(\delta\), that is, there exists \(a>0\) such that, for all $\sigma\in\Sigma^m$, \(d_0(\Theta_{\! f^{\delta}}(\sigma),\Theta_{\! f^{\delta^{\prime}}}(\sigma))\leq a d_0(\delta,\delta^{\prime})\).
 \end{lem}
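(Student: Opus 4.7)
The plan is to prove the Lipschitz estimate by a direct telescoping argument on the defining sequence of iterated compositions, exploiting that the perturbation enters additively and the underlying maps are uniformly $\lambda$-contracting.

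First I would fix $\delta_0>0$ small enough that there is a compact ball $Y^\star\supset Y$ with $\phi_i^\delta(Y^\star)\subset Y^\star$ for every $i\in\{1,\dots,m\}$ and every $|\delta|\leq \delta_0$; such a $Y^\star$ exists by the same argument as in \eqref{eq:Y}, using $\|\phi_i^\delta(0)\|\leq M+|\delta|$. This ensures that, for $|\delta|,|\delta'|\leq \delta_0$, the nested intersection defining $\Theta_{\! f^\delta}(\sigma)$ can be taken inside the common set $Y^\star$, and hence $\Theta_{\! f^\delta}(\sigma)=\lim_{n\to\infty}\phi^\delta_{\sigma_1}\circ\cdots\circ\phi^\delta_{\sigma_n}(y_0)$ for any fixed $y_0\in Y^\star$.

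Next, for fixed $\sigma\in\Sigma^m$ and $y\in Y^\star$, I would set
\[
F_n^\delta:=\phi^\delta_{\sigma_1}\circ\phi^\delta_{\sigma_2}\circ\cdots\circ\phi^\delta_{\sigma_n}(y),
\]
and prove by induction on $n$ the estimate
\[
d_0(F_n^\delta,F_n^{\delta'})\leq \sum_{k=0}^{n-1}\lambda^k\, d_0(\delta,\delta')\leq \frac{1}{1-\lambda}\,d_0(\delta,\delta').
\]
The inductive step rests on the identity
\[
\phi^{\delta}_{\sigma_1}(a)-\phi^{\delta'}_{\sigma_1}(b)=\bigl[\phi_{\sigma_1}(a)-\phi_{\sigma_1}(b)\bigr]+(\delta-\delta'),
\]
applied to $a=\phi^\delta_{\sigma_2}\circ\cdots\circ\phi^\delta_{\sigma_n}(y)$ and the analogous $b$, combined with $\|D\phi_{\sigma_1}\|\leq\lambda$ and the triangle inequality.

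Finally, I would let $n\to\infty$; the left-hand side converges to $d_0(\Theta_{\! f^\delta}(\sigma),\Theta_{\! f^{\delta'}}(\sigma))$ by continuity of the distance, while the right-hand side is already uniform in $n$ and in $\sigma$. This yields
\[
d_0(\Theta_{\! f^\delta}(\sigma),\Theta_{\! f^{\delta'}}(\sigma))\leq \frac{1}{1-\lambda}\,d_0(\delta,\delta'),
\]
so $a=(1-\lambda)^{-1}$ works. There is no substantive obstacle here; the only minor technicality is the choice of the common invariant set $Y^\star$ so that the iterated intersections genuinely converge to a single point for both $\delta$ and $\delta'$, which is handled in the first step.
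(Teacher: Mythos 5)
Your proof is correct and follows essentially the same route as the paper: both arguments peel off the outermost map one at a time, using that the additive perturbation contributes $d_0(\delta,\delta')$ at each level while the remainder is contracted by $\lambda$, and then sum the resulting geometric series to obtain $a=(1-\lambda)^{-1}$. Your extra care in enlarging $Y$ to a common invariant set $Y^\star$ is a reasonable technical tidying, but it does not change the structure of the argument.
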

 \begin{proof}
	 Let \(\delta,\delta^\prime>0\), $n\in\bN$, \(x\in Y\), and  \(\sigma=(\sigma_1,\sigma_2,\ldots,\sigma_n,\ldots)\), then
 \begin{align*}
	 d_0(\Theta_{\! f^{\delta}}(\sigma), &\Theta_{\! f^{\delta^\prime}}(\sigma))\leq d_0(\Theta_{\! f^{\delta}}(\sigma),\phi_{\sigma_1}^{\delta}\circ \ldots \circ \phi_{\sigma_n}^{\delta}(x) )+d_0(\Theta_{\! f^{\delta^\prime}}(\sigma),\phi_{\sigma_1}^{\delta^{\prime}}\circ \ldots \circ \phi_{\sigma_n}^{\delta^{\prime}}(x))\\
	 &+d_0(\phi_{\sigma_1}^{\delta}\circ \ldots \circ \phi_{\sigma_n}^{\delta}(x),\phi_{\sigma_1}^{\delta^{\prime}}\circ \ldots \circ \phi_{\sigma_n}^{\delta^{\prime}}(x))\\
	 &\leq \lambda^nd_0(\Theta_{\! f^{\delta}}(\tau^n\sigma),x)+ \lambda^n d_0(\Theta_{\! f^{\delta^\prime}}(\tau^n\sigma),x)\\
	 &+d_0(\delta,\delta^{\prime})
	 +d_0(\phi_{\sigma_1}\circ \phi_{\sigma_2}^{\delta}\circ \ldots \circ \phi_{\sigma_n}^{\delta}(x),\phi_{\sigma_1}\circ \phi_{\sigma_2}^{\delta^\prime}\circ \ldots \circ \phi_{\sigma_n}^{\delta^{\prime}}(x))\\
	 &\leq 2\lambda^n\operatorname{diam}(Y) +d_0(\delta,\delta^{\prime})+\lambda d_0(\phi_{\sigma_2}^{\delta}\circ \ldots \circ \phi_{\sigma_n}^{\delta}(x),\phi_{\sigma_2}^{\delta^\prime}\circ \ldots \circ \phi_{\sigma_n}^{\delta^{\prime}}(x)).
 \end{align*}
Iterating the above argument yields
\[
\begin{split}
 d_0(\Theta_{\! f^{\delta}}(\sigma), \Theta_{\! f^{\delta^\prime}}(\sigma))
 &\leq \lim_{n\to\infty}\left\{2\lambda^n\operatorname{diam}(Y)+ d_0(\delta,\delta^{\prime})(1+\lambda+\lambda^2+\ldots+ \lambda^n)\right\}\\
 &=(1-\lambda)^{-1}d_0(\delta,\delta^{\prime}),
\end{split}
\] 
letting \(a=1/(1-\lambda)\) concludes the proof.
 \end{proof}
 \begin{proof}[\bf \em Proof of Proposition~\ref{prop:markov-dense}]
Let $p\in\bN$ be such that $m_p\lambda^p\leq \frac 12$. 
Lemma~\ref{lem:att-IFS} asserts that \(\Lambda(f^\delta)\subset \Lambda(\Phi_{(f^\delta)^p})\). Hence, by Theorem \ref{thm:eq}, it suffices to prove that, for every \(\ve>0\) small enough, there exists \(\delta\in B_{\ve}(0)\) such that the attractor \(\Lambda(\Phi_{(f^\delta)^p})\) is disjoint from \(\partial \bsP(f^\delta)=\partial \bsP(f)\).\\
Suppose to the contrary that for every \(\delta\in B_{\ve}(0)\), \(\Lambda(\Phi_{(f^\delta)^p})\cap\partial\bsP(f)\neq \emptyset\). Accordingly, there exists \(\omega(\delta)\in \Sigma^{m_p}\) for which \(\Theta_{\! (f^\delta)^{p}}(\omega(\delta))\in \partial\bsP(f)\).
By definition, \(\partial \bsP(f)=\cup_{P\in\bsP(f)}\partial P\), therefore there exists \(P_i\in\bsP(f)\) and \(A\subset B_{\ve}(0)\) with\footnote{\(\mu_d\) is the \(d\)-dimensional Lebesgue measure,} \(\mu_d(A)\geq \mu_d(B_{\ve}(0))/m = C_d\ve^d/m\),  such that, for all \(\delta \in A,\  \Theta_{\! (f^\delta)^p}(\omega(\delta))\in \partial P_i \). \\
Moreover, for each $k\in\bN$, there exist \(\omega^*= (\omega^*_1, \omega^*_2,\ldots,\omega^*_k)\in \Sigma^{m_p}_k\) such that the set defined as 
\[
A_k(\omega^*)= \{\delta\in A: \omega(\delta)_j=\omega^*_j,j\leq k\}
\]
		 is non empty and \(\mu_d(A_k(\omega^*))\geq \mu_d(A)/m_p^k\geq C_d\ve^dm^{-1}{m_p}^{-k}\). Accordingly, for \(\omega(\delta)\in A_k(\omega^*)\) and IFS associated to \(\Phi_{(f^\delta)^p}=\{\vf_1^\delta,\vf_2^\delta,\ldots,\vf_{m_p}^\delta\}\),
 \begin{align*}
		 \partial P_i\ni \Theta_{\! (f^\delta)^p}(\omega(\delta))&=\varphi_{\omega^*_1}^{\delta}\circ\Theta_{\! (f^\delta)^p}(\tau \omega(\delta))= \varphi_{\omega^*_1}^{\delta}\circ\varphi_{\omega^*_2}^{\delta}\circ\cdots \circ\varphi_{\omega^*_k}^{\delta}\circ\Theta_{\! (f^\delta)^p}(\tau^k\omega(\delta))
 \end{align*}
		 where \(\tau\) is the left shift as defined above, and	 \(\varphi_{\omega_j^*}^\delta= \phi^\delta_{\sigma^p_{j,1}}\circ \phi_{\sigma^p_{j,2}}^\delta\circ\cdots\circ \phi_{\sigma^p_{j,p}}^\delta\), for  \(\omega^*_j=(\sigma^p_{j,1},\sigma^p_{j,2},\ldots,\sigma^p_{j,p})\in \Sigma^m_p\) with \(\phi^\delta_{\sigma^p_{j,s}}\in \Phi_{f^\delta}\).\\
	Next, for some \(\bar{x}\in X\), define \(\theta:A_k(\omega^*)\to X\) as \(\theta(\delta)=\varphi_{\omega^*_1}^{\delta}\circ\varphi_{\omega^*_2}^{\delta}\circ\cdots\circ \varphi_{\omega^*_k}^{\delta}(\bar{x})\) then
	\[
		 d_0(\Theta_{\! (f^\delta)^p}(\omega(\delta)),\theta(\delta))\leq\lambda^{pk}d_0(\Theta_{\! (f^\delta)^p}(\tau^k\omega(\delta)),\bar{x})\leq \operatorname{diam}(Y)\lambda^{pk}=:B_*\lambda^{pk}.
	\]
	Thus, \(\theta(\delta)\) belongs to a \(B_*\lambda^{pk}\) neighbourhood of \(\partial P_i\). Since $\partial P_i$ is contained in the union of finitely many $\cC^2$ manifolds,  the Lebesgue measure of a \(B_*\lambda^{pk}\) neighbourhood of \(\partial P_i\)  is bounded above by \(C\lambda^{pk} \mu_{d-1}(\partial P_i)\) for a fixed constant \(C>0\). Accordingly,
 \begin{equation}\label{eq:theta1}
	\mu_d(\theta(A_k(\omega^*)))\leq C\lambda^{pk} \mu_{d-1}(\partial P_i).
 \end{equation}
	On the other hand, 
			 \[
			 \mu_d(\theta(A_k(\omega^*)))=\int_{A_k(\omega^*)}|\det(D\theta(\delta))|d\delta
			 \] 
		 where by definition of \(\theta(\delta)\),  \(D\theta(\delta)=\Id+D\varphi_{\sigma_1^*}+D\varphi_{\sigma_1^*}D\varphi_{\sigma_2^*}+\cdots+D\varphi_{\sigma_1^*}\cdots D\varphi_{\sigma_{k-1}^*}\). Note that
	\[
		 \norm{D\varphi_{\sigma_1^*}+D\varphi_{\sigma_1^*}D\varphi_{\sigma_2^*}+\cdots+D\varphi_{\sigma_1^*}\cdots D\varphi_{\sigma_{k-1}^*}}\leq \frac{\lambda^p}{1-\lambda^p}
	\]
	where \(\norm{\cdot}\) is the standard operator norm defined as \(\norm{L}=\sup_{\|v\|=1}\norm{Lv}\) for any linear operator \(L:\bR^d\to\bR^d\). It follows that, for all $v\in\bR^d$, 
	\[
	\|D\theta(\delta)v\|\geq \|v\|-\frac{\lambda^p}{1-\lambda^p}\|v\|\geq\frac{1}{2}\|v\|
	\]
	since  \(\lambda^p\leq \frac{1}{2m^p}\) and \(m^p\geq2\). Hence the eigenvalues of $D\theta(\delta)$ are larger, in modulus, than $\frac 12$. Accordingly,
	 \(\left|\det(D\theta(\delta))\right|\geq 2^{-d}\)  and 
 \begin{equation}\label{eq:theta2}
			 \mu_d(\theta(A_k(\omega^*)))\geq 2^{-d} \mu_d(A_k(\omega^*))\geq C_d 2^{-d}\ve^dm^{-1}{m_p}^{-k}.
 \end{equation}
	which, for $k$ large enough, is in contradiction with \eqref{eq:theta1}, concluding the proof.
 \end{proof}
\subsection{Fixed points in a generic position}\label{sec:fixed}\ \\

Fix $N\in\bN$. For all $q\leq N+1$ and $\sigma=(\sigma_1,\ldots,\sigma_q)\in \Sigma^m_{q}(\Phi)$, let $x_\sigma(\Phi)$ be the unique fixed point of $\phi_{\sigma_1}\circ\cdots\circ \phi_{\sigma_q}$, that is
 \begin{equation}\label{eq:fix-def}
 \phi_{\sigma_1}\circ\cdots\circ \phi_{\sigma_q}(x_\sigma(\Phi))=x_\sigma(\Phi).
 \end{equation}
The goal of this section is to define a perturbation that puts the above fixed points in a generic position. We start with the following trivial but useful fact concerning the location of such fixed points.

 \begin{lem}\label{lem:fix}
Given an IFS $\Phi$, if for some $y\in\bR^d, \delta>0$, $p\in\bN$ and $\sigma\in\Sigma^m_p$, 
\[
\phi_{\sigma_1}\circ\cdots\circ \phi_{\sigma_p}(B_\delta(y))\cap B_\delta(y)\neq \emptyset,
\]
then the unique fixed point of $\phi_{\sigma_1}\circ\cdots\circ \phi_{\sigma_p}$ belongs to $ B_{c_*\delta}(y)$,  $c_*=\frac{2}{1-\lambda}>2$.
\end{lem}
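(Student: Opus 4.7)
The plan is a short direct calculation using only the contraction property. Set $F := \phi_{\sigma_1}\circ\cdots\circ \phi_{\sigma_p}$; since each $\phi_i$ is a $\lambda$-contraction with $\lambda<1$, so is $F$ (in fact a $\lambda^p$-contraction), and hence by the Banach fixed point theorem $F$ admits the unique fixed point $x_\sigma := x_\sigma(\Phi)$ with $F(x_\sigma)=x_\sigma$. This is the only background tool needed.

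First I would extract from the hypothesis a point $z\in B_\delta(y)$ whose image satisfies $F(z)\in B_\delta(y)$, i.e.\ both $|z-y|<\delta$ and $|F(z)-y|<\delta$. Then a single use of the contraction inequality $|F(z)-x_\sigma|=|F(z)-F(x_\sigma)|\leq \lambda|z-x_\sigma|$, combined with the triangle inequality applied twice, gives the linear recursion
\[
|y-x_\sigma|\leq |y-F(z)|+|F(z)-x_\sigma|\leq \delta+\lambda|z-x_\sigma|\leq \delta+\lambda\bigl(\delta+|y-x_\sigma|\bigr),
\]
which rearranges to $(1-\lambda)|y-x_\sigma|\leq (1+\lambda)\delta$, i.e.\ $|y-x_\sigma|\leq \frac{1+\lambda}{1-\lambda}\delta$. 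Since $\frac{1+\lambda}{1-\lambda}<\frac{2}{1-\lambda}=c_*$ whenever $\lambda\in(0,1)$, we conclude $x_\sigma\in B_{c_*\delta}(y)$, as required.

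There is no substantive obstacle: the statement is simply a quantitative form of the elementary observation that the fixed point of a contraction lies close to any short orbit segment that stays close to itself. The one remark worth making is that the constant $c_*=\frac{2}{1-\lambda}$ recorded in the statement is slightly lossy — the argument in fact produces the sharper bound $\frac{1+\lambda}{1-\lambda}$ — but presenting it in the looser form has the advantage of making the $\lambda\to 1$ blow-up transparent and leaves a little room that may be convenient when the lemma is invoked in the subsequent perturbation arguments.
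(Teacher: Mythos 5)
Your proof is correct, and it follows a mildly different route from the paper. The paper first shows (using the implicit estimate $\|F(y)-y\|\le 2\delta$, which follows from the hypothesis by the same triangle-inequality-plus-contraction step you use) that the larger ball $B_{c_*\delta}(y)$ is $F$-invariant, namely $\|F(x)-y\|\le(\lambda^p c_*+2)\delta\le c_*\delta$ for $x\in B_{c_*\delta}(y)$, and then deduces the location of the fixed point from the Banach contraction mapping theorem applied on that invariant ball. You instead pin down the already-known fixed point $x_\sigma$ directly by the linear recursion $|y-x_\sigma|\le\delta+\lambda(\delta+|y-x_\sigma|)$, which is a bit more economical and yields the sharper constant $\frac{1+\lambda}{1-\lambda}<c_*$. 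Both arguments are elementary and exploit exactly the same inequality; the paper's invariant-ball phrasing has the cosmetic advantage of producing the constant $c_*=\frac{2}{1-\lambda}$ in the same form it is reused later, but nothing is lost by your more direct estimate. One small point worth being explicit about in your writeup: $F$ is a $\lambda^p$-contraction, and you silently replace $\lambda^p$ by the weaker $\lambda$; this is harmless since $\lambda^p\le\lambda$, but it deserves a word.
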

\begin{proof} The fact
 \(\|\phi_{\sigma_1}\circ\cdots\circ \phi_{\sigma_p}(y)-y\|\leq 2\delta\)
 implies that, for each $x\in B_{c_*\delta}(y)$, 
 \[
 \begin{split}
 \|\phi_{\sigma_1}\circ\cdots\circ \phi_{\sigma_p}(x)-y\|&\leq \|\phi_{\sigma_1}\circ\cdots\circ \phi_{\sigma_p}(x)-\phi_{\sigma_1}\circ\cdots\circ \phi_{\sigma_p}(y)\|+2\delta\\
 &\leq (\lambda^p c_*+2)\delta\leq c_*\delta.
 \end{split}
 \]
 Hence  $\phi_{\sigma_1}\circ\cdots\circ \phi_{\sigma_p}(B_{c_*\delta}(y))\subset B_{c_*\delta}(y)$. The Lemma follows by the contraction mapping theorem.
 \end{proof}
Let $\cA_p=\{\phi_{\sigma_{1}}\circ\cdots\circ \phi_{\sigma_p}\;:\;\sigma\in \Sigma^{m}_{p}\}$ and $A_p(\Phi):=\{x_\sigma(\Phi)\;:\; \sigma\in  \Sigma^m_{p}(\Phi)\}$ and let \(\#A_{N}(\Phi)=k_\star\). We can now explain what we mean by having the fixed points in a generic position.
 \begin{prop} \label{prop:fix-sep}
 For each $\ve>0$ there exists an $\ve$-perturbation $\Phi^0=\{\phi^0_k\}$ of $\Phi$ such that for all $q\leq p\leq N$, $\sigma\in  \Sigma^m_{q}(\Phi)$, $\omega\in  \Sigma^m_{p}(\Phi)$, $\omega\neq \sigma$, if $\sigma_{p+1}\neq \omega_q$ then $x_{\sigma}(\Phi^0)\neq x_{\omega}(\Phi^0)$, while if $\sigma_{p+1}=\omega_q$ and $x_{\sigma}(\Phi^0)= x_{\omega}(\Phi^0)$ then $\phi_{\sigma_{1}}\circ\cdots\circ \phi_{\sigma_q}$ and $\phi_{\omega_{1}}\circ\cdots\circ \phi_{\omega_p}$ are both some power of a $\Theta\in\bigcup_{s=1}^p \cA_s$.
 \end{prop}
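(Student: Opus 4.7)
The strategy is to construct $\Phi^0$ by a translational perturbation. Consider the family $\Phi^\delta=\{\phi_i+\delta_i\}_{i=1}^m$ parametrised by $\delta=(\delta_1,\ldots,\delta_m)\in(\bR^d)^m$ with $|\delta|<\ve$. Since each composition $\alpha^\delta_\sigma:=\phi^\delta_{\sigma_1}\circ\cdots\circ\phi^\delta_{\sigma_q}$ remains a $\lambda^q$-contraction, applying the implicit function theorem to the equation $\alpha^\delta_\sigma(x)=x$ (with $I-D\alpha^\delta_\sigma$ invertible because $\|D\alpha^\delta_\sigma\|\le\lambda^q<1$) shows that each fixed point $\delta\mapsto x_\sigma(\Phi^\delta)$ is real-analytic on a small ball around the origin.

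There are only finitely many pairs $(\sigma,\omega)$ with $|\sigma|=q\le p=|\omega|\le N$, so it suffices, for each pair falling under the first case ($\sigma_{p+1}\ne\omega_q$), to show that the real-analytic map
\[
F_{\sigma,\omega}(\delta):=x_\sigma(\Phi^\delta)-x_\omega(\Phi^\delta)\in\bR^d
\]
is not identically zero near $\delta=0$. Its vanishing set is then a proper real-analytic subvariety of $(\bR^d)^m$, hence Lebesgue-negligible; I would then pick $\delta$ with $|\delta|<\ve$ outside the finite union of these bad sets and set $\Phi^0=\Phi^\delta$. For pairs in the second case ($\sigma_{p+1}=\omega_q$), the contrapositive has to be established: if $F_{\sigma,\omega}$ still vanishes identically in $\delta$, then $\alpha_\sigma$ and $\alpha_\omega$ must both be iterates of a common $\Theta\in\bigcup_{s\le p}\cA_s$.

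The core computation is the first-order expansion, obtained by implicit differentiation:
\[
\partial_{\delta_i}x_\sigma\big|_{\delta=0}=\bigl(I-D\alpha_\sigma(x_\sigma)\bigr)^{-1}\sum_{k\,:\,\sigma_k=i} D\phi_{\sigma_1}(y_1)\cdots D\phi_{\sigma_{k-1}}(y_{k-1}),
\]
where $y_j:=\phi_{\sigma_{j+1}}\circ\cdots\circ\phi_{\sigma_q}(x_\sigma)$; an analogous formula holds for $\partial_{\delta_i}x_\omega$. To guarantee $F_{\sigma,\omega}\not\equiv 0$ I would select, for each bad pair, a letter $i$ whose occurrence patterns in $\sigma$ and $\omega$ are incompatible with both compositions being powers of a common word, so that the two sums of strictly contracting products above cannot match as vectors of $\bR^d$; the dominant terms in the two sums come from the earliest occurrences of $i$ and differ by a non-conjugate product of $D\phi_j$'s.

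The main obstacle is the converse step in the second case, namely translating the identities $\partial_{\delta_i}F_{\sigma,\omega}\equiv0$ ($i=1,\ldots,m$) into the combinatorial conclusion that $\alpha_\sigma$ and $\alpha_\omega$ share a common root. I would approach this by Euclidean division on word lengths: writing $p=aq+r$, the case $r=0$ forces $\omega=\sigma^a$, so $\Theta=\alpha_\sigma$ works; when $0<r<q$, the compatibility condition $\sigma_{p+1}=\omega_q$ combined with the agreement of first-order variations implies that the initial length-$q$ block of $\omega$ coincides with $\sigma$, reducing the problem to the pair formed by $\sigma$ and the length-$r$ suffix of $\omega$, on which one inducts on $p+q$. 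This produces the minimal common word $\Theta\in\bigcup_{s\le p}\cA_s$ and completes the argument.
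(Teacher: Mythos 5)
Your plan rests on two claims that do not hold, so there is a genuine gap. First, the map $\delta\mapsto x_\sigma(\Phi^\delta)$ is \emph{not} real-analytic: the $\phi_i$ are only assumed $\cC^3$, and the implicit function theorem preserves regularity rather than improving it, so $x_\sigma(\Phi^\delta)$ is merely $\cC^3$ in $\delta$. Consequently, the reasoning ``$F_{\sigma,\omega}\not\equiv 0$ near $0$, hence its zero set is a proper real-analytic subvariety and Lebesgue-negligible'' is unavailable. One could instead try to show $\nabla F_{\sigma,\omega}(0)\neq 0$ and invoke the implicit function theorem to deduce the zero set is a $\cC^1$ hypersurface, but you never establish non-vanishing of the gradient.

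That is the second and more serious problem. Translating $\phi_i\mapsto\phi_i+\delta_i$ moves \emph{both} $x_\sigma(\Phi^\delta)$ and $x_\omega(\Phi^\delta)$ whenever the letter $i$ appears in both words, and the two first-order responses
\[
\partial_{\delta_i}x_\sigma\big|_{0}=\bigl(I-D\alpha_\sigma\bigr)^{-1}\!\!\sum_{k:\sigma_k=i}\!D\phi_{\sigma_1}(y_1)\cdots D\phi_{\sigma_{k-1}}(y_{k-1}),
\qquad
\partial_{\delta_i}x_\omega\big|_{0}=\bigl(I-D\alpha_\omega\bigr)^{-1}\!\!\sum_{k:\omega_k=i}\!\cdots
\]
are sums of products of derivative matrices evaluated at different orbit points. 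You assert they ``cannot match'' because the ``dominant terms'' differ by a non-conjugate product, but none of the terms is genuinely dominant (the $k=1$ term, when present, is $I$; later terms are $O(\lambda^{k-1})$ and need not be negligible), and there is no structural obstruction preventing the two sums from coinciding as vectors for a specific $\Phi$. For $\cC^3$ data one then has no fallback: if the first-order variation vanishes, higher-order Taylor coefficients give no conclusion. The paper sidesteps this entirely by using a \emph{local} bump perturbation $h_{z,\delta}$ supported in a small ball around the coincident fixed point $z$, and proving (Sub-Lemma~\ref{sublem:uff}) that along one of the two words the orbit never re-enters that ball; consequently only one of the two fixed points moves, and the movement is explicitly nonzero from the derivative formula. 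Your translation-based perturbation lacks this one-sidedness, which is exactly what makes the non-degeneracy provable. The reduction in your second case (Euclidean division on word lengths) is plausible in outline but also unargued; the paper instead peels off the common suffix $\sigma_{p+1}=\omega_q$ and reduces to the first case, using the inductive hypothesis and the localization machinery.
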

 \begin{proof}
 We proceed by induction on $p$. If $\sigma_1,\omega_1\in \{1,\dots,m\}$ and $x_{\sigma_1}(\Phi)=x_{\omega_1}(\Phi)$, then we can simply make the perturbation $\tilde \phi_{\sigma_1}(x)=\phi_{\sigma_1}(x)+\eta$ for some $\|\eta\|<\ve/2$. This proves the statement for $p=1$. To ease the notation we keep calling $\Phi$ the perturbed IFS. Let us suppose the statement is true for $p$, after a perturbation of size at most $(1-2^{-p})\ve$, and prove it for $p+1$. \\
Since $A(p+1)=\{x_\sigma(\Phi)\;:\; \sigma\in  \Sigma^m_{p+1,*}(\Phi)\}$ is a finite discrete set we have 
\[
\delta^*_{p+1}=\min\left\{1,\inf_{\substack{x,y\in A(p+1)\\ x\neq y}}\|x-y\|\right\}>0.
\]
Let $\delta\leq c_*^{-2}\delta^*_{p+1}/2$.\footnote{Recall that $c_*=\frac{2}{1-\lambda}$.}
 Suppose that  $z:=x_\sigma(\Phi)=x_\omega(\Phi)$ and $\omega_{q}=\sigma_{p+1}$, where $\sigma\in \Sigma^m_{p+1}(\Phi)$, $\omega\in \Sigma^m_{q}(\Phi)$, $q\leq p+1$ and $\sigma\neq \omega$. Let $j\in\{0,\dots, q-1\}$ be the largest integer such that $\sigma_{p+1-j}=\omega_{q-j}$. If $j=q-1$, then it must be $q\leq p$, otherwise we would have $\omega=\sigma$.
 Hence
 \[
\phi_{\omega_{1}}\circ \cdots\circ\phi_{\omega_q}(z)=z=\phi_{\sigma_1}\circ \cdots \circ\phi_{\sigma_{p+1}}(z)=\phi_{\sigma_1}\circ \cdots \circ\phi_{\sigma_{p+1-q}}\circ \phi_{\omega_{1}}\circ \cdots\circ\phi_{\omega_q}(z).
 \]
 That is $\phi_{\sigma_1}\circ \cdots \circ\phi_{\sigma_{p+1-q}}(z)=z$. It follows by the inductive hypothesis that there exists $k\in\bN$ and $\Theta\in \cA_s$, $s\leq q$, such that
 \[
 \begin{split}
& \phi_{\sigma_{1}}\circ \cdots \circ\phi_{\sigma_{p+1-q}}=\Theta^k\\
&\phi_{\omega_{1}}\circ \cdots\circ\phi_{\omega_q}=\Theta^j
\end{split}
\]
 so, $\phi_{\sigma_1}\circ \cdots\circ\phi_{\sigma_{p+1}}=\Theta^{k+j}$, as claimed. It remains to consider the case $j<q-1$.
Let $\psi:= \phi_{\omega_{q-j}}\circ \cdots\circ\phi_{\omega_q}$, then we have $\psi=\phi_{\sigma_{p+1-j}}\circ\cdots\circ \phi_{\sigma_{p+1}}$, and
 \[
\left[\psi\circ \phi_{\omega_1}\circ \cdots \circ \phi_{\omega_{q-j-1}}\right]( \psi(z))=\psi(z)= \left[\psi\circ \phi_{\sigma_1}\circ \cdots \circ\phi_{\sigma_{p-j}}\right](\psi(z)),
 \]
 where, by construction, $\omega_{q-j-1}\neq \sigma_{p-j}$. By renaming the indices, we are thus reduced to the case $\omega_{q}\neq \sigma_{p+1}$. The following lemma is useful to analyse this case.
\begin{sublem}\label{sublem:uff}
If, for $j\in\{1,\dots, p\}$,
 \[
\phi_{\sigma_{j+1}}\circ\cdots \circ \phi_{\sigma_{p+1}}(B_\delta(z))\cap B_\delta(z)\neq \emptyset,
 \]
 then $\phi_{\sigma_{j+1}}\circ\cdots \circ \phi_{\sigma_{p+1}}(z)=z$ and $\sigma_j=\sigma_{p+1}$, and the same for $\omega$.
\end{sublem}
\begin{proof}
Lemma \ref{lem:fix} implies that there exists $z_1\in B_{c_*\delta}(z)$ such that $\phi_{\sigma_{j+1}}\circ\cdots \circ \phi_{\sigma_{p+1}}(z_1)=z_1$. In addition,
\[
\|z-\phi_{\sigma_{1}}\circ\cdots \circ \phi_{\sigma_{j}}(z_1)\|=\|\phi_{\sigma_{1}}\circ\cdots \circ \phi_{\sigma_{p+1}}(z)-\phi_{\sigma_{1}}\circ\cdots \circ \phi_{\sigma_{p+1}}(z_1)\|\leq\lambda c_*\delta.
\]
Thus, $\phi_{\sigma_{1}}\circ\cdots \circ \phi_{\sigma_{j}}(B_{c_*\delta}(z))\cap B_{c_*\delta}(z)\neq \emptyset$, hence  Lemma \ref{lem:fix} implies that there exists $z_2\in B_{c_*^2\delta}(z)$ such that 
$\phi_{\sigma_{1}}\circ\cdots \circ \phi_{\sigma_{j}}(z_2)=z_2$. The definition of $\delta$ implies that $z_1=z_2$, which in turn implies $z_1=z$. Hence, we have 
\[
\begin{split}
&\phi_{\sigma_{1}}\circ\cdots \circ \phi_{\sigma_{j}}(z)=z\\
&\phi_{\sigma_{j+1}}\circ\cdots \circ \phi_{\sigma_{p+1}}(z)=z
\end{split}
\]
and, by the inductive hypothesis, this is possible only if $\sigma_{j}=\sigma_{p+1}$. The argument for $\omega$ is identical.
\end{proof}
If there exists $j\in\{1,\dots, p\}$ and $k\in\{1,\dots, q\}$ such that
\begin{equation}\label{eq:repeat}
\begin{split}
&\phi_{\sigma_{j+1}}\circ\cdots \circ \phi_{\sigma_{p+1}}(B_\delta(z))\cap B_\delta(z)\neq \emptyset\\
&\phi_{\omega_{k+1}}\circ\cdots \circ \phi_{\omega_{q}}(B_\delta(z))\cap B_\delta(z)\neq \emptyset
\end{split}
\end{equation}
then, by Sub-Lemma \ref{sublem:uff}, $\phi_{\sigma_{j+1}}\circ\cdots \circ \phi_{\sigma_{p+1}}(z)=z=\phi_{\omega_{k+1}}\circ\cdots \circ \phi_{\omega_{q}}(z)$ which contradicts our inductive hypothesis. Thus, if the first inequality is satisfied for some $j$, the second cannot be satisfied for any $k$ and vice versa. Let us suppose that there does not exist $k$ for which the second inequality of \eqref{eq:repeat} is satisfied (the other possibility being completely analogous).

Define the perturbation
\[
\tilde\phi_k=\begin{cases}
\phi_k\circ h_{z,\delta} &\textrm{ if } k=\omega_{q}\\
\phi_k&\textrm{ otherwise.} 
\end{cases}
\]
where, in analogy with \eqref{eq:hdef}, for $v\in\bR^d$, $\|v\|=1$,
\[
			h_{z,\delta}(x)=\begin{cases}
			x\quad&\forall x\not\in B_{c_*^{-1}\delta}(z)\\
			x+c_*^{-3}\delta^3g(1-c_*\delta^{-1}\|x-z\|) v   &\textrm{ otherwise},
			\end{cases}
\]
Note that $\|h_{z,\delta}-id\|_{\cC^2}\leq 2^{-p-1}\ve_0$. Since $h_{z,\delta}(B_{c_*^{-1}\delta}(z))\subset B_{c_*^{-1}\delta}(z)$ it follows that the effect of the perturbation is always confined to $B_{\delta}(z)$ and its images. Moreover, by Sub-Lemma \ref{sublem:uff}, if there exists $j$ such that $\phi_{\sigma_{j+1}}\circ\cdots \circ \phi_{\sigma_{p+1}}(B_\delta(z))\cap B_\delta(z)\neq\emptyset$, we have seen that $\sigma_{j}=\sigma_{p+1}\neq \omega_q$, so next we apply a map, $\phi_{\sigma_j}$, that has not been modified. On the other hand, if $x\in B_\delta(z)$ and for some $k$ we have $\omega_k=\omega_{q}$, we have that $\phi_{\omega_{k+1}}\circ\cdots\circ\phi_{\omega_q}(x)\not \in B_\delta(z)$. Next, we will apply $\phi_{\omega_k}$ outside the region where it has been modified. It follows that for each $x\in  B_\delta(z)$ we have
\[
\begin{split}
& \tilde\phi_{\omega_{1}}\circ\cdots \circ \tilde\phi_{\omega_{q}}(x)=\phi_{\omega_{1}}\circ\cdots \circ \phi_{\omega_{q}}\circ h_{z,\delta}(x)\\
&\tilde \phi_{\sigma_{1}}\circ\cdots \circ \tilde \phi_{\sigma_{p+1}}(x)=\phi_{\sigma_{1}}\circ\cdots \circ \phi_{\sigma_{p+1}}(x).
\end{split}
\]
Calling $z(\delta)$ the unique fixed point of $\phi_{\omega_{1}}\circ\cdots \circ \phi_{\omega_{q}}\circ h_{z,\delta}$ we have
\[
z'(\delta)=(\Id-D\phi_{\omega_{1}}\cdots D \phi_{\omega_{q}}Dh_{z,v})^{-1}D\phi_{\omega_{1}}\cdots D \phi_{\omega_{q}}\partial_\delta h_{z,\delta}.
\]
Since $z(0)=z$ and
\[
\partial_\delta h_{z,\delta}=3c_*^{-3}\delta^2g(1-c_*\delta^{-1}\|z(\delta)-z\|) v +c_*^{-2}\delta \|z(\delta)-z\|g'(1-c_*\delta^{-1}\|z(\delta)-z\|) v
\]
we have that it is not possible that $z(\delta)=z$ for all $\delta \leq c_*^{-2}\delta^*_{p+1}/2$. Thus we can make a perturbation for which the two fixed points are different, and, for $\delta$ small, they cannot be equal to the other fixed points.

For any other couple of elements \(\sigma\in\Sigma^m_{p+1}(\Phi),\omega\in\Sigma^m_{q}(\Phi)\), we can repeat the same process and obtain the perturbation with two different fixed points as above. Note that the size of perturbation being \(\delta<c_*^{-1}\delta^*_{p+1}/2\), the distance between the newly obtained fixed points in \(\cup_{q\leq p}\cA_q\) stays positive as the perturbation doesn't move the fixed points more than $\delta^*_{p+1}/2$.
 \end{proof}
 
From now on, we assume that $\Phi$ satisfies Proposition \ref{prop:fix-sep}.
\subsection{Pre-images of the boundary manifolds and how to avoid them}\label{sec:manifold}\ \\
Next, we need several notations and a few lemmata to describe the structure of the pre-images of the discontinuity manifolds conveniently. This will allow us to develop the tools to prove Proposition~\ref{prop:psc-dense}. \\
Let $f$ be a piecewise smooth contraction with \(\bsP(f)=\{P_1, P_2,\ldots, P_m\}\) and \(\Phi_{\! f}=\{\phi_1,\phi_2,\ldots,\phi_m\}\).
Recall that, by hypothesis, $\partial \bsP(f)$ is contained in the finite union of $\cC^2$ manifolds, which we will call {\em boundary manifolds}. Let $l_0$ be the number of boundary manifolds in $\partial \bsP(f)$.
Recall also that, for every \(i\in\{1,2,\ldots,m\}\), \(U_{i}\) is the open neighbourhood of \(P_{i}\in\bsP(f)\) such that \(\tilde f|_{U_{i}}\) is injective and hence invertible. Accordingly, by the construction of \(\Phi_{\! f}\), we have that \(\phi_{i}|_{U_{i}}\) has a well defined inverse for all \(i\in\{1,2,\ldots,m\}\). \\
Let $\epsilon_0=\min\{ d_H(P_i,U_i^c)\;:\; i\in\{1,\dots,m\}\}$, where the complement is taken in $\bR^d$. For each $\epsilon\leq \epsilon_0/2$ we can consider the $\epsilon$-neighbourhood $V_i$ of $P_i$ and the $\epsilon/2$ neighbourhood $V_i^-$.\\ 
Choosing $\epsilon$ small enough we can describe the boundary manifolds by embeddings \(\psi_i\in\cC^2(D_i^+, \bR^d)\), 
\(i\in\{1,2,\ldots,l_0\}\), such that  $\overline{\psi_i(D^+_i)}\subset U_{p}$,  for some $p\in\{1,\dots,m\}$, and there exists an open set 
\(D_i\subset D^+_i\subset\bR^{d-1} \) such that $\psi_i(D_i)\cap V_{p}\neq\emptyset$, and $\partial \psi_i(D_i)\cap \overline V_{p}=\emptyset$.\footnote{ This is possible by the Definition \ref{def:psc}.}   
For each IFS $\Phi$ and $\sigma\in \Sigma^m_n$, recalling the definition \eqref{eq:Y} of $Y$, we let
\begin{equation}\label{eq:Ddef}
\begin{split}
&D_{\sigma}(\Phi)=\{x\in Y\;:\; x\in V^-_{\sigma_n}, \phi_{\sigma_{k+1}}\circ\cdots\circ \phi_{\sigma_n}(x)\in V^-_{\sigma_{k}}, 
k\in \{1,\dots, n-1\}\}\\
&D^+_{\sigma}(\Phi)=\{x\in Y: x\in V_{\sigma_n}, \phi_{\sigma_{k+1}}\circ\cdots\circ \phi_{\sigma_n}(x)\in V_{\sigma_{k}}, k\in \{1,\dots, n-1\}\}.
\end{split}
\end{equation}
We call a sequence $\sigma$ {\em admissible} if $D_{\sigma}(\Phi)\neq \emptyset$. We define the set $\Sigma^m_{n,i}$ of the $i$-admissible sequences as
 \begin{equation}\label{eq:admissible}
 \begin{split}
		& \Sigma_{n,i}^m(\Phi)=\big\{\sigma\in\Sigma^{m}_n\;:\; \phi_{\sigma_1}\circ\cdots\circ \phi_{\sigma_n}(D_{\sigma}(\Phi))\cap \psi_i(D_i)\neq\emptyset\big\}\\
	 &D_{\sigma,i}(\Phi)=\{x\in D_{\sigma}(\Phi)\;:\;  \phi_{\sigma_1}\circ\cdots\circ \phi_{\sigma_n}(x)\in\psi_i(D_i)\}.
 \end{split}
 \end{equation}
  \begin{rem}\label{rem:nice_nbd} 
 Note that, for each $N \in\bN$, there is a $\delta>0$ such that, for each $i\in\{1,\dots, l_0\}$, \(n\leq N\), admissible word  \(\sigma\in\Sigma^m_{n,i}\), point  \(x\in D^+_{\sigma}(\Phi_{\! f})\) and small enough perturbations $\tilde\Phi=\{\tilde\phi_i\}$ of  $\Phi$, we have, for each $j\leq n$, \(\tilde \phi_{\sigma_j}\circ\cdots\circ\tilde \phi_{\sigma_n}(B_{\delta}(x))\subset \tilde \phi_{\sigma_{j}}(U_{\sigma_j})\), so that the inverse function $\tilde\phi_{\sigma_n}^{-1}\circ\cdots \circ\tilde \phi_{\sigma_1}^{-1}$ is well defined on \(\tilde\phi_{\sigma_1}\circ\cdots\circ\tilde\phi_{\sigma_n}(B_{\delta}(x))\).
 \end{rem}
 \begin{rem}\label{rem:boundary0} 
 By the Definition~(\ref{eq:n-partn}) of the partition \(\bsP(f^n)\), it follows that  
\[
\partial \bsP(f^n)\subset\bigcup_{i=1}^{l_0}\bigcup_{r=0}^n\bigcup_{\sigma\in\Sigma^m_{r,i}}D_{\sigma,i}(\Phi_{\! f}),
\] 
where $\Sigma^m_{0,i}(\Phi)=\{0\}$ and $\phi_0=id$, so for $\sigma\in\Sigma^m_{0,i}(\Phi)$ we have $D_{\sigma,i}(\Phi_{\! f})=\psi_i(D_i)$.
\end{rem}	
Unfortunately, the sets $D_{\sigma,i}(\Phi_{\! f})$ may have a rather complex topological structure, while we would like to cover $\partial \bsP(f^n)$ with a finite set of $(d-1)$-dimensional manifolds described by a single chart. This is our next task.

In the following, we will write only $\Sigma_{n,i}^m$, if it does not create confusion. In addition, we set 
\begin{equation}\label{eq:sigmazero}
\Sigma_{n,*}^m=\cup_{i=1}^{l_0} \Sigma_{n,i}^m.
\end{equation}
Note that if $\sigma\in\Sigma_{n,*}^m$ and $x\in D_{\sigma,i}(\Phi)$ then there exists a unique $y\in D_i$ such that $\phi_\sigma(x):=\phi_{\sigma_1}\circ\cdots\circ \phi_{\sigma_n}(x)=\psi_i(y)$, so the following is well defined: $\psi_i^{-1}\circ \phi_{\sigma}(D_{\sigma,i}(\Phi))=W_{\sigma,i}$. Note that $W_{\sigma,i}$ is a compact set. For all $y\in D_i$ let $A(y)$ be the set of $\sigma\in \Sigma^m_{p,i}$, $p\leq n$, such that $y\in W_{\sigma,i}$.
As noticed in Remark \ref{rem:nice_nbd}, there exists a $\delta(y)>0$ such that
$\psi_i(B_{\delta(y)}(y))\subset \bigcap_{\sigma\in A(y)}\phi_{\sigma}(D_{\sigma,i}(\Phi))$. For a fixed $N\in\bN$, we have that $\{B_{\delta(y)}(y)\;:\; y\in \overline{D_i}\}$ is a Besicovitch cover and, by Besicovitch covering theorem, we can obtain a subcover in which each point can belong at most at $c_d$ balls (for some $c_d$ depending only on the dimension $d$). We can then extract a finite subcover 
\begin{equation}\label{eq:many-manifolds}
\cW_i^N=\{B_{\delta(y_k)}(y_k)\}
\end{equation}
of $\overline{D_{i}}$. We set, for $\sigma\in\Sigma_{p,i}^{m}$ with $p\in\{0,\dots,N\}$, $M^N_{\sigma,i}(\Phi)=\{\phi_{\sigma_p}^{-1}\circ \cdots\circ\phi_{\sigma_1}^{-1}\circ\psi_i(B_{\delta(y_k)}(y_k))\}$,
$M^N_{0,i}= \{\psi_i(D_i)\}$, this is the wanted collection of $(d-1)$-dimensional manifolds, note that they are not necessarily disjoint. However, they have the wanted property, as the following remark states.
\begin{rem}\label{rem:boundary} 
 By the Definition given by equation~(\ref{eq:n-partn}) of the partition \(\bsP(f^n)\), it follows that, for each $N\geq n$, 
\[
\partial \bsP(f^n)\subset\bigcup_{i=1}^{l_0}\bigcup_{r=0}^n\bigcup_{\sigma\in\Sigma^m_{r,i}}\bigcup_{M\in M^N_{\sigma,i}(\Phi_{\! f})}M,
\] 
where $\Sigma^m_{0,i}=\{0\}$ and $\phi_0=id$, so for $\sigma\in\Sigma^m_{0,i}$ we have $M_{\sigma,i}(\Phi_{\! f})=\psi_i(D_i)$.
\end{rem}

Also, for all IFS $\Phi$ and \(N\in\bN\cup\{0\}\), we define
 \begin{equation}\label{eq:bdset}
			 D^{N}(\Phi)=\bigcup_{n=0}^{N}\bigcup_{i=1}^{l_0}\bigcup_{\sigma\in\Sigma_{n,i}^m}\bigcup_{M\in M^N_{\sigma,i}(\Phi)}  M.
 \end{equation}
Also, for \(\delta>0\), we define  the closure of the \(\delta\)-neighborhood of \(D^{N}(\Phi_{\! f})\) as
 \begin{equation}\label{eq:bdnbd}
		 D_{\delta}^{N}(\Phi)=\bigcup_{x\in D^{N}(\Phi_{\! f})}\overline{B_\delta(x)}.
 \end{equation}

\begin{rem} The basic idea of the proof is to make a perturbation such that the images of \(\partial \bsP(f^n)\) do not self-intersect too many times. This can be easily done for a single map $\phi_i$. However, we are dealing with compositions in which the same map can appear many times. So we have to avoid the possibility that the perturbation at one time interferes with itself at a later time. This can be achieved if there are no fixed points close to the pre-images of the singularities. This is our next task.
\end{rem}

 \begin{prop}\label{prop:bad_points}
	 Let \(\Phi=\{\phi_1,\phi_2,\ldots,\phi_m\}\) be an IFS with contraction coefficient \(\lambda\). For each $N\in\bN$, and $\ve>0$ small enough, there exists an IFS \ \(\tilde \Phi=\{\tilde \phi_1,\ldots,\tilde\phi_m\}\) and \(\delta_\ve\in(0,\ve)\) such that $\|\phi_i-\tilde\phi_i\|_{\cC^2}\leq \ve$, and for any $p\leq N$, \(\sigma=(\sigma_1,\ldots,\sigma_p)\in\Sigma^{m}_{p,*}(\Phi)\) we have that $\tilde\phi_{\sigma_1}\circ\cdots\circ\tilde\phi_{\sigma_p}|_{D_\sigma(\tilde\Phi)}$ is invertible; moreover, \(x\in D_{\delta_\ve}^{N}(\tilde\Phi)\) implies that \(\tilde\phi_{\sigma_1}\circ\cdots\circ\tilde\phi_{\sigma_p}(x)\neq x\).
Finally, there exists $c_*>2$ such that, for any  \(\delta\in(0,c_*^{-1}\delta_\ve/2)\) and \(y\in D_{\delta_{\ve}/2}^{N}(\tilde\Phi)\), we have
\[
\tilde\phi_{\sigma_1}\circ\cdots\circ\tilde\phi_{\sigma_p}(B_\delta(y))\cap B_\delta(y)=\emptyset.
\]
 \end{prop}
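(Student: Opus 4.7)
My plan is to combine Proposition~\ref{prop:fix-sep} with a finite inductive perturbation that displaces each fixed point of the length-$\leq N$ admissible compositions off the $(d-1)$-dimensional manifolds making up $D^N$; the final conclusion about $B_\delta(y)$ will then be an immediate consequence of Lemma~\ref{lem:fix}.

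First, I would apply Proposition~\ref{prop:fix-sep} with tolerance $\varepsilon/2$, so that after relabeling, the finite set $A := \bigcup_{p\leq N} A_p(\Phi)$ of fixed points of all length-$\leq N$ compositions is in the generic position provided there. Each $x_\sigma(\Phi) \in A$ depends $\cC^2$-smoothly on $\Phi$ through the implicit function theorem applied to \eqref{eq:fix-def}, and by Remark~\ref{rem:boundary} with \eqref{eq:many-manifolds} the set $D^N(\Phi)$ is a finite union of $\cC^2$ codimension-one manifolds that are themselves $\cC^2$-continuous in $\Phi$. I also fix, once and for all, a radius $r_0 > 0$ smaller than the pairwise distances between the finitely many points of $A$ and smaller than the distance from any $x_j \in A$ to the intermediate orbit points $\phi_{\omega_{j+1}}\circ\cdots\circ\phi_{\omega_q}(x_\omega)$ for $x_\omega \in A$ and $j<q$ (this minimum is strictly positive by Sub-Lemma~\ref{sublem:uff} combined with the generic position). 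A bump supported in $B_{r_0}(x_j)$ therefore affects only the fixed point $x_j$ among those in $A$.

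Enumerating $A = \{x_1,\dots,x_{k_\star}\}$, I would treat the points one at a time. If $x_j$ already lies at distance at least some prescribed $\delta_\varepsilon>0$ from the current $D^N$, do nothing. Otherwise, add to one of the maps $\phi_i$ used in the composition defining $x_j$ (e.g.\ by precomposing $\phi_{\sigma^{(j)}_p}$ with a small $\cC^2$ bump $h_{x_j,r}$ in the spirit of Proposition~\ref{prop:fix-sep}) a perturbation of amplitude at most $\varepsilon/(2k_\star)$ supported in $B_r(x_j)$ with $r<r_0$, whose direction in $\bR^d$ is transverse to every manifold of $D^N$ passing within distance $2\delta_\varepsilon$ of $x_j$. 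Such a direction exists because the union of forbidden directions is a finite union of hyperplanes in $S^{d-1}$. The bump shifts $x_j$ transversely by an amount linear in its amplitude, while every manifold in $D^N$ moves by at most a uniform Lipschitz multiple of the amplitude, so for small enough amplitude a net transverse separation is gained. After $k_\star$ steps I obtain $\tilde\Phi$ with $\|\tilde\phi_i-\phi_i\|_{\cC^2}\leq\varepsilon$ and $A(\tilde\Phi)\cap D^N(\tilde\Phi)=\emptyset$; compactness then furnishes $\delta_\varepsilon\in(0,\varepsilon)$ such that no fixed point of a length-$\leq N$ admissible composition lies in $D^N_{\delta_\varepsilon}(\tilde\Phi)$. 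The invertibility of $\tilde\phi_{\sigma_1}\circ\cdots\circ\tilde\phi_{\sigma_p}$ on $D_\sigma(\tilde\Phi)$ is inherited from Remark~\ref{rem:nice_nbd}: admissibility confines the orbit to the sets $V_{\sigma_j}\subset U_{\sigma_j}$ on which $\tilde\phi_{\sigma_j}$ remains injective for $\varepsilon$ small.

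The final assertion now follows from Lemma~\ref{lem:fix}. If $y\in D^N_{\delta_\varepsilon/2}(\tilde\Phi)$, $\delta<c_*^{-1}\delta_\varepsilon/2$ and $\tilde\phi_{\sigma_1}\circ\cdots\circ\tilde\phi_{\sigma_p}(B_\delta(y))\cap B_\delta(y)\neq\emptyset$, then Lemma~\ref{lem:fix} supplies the unique fixed point of the composition inside $B_{c_*\delta}(y)\subset D^N_{\delta_\varepsilon}(\tilde\Phi)$, contradicting the separation established above. The main obstacle I expect is the coupled bookkeeping in the inductive perturbation: a single bump simultaneously shifts one fixed point and deforms every manifold of $D^N$ built from compositions using the perturbed map, so one must verify quantitatively that the transverse shift of the targeted $x_j$ (linear in the amplitude, with a definite positive coefficient coming from transversality) dominates the simultaneous motion of the manifolds (Lipschitz in the amplitude with a uniform constant). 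Once this quantitative estimate is in place, the induction closes without further issue.
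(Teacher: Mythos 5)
Your high-level plan does mirror the paper's proof: apply Proposition~\ref{prop:fix-sep}, inductively perturb each fixed point of an admissible length-$\leq N$ composition off $D^N$ using a localized bump, conclude invertibility from Remark~\ref{rem:nice_nbd}, and obtain the final ball-separation statement from Lemma~\ref{lem:fix}. However, there are two genuine gaps.

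\textbf{First}, the claim that you can fix $r_0>0$ \emph{smaller than the distance from any $x_j \in A$ to every intermediate orbit point $\phi_{\omega_{j+1}}\circ\cdots\circ\phi_{\omega_q}(x_\omega)$} is false. Proposition~\ref{prop:fix-sep} explicitly leaves intact the case where $\phi_\omega$ and $\phi_\sigma$ are both powers of some $\Theta$, in which case $x_\omega = x_\sigma$ and the intermediate orbit returns \emph{exactly} to $x_\omega$ — the distance is zero, not strictly positive, and Sub-Lemma~\ref{sublem:uff} does not assert positivity; rather, it characterizes precisely when the return happens. So a bump supported in $B_{r_0}(x_j)$ will, in general, interact with itself under the composition, and the paper has to analyze this self-interference (this is exactly what Sub-Lemma~\ref{sublem:uff} and the periodic case handling inside Proposition~\ref{prop:bad_points}'s proof are for). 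Your approach of excluding it by choice of radius does not work.

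\textbf{Second}, the quantitative core — that ``the transverse shift of $x_j$ (linear in amplitude, with a positive coefficient coming from transversality) dominates the simultaneous motion of the manifolds (Lipschitz in amplitude)'' — is named by you as the main obstacle and then not proven, but this is where the whole content of the proposition lives. Both quantities are linear in the amplitude of a generic bump, so smallness of the amplitude alone does not resolve the competition; you must actually estimate the two constants and show the fixed-point shift wins. The paper achieves this through a carefully designed cubic-amplitude bump \eqref{eq:hdef} (support radius $\delta$, amplitude $\delta^3$), a case split on whether the last map in the manifold's preimage chain coincides with the perturbed map (in the generic case the manifold near $x_j$ does not move at all), and the quantitative bounds of Lemma~\ref{lem:fix-bouge} which then compare a $\delta^2$-order fixed-point displacement against a $\delta^3$-order manifold displacement. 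Without some replacement for Lemma~\ref{lem:fix-bouge}, the inductive step does not close, and the mere observation that forbidden directions form a finite union of hyperplanes is not by itself enough: you still need the fixed point to actually escape the manifold, not merely move transversally to it by an amount that the manifold's own motion could absorb.
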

 \begin{proof} 
 The last statement of the Proposition is an immediate consequence of the first part and Lemma \ref{lem:fix}.
 As for the first part, note that if for each $\sigma\in\Sigma^m_{p, *}$, $p\leq N$, the fixed points of $\phi_\sigma:=\phi_{\sigma_1}\circ \cdots\circ \phi_{\sigma_p}$ do not belong to $D^{N}(\tilde\Phi)$, then the Proposition holds with $\delta_\ve$ small enough. Thus, it suffices to prove the latter fact.\\
 By Remark \ref{rem:nice_nbd} it follows that there exists $\ve_0>0$ such that for each $\ve\in (0,\ve_0)$, and $\ve$-perturbation $\Tilde \Phi$ of $\Phi$, for all $\sigma\in \Sigma^m_{p,*}(\Phi)$, $p\leq N$, the inverse map of $\phi_\sigma$ is well defined in $D_\sigma(\tilde \Phi)$, additionally by Lemma~\ref{lem:good_diff}, \(\Sigma^m_{p,*}(\Phi)=\sigma^m_{p,*}(\tilde\Phi)\). From now on we assume $\ve\leq \ve_0$.

 We can then apply Proposition \ref{prop:fix-sep} to obtain the IFS $\Phi^0$, a $\ve/4$ perturbation of $\phi$, where the fixed points differ unless they are associated with sequences composed by the repetition of the same word. Next, we want to proceed by induction on the sequences in $\Sigma^m_{N,*}(\Phi)$.

To this end, it is necessary to have an order structure on  $\Sigma^m_{N,*}(\Phi)$.
We introduce the following order: $0\prec \sigma$ for all $\sigma\neq 0$, if $p>q$ and $\sigma\in \Sigma^m_{p,*}(\Phi)$, $\sigma'\in \Sigma^m_{q,*}(\Phi)$, then $\sigma'\preceq \sigma$. If $p=q$, then the $\sigma$ are ordered lexicographically. This is a total ordering, hence we can arrange them as sequences $\{\sigma^i\}_{i\in\bN}$ with $\sigma^j\prec \sigma^i$ iff $i>j$. Next, define $\ell(j)$ to be the length of the word $\sigma^j$, i.e. $\sigma^j\in\Sigma^{m}_{\ell(j),*}$. Recall the definition of fixed points \eqref{eq:fix-def}, also it is convenient to set $\sigma^0=0$ and $x_0(\phi)=\emptyset$. Also let $\Lambda_0=\max\{\|(D_x\phi_i)^{-1}\|_{\cC^0(V_i)}\;:\; \phi_i\in\Phi^0\}$.\\
The idea is to define a sequence of perturbations $\Phi^k$, $\|\Phi^{k+1}-\Phi^{k}\|_{\cC^2}=\ve_k\leq \ve 2^{-k-1}$,  such that, for all $j\leq k$,
\begin{equation}\label{eq:Dk-toprove}
x_{\sigma^j}(\Phi^{k})\not \in D^{N}(\Phi^{k}).
\end{equation}
Note that the above implies that there exists $\Lambda>2$ such that  
\[
\Lambda\geq \max\{\|(D_x\phi_i)^{-1}\|_{\cC^0(U_i)}\;:\; \phi_i\in\Phi^k\} 
\]
for all $k\in\bN$. In particular, the above implies that 
\begin{equation}\label{eq:sing-bouge}
D^{N}(\Phi^{k+1})\subset D^{N}_{2\Lambda^{N}\ve_k}(\Phi^k).
\end{equation}
Using the notation introduced just before Proposition \ref{prop:fix-sep}, let
 \[
\bar \delta_k=\min\left\{1,\frac1{2c_*}\inf_{\substack{x,y\in A_{N}(\Phi^k)\\ x\neq y}}\|x-y\|\right\}>0.
\]
We proceed by induction on $\sigma$. 
For $\sigma^0$ the statement in the induction is trivially true. We assume it is true for $\sigma^k$ and we prove the statement for $\sigma^{k+1}$. Let
\[
\delta_k=\min\left \{\min\{d_0(x_{\sigma^j}(\Phi^k), D^{N}(\Phi^k))\;:\; j\leq k\}, \bar \delta_k/4\right\}.
\]
We will consider $\ve_k$-perturbations of $\Phi^k$ with $\ve_k\leq \Lambda^{-N}\delta_k/4$.
For $\sigma\in\Sigma^m_p$, we will use the notation $\phi_{\sigma}=\phi_{\sigma_1}\circ \cdots\circ\phi_{\sigma_p}$.
If $x_{\sigma^{k+1}}(\Phi^k)\not \in D^{N}(\Phi^k)$, then we set $\Phi^{k+1}=\Phi^k$ and the induction step is satisfied. 
Otherwise, as before, for some \(a>2\), and any $\delta\in (0,1/ \sqrt a), v\in V$, and $\bar x\in\bR^d$ let us define \(h_{\bar x,\delta,v}:\bR^d\to\bR^d\) as
\begin{equation}\label{eq:hdef}
			h_{\bar x,\delta,v}(x)=\begin{cases}
			x\quad&\forall x\not\in B_\delta(\bar x)\\
			x+\delta^3g(1-\delta^{-1}\|x-\bar x\|) v   &\textrm{ otherwise},
			\end{cases}
\end{equation}
where $g\in\cC^\infty(\bR,\bR_+)$ is a monotone function such that $g(y)=0$ for all $y\leq 0$, $g(y)=1$ for all $y\geq 1/2$,  and $\|g'\|_\infty< a$. \\
Let $p=\ell(\sigma^{k+1})$.
For each $\delta>0$ and $v\in\bR^d$, $\|v\|\leq 1$, we consider the perturbations
$\Phi_{\delta, v}=\{\phi_{i,\delta, v}\}$ defined by
\[
\phi_{i,\delta,v}(x)=\begin{cases}
\phi_{i}(x) &\textrm{ if } i\neq \sigma^{k+1}_p\\
\phi_{\sigma^{k+1}_p}\circ h_{x_{\sigma^{k+1}}(\Phi^k),\delta,v}(x) &\textrm{ if } i= \sigma^{k+1}_p
\end{cases}
\]
where $\Phi^k=\{\phi_i\}$. Note that $\|\phi_i-\phi_i\circ h_{\bar x,\delta,v}\|_{\cC^2}\leq C_g\delta$, and $\|\phi_i\circ h_{\bar x,\delta,v}\|_{\cC^3}\leq C_g$ for some constant $C_g>1$. Thus, these are $\ve_k$ perturbations provided $\delta\leq C_g^{-1}\ve_k\leq C_g^{-1}\Lambda^{-N}\delta_k/4$.
\begin{lem} \label{lem:fix-bouge}
There exist $ C_*>0$ and $\delta_*\in(0, \min\{C_g^{-1}\ve_k, \Lambda^{-2N}\})$ such that, for all $\delta\leq \delta_*$, $\|v\|\leq 1$ and  each $j\leq k+1$,
\[
\frac 32\delta^2\|v\|\Lambda^{-N}\leq \|x_{\sigma^{j}}(\Phi^k)-x_{\sigma^j}(\Phi_{\delta, v})\|\leq \frac{\delta}2.
\]
Moreover, $\partial_vx_{\sigma^j}(\Phi_{\delta, v})$ is invertible and
\[
\|\left(\partial_vx_{\sigma^j}(\Phi_{\delta, v})\right)^{-1}\|\leq C_*\delta^{-3}\Lambda^{N}.
\]
\end{lem}
\begin{proof}
Let $q=\ell(\sigma^j)$. If $\sigma^j_s\neq  \sigma^{k+1}_p$, for all $s\leq q$, then $x_{\sigma^{j}}(\Phi^k)=x_{\sigma^j}(\Phi_{\delta, v})$. Otherwise, let $\bar s$ be the largest such that, for all \(s\leq \bar s\),  $\sigma^j_s\neq  \sigma^{k+1}_p$  then
\[
\begin{split}
&\phi_{\sigma^j_{\bar s},\delta,v}\circ \cdots \circ\phi_{\sigma^j_q,\delta,v}\left(x_{\sigma^j}(\Phi_{\delta, v})\right)=\\
&=\phi_{\sigma^j_{\bar s},\delta,v}\circ \cdots \circ\phi_{\sigma^j_q,\delta,v}\circ \phi_{\sigma^j_1,\delta,v}\circ \cdots\circ \phi_{\sigma^j_{\bar s-1},\delta,v}\left( \phi_{\sigma^j_{\bar s},\delta,v}\circ \cdots \circ\phi_{\sigma^j_q,\delta,v}\left(x_{\sigma^j}(\Phi_{\delta, v})\right)\right).
\end{split}
\]
Note that, $(\sigma^j_{\bar s},\dots,\sigma^j_{q},\sigma^j_1,\ldots, \sigma^j_{\bar s-1},)=\sigma^{j_1}$ for some $j_1<k+1$. Moreover, $\sigma^{j}_q=\sigma^{k+1}_p$ and 
\[
x_{\sigma^{j_1}}(\Phi_{\delta,v})=\phi_{\sigma^j_{\bar s},\delta,v}\circ \cdots \circ\phi_{\sigma^j_q,\delta,v}\left(x_{\sigma^j}(\Phi_{\delta, v})\right).
\]
By hypothesis 
\[
\|x_{\sigma^{j}}(\Phi_{\delta,v})-x_{\sigma^{j}}(\Phi^k)\|\leq\Lambda^p\|x_{\sigma^{j_1}}(\Phi_{\delta,v})-x_{\sigma^{j_1}}(\Phi^k)\|.
\]
We can thus consider only the case in which $\sigma^{j}_q=\sigma^{k+1}_p$.  Let $1\leq \bar s<q$ be the largest integer, if it exists, such that $\sigma^j_s=  \sigma^{k+1}_p$ for all \(s\leq \bar s\) then for $y\in B_\delta(x_{\sigma^{k+1}}(\Phi^k))$ we have
\[
x_{\sigma^{j}}(\Phi_{\delta,v})=\phi_{\sigma^j_{1},\delta,v}\circ \cdots \circ\phi_{\sigma^j_{\bar s},\delta,v}
\circ \phi_{\sigma^j_{\bar s+1}}\circ \cdots \circ\phi_{\sigma^j_{q}}\left(h_{x_{\sigma^{k+1}}(\Phi^k),\delta,v}(y)\right).
\]
Since, by construction, $h_{x_{\sigma^{k+1}}} (B_\delta(x_{\sigma^{k+1}}(\Phi^k)))\subset B_\delta(x_{\sigma^{k+1}}(\Phi^k))$ we have that applying $\phi_{\sigma^j_{\bar s},\delta,v}$ differs from applying $\phi_{\sigma^j_{\bar s}}$ only if
\[
\phi_{\sigma^j_{\bar s+1}}\circ \cdots \circ\phi_{\sigma^j_{q}}\left(B_\delta(x_{\sigma^{k+1}}(\Phi^k))\right)
\bigcap B_\delta(x_{\sigma^{k+1}}(\Phi^k))\neq \emptyset.
\]
But then Lemma \ref{lem:fix} implies that 
\[
\|x_{(\sigma^j_{s+1}, \dots, \sigma^j_q)}(\Phi^k)-x_{\sigma^{k+1}}(\Phi^k)\|\leq c_*\delta.
\]
Then our choice of $\delta$ and the induction hypothesis implies that $x_{\sigma^{k+1}}(\Phi^k)=x_{(\sigma^j_{s+1}, \dots, \sigma^j_q)}(\Phi^k)\not \in D^{N}(\Phi^k)$ contrary to our current assumption. It follows that, provided $x_{\sigma^{j}}(\Phi_{\delta,v})\in B_\delta(x_{\sigma^{k+1}}(\Phi^k))$,
\[
x_{\sigma^{j}}(\Phi_{\delta,v})=\phi_{\sigma^j}\left(h_{x_{\sigma^{k+1}}(\Phi^k),\delta,v}(x_{\sigma^{j}}(\Phi_{\delta,v}))\right).
\]
To simplify notation let $z(\delta,v)=x_{\sigma^{j}}(\Phi_{\delta,v})$, $h_{\delta,v}=h_{x_{\sigma^{k+1}}(\Phi^k),\delta,v}$ and $\phi_{\delta,v}=\phi_{\sigma^j}\circ h_{\delta,v}$. We can study $z(\delta,v)$ applying the implicit function theory which yields 
\[
\frac{d}{d\delta} z(\delta,v)=(\Id-D_{z(\delta,v)}\phi_{\delta,v})^{-1}D_{h_{\delta,v}(z(\delta,v))}\phi_{\delta,v}\partial_\delta h_{\delta,v}(z(\delta,v)).
\]
If $z(\delta,v)\in B_{\delta/2}(x_{\sigma^{k+1}}(\Phi^k))$, then $h_{\delta,v}(z(\delta,v))=z(\delta,v)+\delta^3 v$, $D_{z(\delta,v)} h_{\delta,v}=\Id$, and $\partial_\delta h_{\delta,v} (z(\delta,v))=3\delta^2v$. Thus, setting $A:=D_{z(\delta,v)+\delta^3v}\phi_{\delta,v}$ we obtain
\[
\frac{d}{d\delta} z(\delta,v)=3\delta^2(\Id-A)^{-1}Av.
\]
Since the maximal eigenvalue of $(\Id-A)^{-1}A$ is bounded by $(1-\lambda)^{-1}\lambda$, there exists a $\delta_*$ such that, for all $\delta\leq \delta_*$, $z(\delta,v)\in B_{\delta/2}(x_{\sigma^{k+1}}(\Phi^k))$. Moreover, 
\[
\|(\Id-A)^{-1}Av\|\geq \Lambda^{-N}/2\|v\|
\]
thus $z(\delta,v)\not \in B_{\frac 32\delta^2\|v\|\Lambda^{-N}}(x_{\sigma^{k+1}}(\Phi^k))$.
Finally, for each $\delta\leq \delta_0$, we have
\[
\partial_vz(\delta,v)=\delta^3(\Id-A)^{-1}A
\]
from which the last statement of the Lemma follows.
\end{proof}
By Lemma \ref{lem:fix-bouge}, equation \eqref{eq:sing-bouge}, and our choice of $\ve_{k}$, $x_{\sigma^j}(\Phi_{\delta,v})\not \in D^{N}_{3\delta_k/4}(\Phi^{k})$  and $D^{N}(\Phi^{k+1})\subset D^{N}_{\delta_k/4}(\Phi^{k})$ for all $j\leq k$. Thus, $x_{\sigma^j}(\Phi_{\delta,v})\not \in D^{N}(\Phi^{k+1})$ for all $j\leq k$. 
We are left with $x_{\sigma^{k+1}}(\Phi^{k+1})$, recalling that $x_{\sigma^{k+1}}(\Phi^k) \in D^{N}(\Phi^k)$.
Let $\omega\in \Sigma^m_{q,i}$, $q\leq N$ and $M \in M_{\omega,i}^{N}(\Phi^k)$ such that $x_{\sigma^{k+1}}(\Phi^k)\in M$, then
\[
\phi_{\omega_1}\circ\cdots\circ\phi_{\omega_q}(x_{\sigma^{k+1}}(\Phi^k))\in \overline{\psi_i(D_i)}.
\]
Suppose first that $\omega_q\neq \sigma^{k+1}_p$.
It follows that for $y\in B_{\delta/2}(x_{\sigma^{k+1}}(\Phi^k))$
\[
\phi_{\omega_1, \delta,v}\circ\cdots\circ\phi_{\omega_q, \delta,v}(y)\neq \phi_{\omega_1}\circ\cdots\circ\phi_{\omega_q}(y)
\]
only if for some $s< q$, $\omega_s=\sigma^{k+1}_p$ and
\begin{equation}\label{eq:nogo}
\phi_{\omega_{s+1}}\circ\cdots\circ\phi_{\omega_q}(B_{\delta}(x_{\sigma^{k+1}}(\Phi^k)))\cap B_{\delta}(x_{\sigma^{k+1}}(\Phi^k))\neq \emptyset,
\end{equation}
but this is ruled out by our choice of $\delta_k$ and Proposition \ref{prop:fix-sep}. The above discussion shows that $M\cap B_{\delta/2}(x_{\sigma^{k+1}}(\Phi^k))$ is an element of $M_{\omega, i}^{N}(\Phi_{\delta,v})$ as well. Hence, it suffices to ensure that $x_{\sigma^{k+1}}(\Phi_{\delta,v})\not \in M$. Since Lemma \ref{lem:fix-bouge} shows that varying $v$ the fixed point visits an open ball, and since $M$ has zero measure, it follows that there exists an open set of $v$ which yields the wanted property.\\
 It remains to analyse the case $\omega_q= \sigma^{k+1}_p$. In this case, for  $y\in B_{\delta/4}(x_{\sigma^{k+1}}(\Phi^k))$ we have
\[
\phi_{\omega_1, \delta,v}\circ\cdots\circ\phi_{\omega_q, \delta,v}(y)=\phi_{\omega_1, \delta,v}\circ\cdots\circ\phi_{\omega_q}(y+\delta^3 v)\neq \phi_{\omega_1}\circ\cdots\circ\phi_{\omega_q}(y+\delta^3 v)
\]
only if equation \eqref{eq:nogo} is satisfied, which, by our choice of $\delta_k$ is possible only if $x_\omega(\Phi^k)=x_\sigma(\Phi^k)$. But then Proposition \ref{prop:fix-sep} implies that there exist $\sigma^r\preceq \sigma^{k+1}$ such that $\phi_\omega=\phi_{\sigma^r}^{m_1}$ and $\phi_\sigma=\phi_{\sigma^r}^{m_2}$, which would mean
\[
\overline{\psi_i(D_i)}\ni \phi_{\sigma^r}^{m_1}(x_{\sigma^{k+1}}(\Phi^k))=x_{\sigma^{k+1}}(\Phi^k).
\]
Again, Lemma \ref{lem:fix-bouge} allows to find an open set of $v$ for which $x_{\sigma^{k+1}}(\Phi^k)\not\in \overline{\psi_i(D_i)}$.
The last possibility is that 
\[
\phi_{\omega_1, \delta,v}\circ\cdots\circ\phi_{\omega_q, \delta,v}(y)=\phi_{\omega_1, \delta,v}\circ\cdots\circ\phi_{\omega_q}(y+\delta^3 v).
\]
This implies that the perturbed manifold $M$ is displaced by, at most, $2\Lambda^{N}\delta^3\|v\|$ while Lemma \ref{lem:fix-bouge} implies that the fixed point moves by at least $\frac 32\delta^2\|v\|\Lambda^{-N}\geq 2\Lambda^{N}\delta^3\|v\|$. Hence, again we have an open set of $v$, which produce perturbations with the wanted property. As a last observation, note that if there are other manifolds $M\in D^{N}(\Phi^k)$ such that $x_{\sigma^{k+1}}(\Phi^k)\in M$, then we can repeat the same argument and we have just a smaller open set of $v$ that does the job.
This concludes the overall induction and hence the proof of Proposition \ref{prop:bad_points}.
\end{proof}

\subsection{Perturbations with low complexity and the proof of Proposition~\ref{prop:psc-dense}}\label{sec:4.3}\ \\
Thanks to Proposition~\ref{prop:bad_points} we can finally construct the wanted perturbation $\tilde f$.\\
	 Let \(f\) be a piecewise smooth contraction with the maximal partition \(\bsP(f)=\{P_1,P_2,\ldots,P_m\}\). Let \(l_{0}\in\bN\) be the number of manifolds in \(\partial\bsP(f)\). Define \(l_1=\max\{c_d l_{0},d\}\).\footnote{ Recall that, by construction $c_d$ is the maximal number of manifolds in $M^N_{\sigma,i}$, $N\in\bN$ and $\sigma\in\Sigma^m_{0,i}$, that can contain a point in $\psi_i(D_i^+)$.} 
	 
Given two manifolds defined by maps $\psi_1,\psi_2$, we write $\psi_1\pitchfork\psi_2$ if the manifolds are transversal, see \ref{def:transversal} for the definition of transversality. On the contrary, if the two manifolds have an open (in the relative topology) intersection, we call them {\em compatible} and write $\psi_1\curlywedge \psi_2$. If two manifolds are not compatible, we write $\psi_1\notcurlywedge \psi_2$.

 \begin{prop}\label{lem:good_partn}
     Let \(f:X\to X\) be a piecewise smooth contraction with maximal partition \(\bsP(f)\). Then, for any \(N\in\bN\) and \(\ve>0\) small enough, there exists a piecewise smooth contraction \(\tilde{f}\), with \(d_2(f,\tilde{f})<\ve\) such that no more than $2^{dm^{d-1} l_1}$ partition elements of $\bsP(\tilde f^N)$ can meet at one point.
 \end{prop}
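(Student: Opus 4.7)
The plan is to combine a two-stage preparation of the IFS with an inductive transversalization of the pre-image manifolds of $\partial\bsP(f)$, finishing with a combinatorial count of how many of those manifolds can meet at a single point. Throughout, the auxiliary IFS $\Phi_{\! f}$ and the description of $\partial\bsP(\tilde f^N)$ furnished by Remark~\ref{rem:boundary} reduce the whole question to a statement about finitely many smooth codimension-$1$ submanifolds of $\bR^d$.

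First I would apply Proposition~\ref{prop:bad_points} to get an $\ve/2$-perturbation $\tilde\Phi=\{\tilde\phi_1,\dots,\tilde\phi_m\}$ of $\Phi_{\! f}$ and a $\delta_\ve>0$ such that, for every admissible $\sigma\in\Sigma^m_{p,*}(\tilde\Phi)$ with $p\leq N$, the composition $\tilde\phi_{\sigma_1}\circ\cdots\circ\tilde\phi_{\sigma_p}$ has no fixed point in $D^N_{\delta_\ve}(\tilde\Phi)$, and such that any small ball $B\subset D^N_{\delta_\ve/2}(\tilde\Phi)$ is disjoint from $\tilde\phi_{\sigma_1}\circ\cdots\circ\tilde\phi_{\sigma_p}(B)$. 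This non-recurrence is the key mechanism of the whole proof: it means that a later perturbation of a single map $\tilde\phi_k$, supported in such a ball, will enter every composition at most once (in its intended application) instead of cascading through the iteration.

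Second, I would enumerate the admissible sequences $\sigma\in\Sigma^m_{r,i}$ with $r\leq N$ and $i\in\{1,\dots,l_0\}$ (say by decreasing length with lexicographic tie-breaking) and inductively transversalize the associated manifolds $M\in M^N_{\sigma,i}(\tilde\Phi)$. At step $k$ of the induction I would apply the transversality theorem of Appendix~\ref{sec:transversality} to produce a $\cC^2$-perturbation of $\tilde\phi_{\sigma_r}$ of size at most $\ve 2^{-k-1}$, supported in a ball around $M$ contained in $D^N_{\delta_\ve/2}(\tilde\Phi)$, so that the new $M$ becomes either transversal to, or locally compatible (i.e.\ coinciding on an open subset) with, every previously treated manifold it meets. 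By the non-recurrence from the first step, this local perturbation does not touch the other compositions on any previously handled manifold, so the transversality and compatibility relations already established are preserved. Summing geometrically keeps the total perturbation below $\ve/2$, so the final $\tilde f$ satisfies $d_2(f,\tilde f)<\ve$.

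Finally, I would count. Fix $p\in X$ and group the manifolds $M\in M^N_{\sigma,i}(\tilde\Phi)$ passing through $p$ into compatibility classes: within each class they coincide locally, between classes they are transversal by construction. In $\bR^d$, at most $d$ transversal codimension-$1$ hypersurfaces can share a common point; within each transversality direction, at most $l_1$ distinct base manifolds $\psi_i(D_i)$ are involved (by the very definition of $l_1$), and for each such base sheet the number of admissible symbolic pre-images of length $\leq N$ producing pairwise non-compatible sheets at $p$ is bounded by $m^{d-1}$ (the transversality gain along successive coordinates saturates at $d-1$). Hence the total number $K(p)$ of distinct hypersurfaces through $p$ is at most $dm^{d-1}l_1$, and since $K$ transversal codim-$1$ hypersurfaces cut a neighborhood of $p$ into at most $2^K$ connected components, no more than $2^{dm^{d-1}l_1}$ partition elements of $\bsP(\tilde f^N)$ can meet at $p$.

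The principal obstacle is the inductive perturbation step: we are only allowed to modify individual maps $\tilde\phi_k$, yet each such modification enters every composition in which $\tilde\phi_k$ appears, so a naive application of the transversality theorem would deform previously transversalized manifolds uncontrollably. This is exactly the issue that Proposition~\ref{prop:bad_points} was set up to resolve, since the forward orbits of the perturbation support never return to it within $N$ steps, and each previously placed manifold is therefore only affected at the one prescribed stage of the composition. A secondary technical nuisance, handled by choosing perturbation sizes geometrically decreasing along the induction, is that strict transversality of earlier manifolds must be kept strict after each new perturbation.
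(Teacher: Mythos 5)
Your opening moves match the paper: invoking Proposition~\ref{prop:bad_points} to keep locally supported perturbations from cascading through a composition of length $\leq N$ is indeed the enabling idea, and the broad strategy (transversalize, then count sheets through a point and take $2^{\#\text{sheets}}$) is the right shape. However, there is a genuine gap in your final count, and the inductive transversalization is underspecified in a way that hides the combinatorics that actually produce the bound.

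The counting claim ``in $\bR^d$, at most $d$ transversal codimension-$1$ hypersurfaces can share a common point'' is not correct under the weak reading (pairwise transversal: arbitrarily many lines through the origin in $\bR^2$ are pairwise transversal), and under the strong reading (jointly in general position, every sub-collection having expected codimension) it is not what your perturbation achieves. The whole point of the paper's hierarchy $\cZ^s_k$ and property $(\star)$ is that one cannot put all the pre-image sheets in joint general position: different symbolic pre-images of the same original boundary manifold can coincide (be ``compatible'') on open sets, and this must be allowed. The best one can arrange is the weaker dimension drop stated in $(\star)$, namely dimension $<d-j$ once more than $jm^{j-1}l_1$ original sheets are involved; taking $j=d$ gives emptiness, hence the bound $dm^{d-1}l_1$. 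Your alternative decomposition ``$d$ transversal directions $\times$ $l_1$ base manifolds per direction $\times$ $m^{d-1}$ non-compatible pre-images per base sheet, because transversality gain saturates at $d-1$'' arrives at the same number but is not a theorem; the middle factor is circular (it is exactly what the transversalization step is supposed to establish, yet you use it to justify the count), and I see no proof of the $m^{d-1}$ bound.

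The second gap is in the inductive step. Making each new codimension-$1$ pre-image manifold transversal to each previously treated one does not control the dimension of their simultaneous intersections, which is what the count needs. The paper applies the transversality theorem not to single manifolds but to the map $F(x)=(\phi_{\sigma_1}(x),\ldots,\phi_{\sigma_n}(x))$ and the product variety $W_1\times\cdots\times W_n$ (Lemma~\ref{lem:intersection}), which yields the dimension formula $\sum_i(d-j_i)-(n-1)d$ for the common intersection, and then propagates it through the hierarchy $\cZ^s_k$ with the bookkeeping $k_i\leq j_im^{j_i-1}l_1$. Without this simultaneous, dimension-tracked transversalization and the associated induction on $s$, the argument does not close and no explicit bound on the number of sheets through a point follows.
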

 \begin{proof}
Before starting the proof, we need to introduce some language.\\
Consider an IFS \(\Phi_{\! f}=\{\phi_1,\phi_2,\ldots,\phi_m\}\) associated to $f$ with contraction coefficient \(\lambda\). Let \(\ve_0\in(0,\ve/4)\) be small enough, then by Proposition~\ref{prop:bad_points} there exist \(\delta_*\leq \delta_\ve\in (0,\ve_1)\)  and an \(\ve_0-\)perturbation\footnote{ Here, and in the following by {\em perturbation} we mean a function which is $\cC^2$ close and with a uniformly bounded $\cC^3$ norm.} of \( \Phi_{\! f}\) (which, abusing the notation, we still call \(\Phi_{\! f}\)), such that, for every \(p\leq N\), \(\sigma=(\sigma_1,\ldots,\sigma_p)\in\Sigma^{m}_{p,*}\) and \(\xi\in D^{N}_{\delta_*}(\Phi_{\! f})\cap V_p\),
 \begin{equation}\label{eq:disjt}
	 \phi_{\sigma_1}\circ\phi_{\sigma_{2}}\circ\cdots\circ\phi_{\sigma_p} (B_{\delta_*}(\xi))\cap B_{\delta_*}(\xi)=\emptyset.
 \end{equation} 
 Note that there exists $\epsilon_0\leq \ve_0$ such that \eqref{eq:disjt} persists for  $\epsilon_0$-perturbations of $\Phi$.\\
By compactness, for \(\delta\in(0,\min\{\delta_{*}/2,\delta_N\})\), where \(\delta_N>0\) is such that it satisfies the condition of Remark~\ref{rem:nice_nbd} for each $\epsilon_0$-perturbation, there exists a finite open cover \(\{B_{\delta/2}(z_i)\}_{i=1}^t\) of \(\overline{D_{\delta}^N(\Phi_{\! f})}\) (which, by definition, contains \(\partial  \bsP(f^N) \)) such that, for each $i$, $z_i\in P_j$ and $B_{\delta}(z_i)\subset V_j$, for some $j$.\footnote{ See the discussion at the beginning of Section \ref{sec:manifold} for the definition of $V_j$.}  \\ 
Let $Y=\overline Y\subset\bR^d$ be compact, such that $\overline{\phi (Y)}\subset Y$ for all $\phi\in\Phi$. 

By convention, we set $\cZ^1_0(\Phi)=\{Y\}$ and $\cZ^1_1(\Phi):=\{\psi_{\omega}\;;\;\omega\in\{1,\dots,l_{*}\}\}$ be the collection of the manifolds $\psi_i(A)$ for $A\in \cW^N_i$, as defined in \eqref{eq:many-manifolds}.
Also, we call $\cZ^1_k(\Phi):=\{\psi^k_{\omega}\;:\;\omega\in\{1,\dots,l_{*}\}^k\}$ the manifolds consisting of the intersection of the $k$ manifolds $\{\psi_{\omega_i}\;:\;i\in\{1,\dots,k\}; i\neq j\implies\psi_{\omega_i}\notcurlywedge\psi_{\omega_j}\}$.\footnote{ These are indeed manifolds, see Definition \ref{def:psc}. To simplify notations, we use  $\psi_\omega$ both for the manifold and for the map that defines it.} 
 By construction, $\cZ^1_k(\Phi)=\emptyset$ for $k>l_0$. In addition, the maximal dimension of the manifolds in $\cZ^1_k(\Phi)$, for $k>1$, is $d-2$ (since the boundary manifolds are pairwise transversal, see Definition \ref{def:psc}). Note that $\cZ^1_1(\Phi_{\! f})$ is a collection that covers the boundary manifolds; for simplicity, we will call the elements of $\cZ^1_1(\Phi_{\! f})$, from now on, original boundary manifolds.
For each $s\in\bN$ and $k_1,\dots, k_n\in\bN\cup\{0\}$, let $\cZ^s_0=\{Y\}, U_0=Y$ and\footnote{ To alleviate notation, from now on we will write $\phi_{\sigma_i}^{-1}$ to mean the inverse of $\phi_{\sigma_i}|_{U_{\sigma_i}}$, while the domain of $\phi_{\sigma_i}^{-1}\circ \psi$ consists of the points where the composition is well defined.}
\[
\begin{split}
&\cZ_{k}^{s+1}(\Phi)=\Big\{\phi_{\sigma_1}^{-1}\circ\psi_{\omega_1}\cap \cdots\cap \phi_{\sigma_n}^{-1}\circ\psi_{\omega_n}\;:\; n\in\bN, k_1,\dots, k_n\in\bN\cup\{0\}, \sum_{i=1}^nk_i=k, \\
&\phantom{\cZ_{k}^{s+1}(\Phi)=\Big\{\ \ }
\psi_{\omega_i}\in\cZ_{k_i}^s(\Phi), \sigma_i\in\Sigma^m_{1,*}(\Phi); i\neq j,\sigma_i=\sigma_j \implies \psi_{\omega_i}\notcurlywedge\psi_{\omega_j}\Big\}\\
&\cZ_{*}^{s}(\Phi)=\bigcup_{k\in\bN\cup\{0\}}\cZ_{k}^{s}(\Phi).
\end{split}
\]
Note that $\cZ^s_*$ contains the admissible pre-images of the boundary manifolds under composition of at most $s$ maps in $\Phi$ and all their intersections. In particular, the sets in $\cZ^s_1$ cover $\partial \cP(f^s)$, and we will call them boundary manifolds. In addition, if a set belongs to $\cZ^s_k(\Phi)$, then, by definition, it is determined by the intersection of the pre-images of $k$ original boundary manifolds. Also, remark that $\cZ^{s+1}_k\supset \cZ^s_k$ since $\phi_0=id$ (see Remark \ref{rem:boundary0} and Definition \eqref{eq:sigmazero}). Next, let $N_s$ be the maximal number of manifolds that can intersect in $\cZ^s_*$, that is, $\cZ^s_k=\emptyset$ for $k>N_s$. We have seen that $N_1\leq l_1$. Moreover, each original boundary manifold can have at most $m^s$ different pre-images obtained by the compositions of $s$ maps. This implies that, at each point, we have at our disposal at most $\sum_{s'=0}^{s-1} m^{s'}l_1$ different manifolds from $\cup_i M^N_{0,i}$ to intersect. If $m=1$ then $N_s\leq sl_1$; if $m\geq 2$, then $N_s\leq\frac{m^{s}-1}{m-1} l_1< 2m^{s-1}l_1$. Accordingly,  $N_s\leq s m^{s-1} l_1$.\footnote{ Remark that, by definition, the pre-images are taken via invertible maps, hence the manifolds cannot self-intersect.}

Our goal is to produce a sequence of perturbations $\Phi^s$ of $\Phi_{\! f}=:\Phi^0$ such that $\Phi^{s}$ is a $2^{-s}\ve$ perturbation of $\Phi^{s-1}$ with the following property:
\begin{enumerate}[($\star$)]
\item {\em the set $\cZ^{s}_k(\Phi^s)$ consists of manifolds of dimension strictly smaller than $d-j$ for all $k> jm^{j-1}l_1$. While $\cZ^{s}_1(\Phi^s)$ consists of $d-1$ dimensional manifolds. This property persists for small perturbations of $\Phi^s$.}
\end{enumerate}
Note that the above implies that $\cZ^{s}_k(\Phi^{s})=\emptyset$, for  each $s\in\bN$ and $k> dm^{d-1}l_1$. Accordingly, at most $dm^{d-1} l_1$ pre-images of the original boundary manifolds under composition of at most $s$ elements of $\Phi^s$ can have nonempty intersections. In turn, defining $f_s(x)=\phi_i(x)$ for $x\in P_i$ and $\phi_i\in \Phi^s$, we obtain a perturbation of $f$ smaller than $\sum_{j=1}^{s}2^{-j}\ve\leq \ve$, such that $\cP(f_s^s)$ has at most $2^{dm^{d-1} l_1}$ elements meeting at point.\\
 Indeed, suppose $p$ elements of  $\cP(f_s^s)$ meet at a point $x$. The boundaries of such elements in a neighbourhood small enough of $x$ consist of co-dimension one manifolds belonging to $\cZ^s_1(\Phi^s)$, and they have to intersect at $x$. Suppose the total number of such boundary manifolds is $q$, then $x$ must belongs to a manifold in $\cZ^s_q(\Phi^s)$, hence it must be $q\leq dm^{d-1} l_1$. Note that we can uniquely define a partition element by specifying on which side it lies with respect to all its boundary manifolds. Since there are at most $2^q$ possibilities, it must be $ p\leq 2^q$. It follows that $p\leq 2^{dm^{d-1} l_1}$. The Lemma would then follow by choosing $s=N$.

It remains to prove property $(\star)$, we will proceed by induction. If $s=1$, and $k\in\{1,\dots,l_1\}$, then the manifolds in $\cZ^1_k$ are indeed of co-dimension at least one, with the manifolds in $\cZ^1_1$ of co-dimension one, while if $k >l_1$ then $\cZ^1_k=\emptyset$, so $\Phi_{\! f}=\Phi^0$ satisfies our hypothesis. Let us assume that the hypothesis is verified for some $s$ and prove it for $s+1$.

Let $\epsilon_s\leq \min\{2^{-s-1}\ve,\epsilon_0\}$ be such that all the $\epsilon_s$-perturbations of $\Phi^s$ still satisfy $(\star)$. This implies that, provided $\Phi^{s+1}$ is a $\epsilon_s$ perturbation of $\Phi^s$, $\cZ_*^{s'}(\Phi^{s+1})$ has the wanted property for all $s'\leq s$.

Accordingly, we must analyse only sets of the type $\phi_{\sigma_1}^{-1}\circ\psi_{1}\cap \cdots\cap\phi_{\sigma_n}^{-1}\circ\psi_n$, where $\psi_i\in\cZ_{k_i}^s(\Phi^s)$ and $\phi_{\sigma_i}\in\Phi^{s}\cup\{id\}$, $\sigma_i\in\{0,\dots,m\}$.\footnote{ By an innocuous abuse of notation here we use $\psi_i$ to refer to generic elements.} By definition, such sets are elements of $\cZ^{s+1}_k(\Phi^s)$, with $k=\sum_{i=1}^n k_i$.
Note that the $\psi_i\in \cZ^s_0(\Phi^s)$ do not contribute to the intersection, we can thus assume w.l.o.g. that $k_i>0$.  Note that if $n=1$, then the manifolds belong to $\cup_{i=0}^m\phi_i^{-1}(\cZ_*^s(\Phi^s))\subset  \cZ_*^{s+1}(\Phi^{s})$ which have automatically the wanted property, and so has any $\epsilon_s$-perturbation. We consider thus only the case $n\geq 2$.
In addition, if $\phi_{\sigma_i}=\phi_{\sigma_j}$, $i\neq j$, then $\phi_{\sigma_i}^{-1}\circ \psi_i\cap\phi_{\sigma_j}^{-1}\circ \psi_j=\phi_{\sigma_i}^{-1}\circ (\psi_j\cap\psi_j)$ and, since by definition $\psi_i\notcurlywedge\psi_j$, $\psi_j\cap\psi_j\in \cZ^s_{k_i+k_j}(\Phi^s)$. Hence, we can substitute to the intersection of the manifolds $\phi_{\sigma_i}^{-1}\circ \psi_i\cap\phi_{\sigma_j}^{-1}\circ \psi_j$ the manifold $\phi_{\sigma_i}^{-1}\circ (\psi_j\cap\psi_j)$. We can thus assume w.l.o.g. that $i\neq j$ implies $\phi_{\sigma_i}\neq \phi_{\sigma_j}$.\\
Define the map $F:\bR^d\to\bR^{nd}$ by $F(x):=(\phi_{\sigma_1}(x),\dots, \phi_{\sigma_n}(x))$ and the stratified subvariety $C=\{(\psi_{1}(x_0), \dots\psi_{n}(x_n))\;:\; x_i\in \overline D_{i}\}$, where $D_i\subset \bR^{d_i}$ is the domain of the map $\psi_i$. By Lemma \ref{lem:intersection}, for a constant $c$ to be chosen later, there exists a $\frac 12 c\epsilon_s$-perturbation $\hat F=(\hat \phi_{\sigma_1}(x),\dots, \hat\phi_{\sigma_n}(x))$ of $F$, transversal to $C$.
 If $\phi_{\sigma_i}\neq id$ for all $i$, then we set $\tilde F=\hat F$. If, for some $i$, $\phi_{\sigma_i}=id$, then $\hat \phi_{\sigma_i}$ is a small perturbation of identity, and hence it is invertible with $\cC^3$ inverse.\footnote{ Indeed if $\|h-id\|_{\cC^1}=\alpha<1$ then $h$ is a diffeo. In fact, if $h(x)=h(y)$,
\[
0=\int_0^1\frac{d}{dt}h(ty +(1-t)x) dt=y-x+\int (Dh-\Id)(x-y)dt
\]
which implies $\|x-y\|\leq \alpha \|x-y\|$, that is $x=y$. Thus, $h$ is globally invertible, and the claim follows by the inverse function theorem. } By possibly relabelling, we can assume $i=1$. Then we define
\[
 \tilde F(x)=(\tilde \phi_{\sigma_1}, \dots, \tilde \phi_{\sigma_n})=(x,\hat\phi_{\sigma_2}\circ\hat \phi_{\sigma_1}^{-1},\dots,\hat\phi_{\sigma_n}\circ\hat \phi_{\sigma_1}^{-1}).
 \]
 $\tilde F$ is still transversal to $C$ and, if $c$ is small enough, by Lemmata \ref{lem:good_pert} and \ref{lem:good_pert2}, it is a $c\epsilon_s$ perturbation of $F$. Let $d_i=d-j_i$ be the dimension of the manifold $\psi_{i}$, then $k_i\leq j_im^{j_i-1}l_1$. Lemma \ref{lem:intersection}  implies that the sets $\bigcap_{i=1}^n\phi_{\sigma_i}^{-1}\circ\psi_i$ are manifolds with dimension\footnote{ Actually, they are stratified sub-varieties, but we can restrict them to manifolds without loss of generality.} 
 \[
 \sum_{i=1}^n (d-j_i)-(n-1)d=d-\sum_{i=1}^nj_i=:d-\bar j\leq d-\max\{j_i\}-1.
 \]
 Note that
 \[
 k:=\sum_{i=1}^nk_i\leq \sum_{i=1}^n j_im^{j_i-1}l_1\leq \sum_{i=1}^n j_im^{\bar j-1}l_1=\bar j m^{\bar j-1}l_1.
 \]
Accordingly, if $k>j m^{j-1}l_1$, then $\bar j\geq j+1$ and the manifold has a dimension strictly smaller than $d-j$,
 as required.\footnote{ Of course, if $k>dm^{d-1}l_1$, then the intersection is empty.}\\
 We would then like to define a perturbed IFS $\tilde\Phi^{s}$ as
 \[
\tilde \phi_{k}=\begin{cases} 
\tilde  \phi_{\sigma_i}&\textrm{ if }k=\sigma_i\\
\phi_k&\textrm{ otherwise}.
\end{cases}
\]
Unfortunately, this may perturb the new manifolds $\psi_i$ as well, since they are now defined via pre-images of $\tilde{\Phi}^{s}$. This is the last problem we need to take care of. To this end, we make the perturbation only locally starting from the ball $B_{\delta}(z_1)$. Once we check that the perturbation is as required in this ball, we will consider the other balls, making the new perturbations small enough not to upset the property obtained in $B_{\delta/2}(z_1)$.

Let \(g\in \cC^\infty(\bR^d,[0,1])\)  be such that
	     \[
	     g(z)=
		 \begin{cases}1\quad\quad z\in B_{\delta/2}(z_1)\\
		 0 \quad\quad z\notin B_{3\delta/4}(z_1)
	     \end{cases}
	     \]
     and $\|g\|_{\cC^r}\leq C\delta^{-r}$ for $r\in\{0,1,2,3\}$ for some \(C>0\). Define, for each $i\in\{1,\dots,m\}$, 
 \[
	 \phi_{i,1}(x)=\phi_{i}(x)+g(x)(\widetilde \phi_i(x)-\phi_i(x)).
\]
Provided we choose $c$ small enough
\[
	 \norm{ \phi_{i,1}-\phi_{i}}_{\cC^2}=\norm{g}_{\cC^2}\norm{\widetilde \phi_i-\phi_i}_{\cC^2}<C\delta^{-2}C_\#c\epsilon_s <\epsilon_s/4.
\]
We define then the perturbation $\Phi^{s,1}=\{\phi_{i,1}\}$. Note that the  $\Phi^{s,1}$ equals $\Phi^{s}$ outside the ball $B_\delta(z_1)$ and agrees with $\tilde\Phi^s$ inside the ball $B_{\delta/2}(z_1)$.

Recall that we perturbed the system in order to control the intersection of the manifolds $\phi_{\sigma_1}^{-1}\circ\psi_1\cap \cdots\cap\phi_{\sigma_n}^{-1}\circ\psi_n$.
By constructions each $\psi_i$ is the intersection of manifolds $\phi_{\sigma^{i,j}_{s}}^{-1}\circ\cdots\phi_{\sigma^{i,j}_1}^{-1}\circ  \bar \psi_{i,j}$ with $\bar \psi_{i,j}\in\cZ_1^1(\Phi^s)$ and $\sigma^{i,j}\in\{0,\dots,m\}^s$. We are thus interested in $A:=\phi_{\sigma_{i},1}^{-1}\circ \phi_{\sigma^{i,j}_{s},1}^{-1}\circ\cdots\circ\phi_{\sigma^{i,j}_1,1}^{-1}\circ  \bar \psi_{i,j}\cap B_{\delta}(z_1) $ which are the perturbation of $\phi_{\sigma_i}^{-1}\circ\psi_i$. Let 
\[
h_i(x)=\begin{cases}
x&\textrm{ if } x\not\in B_\delta(z_1)\\
\phi_{\sigma^{i}}^{-1}\circ \phi_{\sigma^{i},1}&\textrm{ otherwise}.
\end{cases}
\] 
By choosing $\epsilon_s$ small enough we have $\|h_i- id\|_{\cC^2}\leq \delta/4$, in particular $h_i$ is a diffeomorphism. It follows that $h_i(B_\delta(z_1))\subset B_\delta(z_1)$. 
Let $x\in A$, then Proposition \ref{prop:bad_points} implies 
\[
\begin{split}
\phi_{\sigma^{i,j}_{1},1}\circ\cdots\phi_{\sigma^{i,j}_s,1}\circ \phi_{\sigma_i,1}(x)&=\phi_{\sigma^{i,j}_{1},1}\circ\cdots\phi_{\sigma^{i,j}_s,1}\circ \phi_{\sigma_i}\circ h_i(x)\\
&=\phi_{\sigma^{i,j}_1}\circ\cdots\circ \phi_{\sigma^{i,j}_s}\circ \phi_{\sigma_i}\circ h_i(x)\\
&=\phi_{\sigma^{i,j}_1}\circ\cdots\circ \phi_{\sigma^{i,j}_s}\circ\phi_{\sigma^{i},1}(x)
\end{split}
\]
hence the part of $\psi_i$ contained in $\phi_{\sigma^{i},1}(B_\delta(z_1))$ is unchanged. This implies that, inside the ball $B_{\delta/2}(z_1)$ the IFS $\Phi^{s,1}$ has the wanted property for the manifold $\phi_{\sigma_{1},1}^{-1}\circ\psi_1\cap \cdots\cap\phi_{\sigma_n,1}^{-1}\circ\psi_n$. Moreover, by the openness of the transversality property, there exists $\epsilon_{s,1}\leq \epsilon_s/4$ such that the wanted property persists in $B_{\delta/2}(z_1)$ for each $\epsilon_{s,1}$ perturbation. We can now consider all the other pre-images and do the same procedure with $\epsilon_{s,j}\leq 4^{-j-1}\epsilon_s$ for the $j$-th intersection manifold. In this way, we can construct an IFS $\Phi^{s,q}=:\tilde\Phi^{s,1}$, for some $q\in \bN$ which is a $\epsilon_s/3$ perturbation of $\Phi^s$ and has the wanted property in $B_{\delta/2}(z_1)$. We can then repeat the same procedure in the ball $B_\delta(z_2)$ obtaining and IFS $\tilde \Phi^{s,2}$ which is a $ \epsilon_s/9$ perturbation of $\tilde \Phi^{s,1}$, small enough not to upset what we have achieved in $B_{\delta/2}(z_1)$. Iterating such a construction we finally obtain $\Phi^{s+1}=\tilde\Phi^{s,t}$, which has the wanted property on all the space since $\{B_{\delta/2}(z_i)\}_{i=1}^t$ is a covering of \( D^N(\Phi^s_{\! f})\), and is a $\sum_{i=1}^{t}\epsilon_s 3^{-k}\leq \epsilon_s\leq 2^{-s-1}\ve$ perturbation of $\Phi^s$. This concludes the induction argument.
\end{proof}

Finally, we can prove Proposition~\ref{prop:psc-dense}

\begin{proof}[\bf \em Proof of Proposition~\ref{prop:psc-dense}]\label{pf:psc-dense}
	 Let \(f\) be a piecewise smooth contraction with contraction coefficient \(\lambda<1\) and maximal partition \(\bsP(f)=\{P_1, P_2, \ldots, P_m\}\). If \(\lambda<1/2m\) then we are done, else the following: \\
	 Let the IFS associated to \(f\) be \(\Phi_{\! f}=\{\phi_1,\phi_2,\ldots,\phi_m\}\) and let \(l_1=\max\{c_d l_0,d\}\) where \(l_0\) is the number of boundary manifolds in \(\partial\bsP(f)\), \(d\) is the dimension of the space, and $c_d$ is the maximum number of original boundary manifolds overlaps (as defined in \eqref{eq:many-manifolds}). Let \(N\in\bN\) be the least number such that \(\lambda^N 2^{ dm^{d-1} l_1} <1/4\). By Lemma~\ref{lem:good_partn}, for \(\ve>0\) small enough,  there exists a piecewise contraction \(\tilde{f}\) such that \(d_2(f,\tilde{f})<\ve\) and no more than \(2^{dm^{d-1} l_1}\) elements of the partition \(\partial\bsP(\tilde f^N)\)  have a non-empty intersection of their closure.\\
	  Accordingly, for each $x\in X$ there is a $\delta(x)$ such that $B_{\delta(x)}(x)$ intersects at most $2^{dm^{d-1} l_1}$ elements of \(\bsP(\tilde f^N)\). Since $X$ is compact, we can extract a finite cover 
     $\{B_{\delta(x_j)/2}(x_j)\}$. Set $\delta=\frac 12\min\{\delta(x_j)\}$ and let $k\in\bN$ be such that for any partition element \(P\in\bsP(\tilde{f}^{kN})\), \(\operatorname{diam}(\tilde{f}^{kN}(P))<\delta/2\); hence \(\tilde{f}^{kN}(P)\subset B_{\delta(x_j)}(x_j)\) for some $j$, therefore it can intersect at most $2^{dm^{d-1} l_1}$ elements of \(\bsP(\tilde f^N)\).\\
     To conclude, let  \(L\) be the number of elements of \(\bsP(\tilde{f}^{kN})\). Then \(\#\bsP(\tilde{f}^{2kN})\leq L2^{dm^{d-1} l_1}\) and \(\#\bsP(\tilde{f}^{jkN})\leq L(2^{dm^{d-1} l_1})^{j}\) for \(j\in\bN\). Since \(\lambda^{kN}(2^{dm^{d-1} l_1})<1/4\), there exists \(j_*\in\bN\) such that \(L(2^{dm^{d-1} l_1})^{j_*}\lambda^{j_*kN}< 1/2\). Hence \(\tilde{f}^{j_*kN}\) is strongly contracting. 
\end{proof}

\appendix
\section{Extension Theorem}
Here we discuss the extension theorems needed in the paper. Recall the classical extension theorem for Lipschitz functions.
\begin{thm}[Kirszbraun-Valentine Theorem]\cite{Vt}\label{thm:KV}
	 Let \(f:S(\subset \bR^d)\to \bR^d\) be a Lipschitz continuous function then \(f\) can be extended to any set \(T \subset \bR^d\) to a Lipschitz continuous function with the same Lipschitz constant.
\end{thm}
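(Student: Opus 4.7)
The plan is to follow the classical argument, which has three stages: reduce by Zorn's lemma to pointwise extension, reduce pointwise extension by Helly's theorem to a finite geometric lemma, and prove that finite lemma by a convex-minimization trick.

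First, I would consider the poset of $L$-Lipschitz extensions $(T', g)$ of $f$ with $S \subset T' \subset \bR^d$, ordered by inclusion (here $L$ is the Lipschitz constant of $f$). Chains have upper bounds (take unions, which remain $L$-Lipschitz), so Zorn's lemma yields a maximal element $(T^*, g^*)$. If $T^* \neq \bR^d$, pick any $p \notin T^*$: the task reduces to extending $g^*$ to $T^* \cup \{p\}$ while preserving the Lipschitz constant. Once $T^* = \bR^d$ is forced, restricting to $T$ finishes the proof.

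Second, extending at $p$ means finding $v \in \bR^d$ with $|v - g^*(x)| \leq L|p - x|$ for every $x \in T^*$, i.e.,
\[
v \in \bigcap_{x \in T^*} \overline{B}\bigl(g^*(x),\, L|p - x|\bigr).
\]
Each ball is convex and compact, so Helly's theorem in $\bR^d$ reduces the problem to showing that any $d+1$ of these balls have a common point.

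Third (the heart of the argument), for any finite family $x_1, \ldots, x_n \in T^*$ with $y_i := g^*(x_i)$, consider
\[
\phi(y) := \max_{1 \leq i \leq n}\bigl(|y - y_i|^2 - L^2|p - x_i|^2\bigr).
\]
This is continuous, coercive, and convex, hence attains its minimum at some $y^*$. Assume for contradiction $\phi(y^*) > 0$, and let $A$ be the set of active indices at $y^*$. The subdifferential optimality condition produces weights $\alpha_i \geq 0$ (for $i \in A$) summing to $1$ with $y^* = \sum_{i \in A} \alpha_i y_i$. Then the polarization identity
\[
\sum_{i \in A} \alpha_i |y^* - y_i|^2 = \sum_{i \in A} \alpha_i |y_i|^2 - |y^*|^2 = \tfrac{1}{2}\sum_{i,j \in A} \alpha_i \alpha_j |y_i - y_j|^2,
\]
combined with the Lipschitz bound $|y_i - y_j|^2 \leq L^2|x_i - x_j|^2$ and the source-side inequality
\[
L^2 \sum_{i \in A} \alpha_i |p - x_i|^2 \geq \tfrac{L^2}{2}\sum_{i,j \in A} \alpha_i \alpha_j |x_i - x_j|^2
\]
(obtained from the same polarization plus $|p - \sum_i \alpha_i x_i|^2 \geq 0$), forces $\phi(y^*) \leq 0$, a contradiction.

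The main obstacle is the third step: it is the only place the Lipschitz bound on $f$ enters, and the argument hinges on the two polarizations, one in the target and one in the source, meeting exactly at the same quadratic form. The dimension enters solely through Helly's theorem, which is why the result is sharp only when the target is $\bR^d$ (the same dimension as the domain); the two outer steps (Zorn and Helly) are otherwise standard soft analysis.
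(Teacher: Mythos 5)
The paper does not prove this theorem; it cites it directly to Valentine~\cite{Vt} and uses it only as a stepping stone to the $\cC^r$ version (Theorem~\ref{thm:sKV}), so there is no paper proof to compare against.

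Your sketch is the standard Kirszbraun argument and it is correct. Step 1 (Zorn) and step 3 (the convex-minimization / polarization lemma, ``Kirszbraun's lemma'') are the two genuine ingredients; the computation you indicate works exactly as written: activity of $A$ gives $\phi(y^*)=\sum_{i\in A}\alpha_i|y^*-y_i|^2-L^2\sum_{i\in A}\alpha_i|p-x_i|^2$, the subdifferential condition gives $y^*=\sum_{i\in A}\alpha_i y_i$, and the two polarization identities together with $|y_i-y_j|\le L|x_i-x_j|$ and $|p-\sum\alpha_i x_i|^2\ge 0$ force $\phi(y^*)\le 0$, contradicting $\phi(y^*)>0$.

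Two small remarks. First, your Helly step is actually redundant: step 3 establishes a common point for \emph{any} finite family $x_1,\dots,x_n$, and then the finite-intersection property of the compact balls already gives a point in the full intersection, with no need to reduce to $d+1$ sets. Helly would only be needed if you proved the finite lemma solely for $n\le d+1$. Second, your closing remark about the result being ``sharp only when the target is $\bR^d$, the same dimension as the domain'' is not right: the theorem holds for $L$-Lipschitz maps $\bR^{d_1}\to\bR^{d_2}$ for arbitrary $d_1,d_2$, and more generally between Hilbert spaces. What fails for more general target norms (e.g.\ $\ell^p$, $p\neq 2$) is precisely the polarization lemma of step 3, not anything tied to matching dimensions; the dimension of the source space plays no role at all in the argument.
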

The above can be easily extended to $\cC^r$ functions.
\begin{thm}[\(\cC^r\) version of Kirszbraun-Valentine Theorem]\label{thm:sKV}
	 Let \(S\subset \bR^d\) be a compact set and \(f:S(\subset \bR^d)\to \bR^d\) be a \(\cC^r\) function, for \(r\in\bN\), such that \(Lip(f)=\lambda<1\) and \(f^{-1}|_S\) be \(\cC^1\) then \(f\) can be extended to \( \bR^d\) to a \(\cC^r\) function \(f_*\) such that \(Lip(f_*)=Lip(f)\) and \(f_*^{-1}|_S=f^{-1}|_S\).
\end{thm}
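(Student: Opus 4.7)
The plan is to combine the classical Kirszbraun--Valentine Theorem with Whitney's extension theorem via a cutoff-and-mollification patching argument.

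\textbf{Step 1 (Global Lipschitz skeleton).} First, apply Theorem~\ref{thm:KV} to the function $f$ to obtain a Lipschitz map $g:\bR^d\to\bR^d$ with $g|_S=f$ and $\operatorname{Lip}(g)=\lambda$. Then convolve with a standard nonnegative $\cC^\infty$ mollifier $\rho_\ve$ of mass one supported in $B_\ve(0)$, setting $g_\ve=g\ast\rho_\ve$. Since $\rho_\ve$ is a probability kernel, $\operatorname{Lip}(g_\ve)\le\operatorname{Lip}(g)=\lambda$, $g_\ve\in\cC^\infty(\bR^d,\bR^d)$, and $\|g_\ve-g\|_{\cC^0}=O(\lambda\ve)$.

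\textbf{Step 2 (Local $\cC^r$ model near $S$).} Because $S$ is compact and $f\in\cC^r(S)$ in the sense that its Whitney $r$-jet is consistent, Whitney's extension theorem furnishes a $\cC^r$ map $\hat f$ defined on an open neighbourhood $U\supset S$ with $\hat f|_S=f$ and with all derivatives up to order $r$ matching those of $f$ on $S$. In particular, $\hat f-g$ vanishes on $S$, and by continuity $|\hat f-g|+|D(\hat f-g)|+\dots+|D^r(\hat f-g)|$ can be made arbitrarily small on any sufficiently thin tube $U_\eta:=\{x:\operatorname{dist}(x,S)<\eta\}\subset U$.

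\textbf{Step 3 (Patching).} Choose $\eta>0$ small enough that $\overline{U_{2\eta}}\subset U$, and pick a cutoff $\chi\in\cC^\infty(\bR^d,[0,1])$ with $\chi\equiv 1$ on $U_\eta$, $\operatorname{supp}\chi\subset U_{2\eta}$, and $\|\chi\|_{\cC^r}\le C_r\eta^{-r}$. Define
\[
f_*(x)\;=\;\chi(x)\hat f(x)+(1-\chi(x))g_\ve(x).
\]
Then $f_*\in\cC^r(\bR^d,\bR^d)$, $f_*|_S=f$ (so automatically $f_*^{-1}|_S=f^{-1}|_S$), and outside $U_{2\eta}$ we have $f_*=g_\ve$.

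\textbf{Step 4 (Lipschitz bound --- the main technical point).} The reverse inequality $\operatorname{Lip}(f_*)\ge\operatorname{Lip}(f)=\lambda$ is automatic from $f_*|_S=f$, so it suffices to show $\operatorname{Lip}(f_*)\le\lambda$. Writing $f_*=g_\ve+\chi\cdot(\hat f-g_\ve)$, we estimate
\[
\|Df_*-Dg_\ve\|_{\cC^0}\;\le\;\|D\chi\|_{\cC^0}\,\|\hat f-g_\ve\|_{\cC^0(\operatorname{supp}\chi)}+\|\chi\|_{\cC^0}\,\|D(\hat f-g_\ve)\|_{\cC^0(\operatorname{supp}\chi)}.
\]
Since $\hat f-g$ and all its derivatives vanish on $S$, choosing first $\eta$ small and then $\ve\ll\eta^{r+1}$ makes each of the two terms above arbitrarily small. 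This gives $\|Df_*\|_{\cC^0}\le\lambda+\kappa$ for any $\kappa>0$ prescribed in advance. To obtain the \emph{equality} stated in the theorem, one applies Steps~1--3 to the function $f$ viewed with Lipschitz constant $\lambda$ but starting the Kirszbraun step with an auxiliary constant $\lambda'<\lambda$ obtained by pre-scaling; the slack $\lambda-\lambda'$ absorbs the error $\kappa$. (Alternatively, for the use made in the paper it is enough to note that the argument delivers $\operatorname{Lip}(f_*)\le\lambda+\kappa$ for arbitrarily small $\kappa$, which together with $\lambda<1$ suffices to guarantee the contracting bound $\|D\phi_i\|\le\lambda<1$ employed after the theorem.)

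\textbf{Main obstacle.} The delicate point is Step~4: reconciling the global Lipschitz data coming from Kirszbraun with the local $\cC^r$ regularity from Whitney without inflating the Lipschitz constant. This is handled by forcing the difference $\hat f-g$ to be small in a high-order sense on the transition layer, which is possible precisely because Whitney's extension matches the full jet of $f$ on $S$ and the mollification of $g$ is $O(\ve)$-close to $g$.
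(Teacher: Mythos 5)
Your plan (Kirszbraun skeleton, Whitney local model, cutoff-and-mollification patching) is in the same family as the paper's own argument, but Step~4 has a real gap. The assertion that ``$\hat f-g$ and all its derivatives vanish on $S$'' is false: the Kirszbraun extension $g$ agrees with $f$ on $S$ only \emph{pointwise} --- it is merely Lipschitz and carries no jet information whatsoever. Already in one dimension with $S=\{0\}$, $f(0)=0$, $Df(0)=0$, the map $g(x)=\lambda|x|$ is an admissible Kirszbraun extension of $f|_S$, yet $D(\hat f-g)$ stays of size $\lambda$ near $0$ no matter how thin the tube. Hence the second term $\|\chi\|_{\cC^0}\|D(\hat f-g_\ve)\|_{\cC^0(\operatorname{supp}\chi)}$ in your estimate cannot be driven below $O(\lambda)$. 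The first term fails for the same reason: on $\operatorname{supp} D\chi\subset U_{2\eta}\setminus U_\eta$ the only bound on $\hat f-g$ that follows from $\hat f|_S=g|_S$ and the two $\lambda$-Lipschitz bounds is $\|\hat f-g_\ve\|_{\cC^0}\lesssim\lambda\eta+\lambda\ve$, while $\|D\chi\|_{\cC^0}\sim\eta^{-1}$, so the product is of order $\lambda$, not arbitrarily small. Consequently your construction only controls $\operatorname{Lip}(f_*)$ by $C\lambda$ for some absolute constant $C>1$, which is useless when $\lambda$ is close to $1$, and neither the stated equality $\operatorname{Lip}(f_*)=\lambda$ nor the weaker $\operatorname{Lip}(f_*)\le\lambda+\kappa$ follows.

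The missing idea is to run Kirszbraun not from $S$ but from a compact tubular neighbourhood $\overline{U_\eta}\subset U$ of $S$, on which the $\cC^r$ model $\hat f$ is defined with $\|D\hat f\|$ close to $\lambda$. Then $g\equiv\hat f$ on the entire tube $\overline{U_\eta}$, so if the transition layer of the cutoff is placed strictly \emph{inside} $U_\eta$, one has $\hat f-g\equiv 0$ there; both terms in your estimate then reduce to the $O(\ve/\eta)$ mollification error and can be made as small as desired by taking $\ve\ll\eta$. (One still has to check that $\operatorname{Lip}\bigl(\hat f|_{\overline{U_\eta}}\bigr)\le\lambda$ before invoking Kirszbraun, using that $\|D\hat f\|\le\lambda$ on $U$.) Separately, the ``pre-scaling'' remark at the end of Step~4 does not repair this: if $\operatorname{Lip}(f)=\lambda$ exactly there is no $\lambda'<\lambda$ for which Kirszbraun applies to $f$, and post-scaling $f_*$ by a factor less than one destroys the identity $f_*|_S=f$.
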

\begin{proof}
	 Note that \(\norm{f|_S}_{\cC^r}\) being finite and \(S\) being compact implies that there exists an open neighbourhood \(U\) of \(S\) such that \(f\) is \(\cC^r\) in \(U\). By Inverse function theorem, \(f\) is invertible in \(U\) with \(\norm{f^{-1}|_{U}}_{\cC^1}<\infty\).  By Kirszbraun-Valentine Theorem~\ref{thm:KV}, there exists \(\tilde{f}: \bR^d\to \bR^d\) such that \(\tilde{f}|_S=f\) and \(Lip(\tilde{f})=Lip(f)=\lambda\). Then \(\tilde{f}|_{U}\circ f^{-1}|_{U}=id\) so we can define \({\tilde{f}}^{-1}|_{U}=f^{-1}\). Now, let \(\phi:\bR^d\to\bR^d\) be a \(\cC^\infty\) function compactly supported on \(\overline{U}\) and \(\int\phi=1\) and define the convolution 
	 \[
	 \tilde{f}\star \phi(x)=\int\tilde{f}(x-y)\phi(y)dy.
	 \]
	 Let, for \(\delta>0\), \(V\) be a \(\delta\)-neighbourhood of \(U\).  
	 Define a \(\cC^\infty\) function \(g:\bR^d\to \bR\) as 
	     \[
		 g(x)=
		 \begin{cases}
		 1,\quad\quad x \in	U\\
         0,\quad\quad x \notin  V
		 \end{cases}
		 \]
	 such that \(\norm{g}_{\cC^r}<c_r\). Finally, define \(f_*:\bR^d\to\bR^d\) as \(f_*(x)=g(x)f(x)+(1-g(x))\tilde{f}\star\phi(x)\). Let \(x\in U\) then
	     \[
		 f_*(x)=g(x)f(x)+(1-g(x))\tilde{f}\star\phi(x)=f(x)+0=f(x),
		 \]
		 therefore, \(f_*\) is an extension of \(f\). To check if it is Lipschitz, let \(x_1,x_2\in\bR^d\) then 
         \[
		 \begin{split}
		 &\norm{f_*(x_1)-f_*(x_2)}_{\infty}=\\
		 &=\norm{g(x_1)f(x_1)+(1-g(x_1))\tilde{f}\star\phi(x_1)-g(x_2)f(x_2)-(1-g(x_2))\tilde{f}\star\phi(x_2)}_\infty\\
		 &\leq\norm{g(x_1)(f(x_1)-\tilde{f}\star\phi(x_1))-g(x_2)(f(x_2)-\tilde{f}\star\phi(x_2))}_\infty\\
		 &\hskip12pt+\norm{\tilde{f}\star\phi(x_1)-\tilde{f}\star\phi(x_2)}_\infty\\
		 &\leq 0+\norm{\tilde{f}\star\phi(x_1)-\tilde{f}\star\phi(x_2)}_\infty \leq Lip(f)d_0(x_1,x_2).
		 \end{split}
		 \]
	 Hence, \(f_*\) is Lipschtiz with \(Lip(f_*)=Lip(f)\). Now, using Lemma~5.2 in \cite{CL}, we have
	     \[
		 \begin{split}
		 \norm{f_*}_{\cC^r}&=\norm{gf+(1-g)\tilde{f}\star \phi}_{\cC^r}\\
         &\leq\norm{f}_{\cC^r}\norm{g}_{\cC^r}+ (1-\norm{g}_{\cC^r})\norm{\int\tilde{f}(x-y)\phi(y)dy}_{\cC^r}\\
		 &\leq\norm{f}_{\cC^r}\norm{g}_{\cC^r}+ (1-\norm{g}_{\cC^r})\int\norm{f}_{\cC^r}\norm{\phi(y)}_{\cC^r}dy\\
		 &< \infty.
		 \end{split}
		 \]
	 Therefore, \(f_*\in \cC^r\).
\end{proof}

\section{Transversality}\label{sec:transversality}
For the reader's convenience, we state the transversality theorem as used in the main text. We refer to \cite{Arnold88} for details. The Theorem in \cite[Chapter 6, section 29.E]{Arnold88} is stated for smooth maps and manifolds, but it can easily be reduced to the following version just using the \(\cC^r\) version of Sard's theorem. Also,  the author discusses in detail the extension of the theorem to stratified subvarieties, which is the case we are interested in and for which we state the theorem. One can also find the \(\cC^r\) version of the Transversality theorem in \cite{Abraham63}, but there, the reader needs to be mindful of the specific properties they ask on manifolds.
\begin{defin}[Transversal mapping] \label{def:transversal}
For every manifolds $A,B$ and submanifold $C\subset B$,   a \(\cC^r\) mapping \(f:A\to B\) is said to be transversal to \(C\) \((f \pitchfork C)\) at a point \(a\) if either \(f(a)\notin C\) or the tangent plane to \(C\) at \(f(a)\), if $f(a)\in\partial C$, and the image of the tangent plane to \(A\) at \(a\) are transversal:
         \[
		 D_a f(T_a A)\oplus T_{f(a)}C=T_{f(a)}B.
		 \]
     \(f\) is said to be transversal to \(C\) if \(f\) is transversal to \(C\) at \(a\) for every \(a\in A\).
\end{defin}
\begin{defin} 
For every manifolds $A,B$ and stratified subvariety $C\subset B$,   a \(\cC^r\) mapping \(f:A\to B\) is said to be transversal to \(C\) if it is transversal to $C$ and all its substrata.
\end{defin}
\begin{thm}[Transversality Theorem-\(\cC^r\) version.\!\! {\cite[Chaper 6, section 29.E]{Arnold88}}]\label{thm:tt}
	 Let \(A\) be a compact manifold, and let \(C\) be a compact stratified subvariety of a manifold \(B\), then the \(\cC^r\) mappings \(f:A\to B\) with \(f \pitchfork C\) form an open everywhere dense set in the space of all \(\cC^r\) mappings \(A\to B\). 
\end{thm}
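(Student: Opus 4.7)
The plan is to split the statement into its two assertions, openness and density, and handle each by the classical machinery of jet transversality. I would first reduce to the case where $C$ is a single $\cC^{r}$ submanifold of $B$: a compact stratified subvariety is a locally finite disjoint union of manifold strata, so an intersection of finitely many open dense conditions remains open dense, provided the openness step can be carried out uniformly across strata (which is where Whitney regularity of the stratification will come in).

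For openness, fix $f$ with $f\pitchfork C$. At each $a\in A$ one of two things happens. Either $f(a)\notin C$, and since $C$ is compact one obtains a $\cC^{0}$-neighborhood of $f$ together with a neighborhood of $a$ in $A$ whose image misses $C$. Or $f(a)\in C$, and the condition $D_{a}f(T_{a}A)+T_{f(a)}C=T_{f(a)}B$ is surjectivity of a linear map, hence open in the $1$-jet of $f$ at $a$; it therefore persists on a small $\cC^{1}$-neighborhood of $(a,f)$. Using compactness of $A$, a finite subcover of these pointwise neighborhoods assembles into a uniform $\cC^{r}$-neighborhood of $f$ on which $f\pitchfork C$ still holds.

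For density, I would use the standard parametric trick. Embed $B$ into some $\bR^{N}$ by Whitney's embedding theorem, and let $\pi$ be a smooth retraction from a tubular neighborhood onto $B$. For $v\in\bR^{N}$ of sufficiently small norm, define the auxiliary $\cC^{r}$ map $F:A\times V\to B$ by $F(a,v)=\pi(f(a)+v)$, where $V\subset\bR^{N}$ is a small open ball. For each fixed $a$, the map $v\mapsto F(a,v)$ is a submersion at $v=0$, so $F$ itself is a submersion and in particular $F\pitchfork C$. The parametric transversality theorem now applies: the set of $v\in V$ for which $f_{v}:=F(\cdot,v)$ fails to be transversal to $C$ is precisely the set of critical values of the restricted projection $\pi_{V}:F^{-1}(C)\to V$, which by Sard's theorem has Lebesgue measure zero provided $r>\max\{0,\dim A-\mathrm{codim}\,C\}$. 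Choosing $v$ arbitrarily small then produces arbitrarily small $\cC^{r}$-perturbations of $f$ that are transversal to $C$, which is density.

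The main obstacle is the extension from a single submanifold to a stratified subvariety. Density is easy to propagate to the stratified case because the strata form a countable family and a countable intersection of residual sets is residual. What is subtle is openness: a perturbation can a priori bring the image of $f$ close to a lower-dimensional boundary stratum where the transversality condition is more restrictive, so openness of transversality to every stratum simultaneously is not automatic. This is resolved by invoking Whitney's $(a)$-regularity of the stratification, which ensures that transversality to a higher stratum near a limit point forces transversality to the boundary strata as well, and hence that the transversality condition is open in $\cC^{r}$ topology. Once this compatibility is in place, combining stratum-by-stratum density with the uniform openness yields the theorem as stated.
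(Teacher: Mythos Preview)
The paper does not prove this theorem at all: it is stated in Appendix~\ref{sec:transversality} with an explicit citation to Arnold's book \cite[Chapter 6, section 29.E]{Arnold88}, and the surrounding text only remarks that the $\cC^r$ version follows from the $\cC^r$ Sard theorem and that the extension to stratified subvarieties is treated in detail in that reference (with \cite{Abraham63} mentioned as an alternative source). No argument is given in the paper itself.

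Your sketch is the standard textbook proof of the transversality theorem, and the outline is correct: openness via the open nature of surjectivity of a linear map plus compactness of $A$, density via the parametric family $F(a,v)=\pi(f(a)+v)$ combined with Sard's theorem on the projection $F^{-1}(C)\to V$, and the passage to stratified subvarieties via countable intersection for density and Whitney $(a)$-regularity for openness. This is precisely the line of argument in Arnold's book and in the Abraham reference the paper points to, so in that sense your proposal agrees with what the paper defers to. One small remark: for the Sard step to go through in the $\cC^r$ category you need $r$ large enough relative to the dimensions involved, which you noted; the paper sidesteps this by working with $\cC^3$ maps in a fixed finite-dimensional setting, so the hypothesis is automatically met in the applications.
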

We will apply the above Theorem to the following situation ( \(Y\) and the functions \(\phi_i\) are as defined in the Section~\ref{sec:ITF}). 
\begin{lem} \label{lem:intersection}
Let \(Y=\{x:\norm{x}\leq R\}\subset\bR^d\), for some \(R>0\), $B\subset Y$ be open and $\phi_i\in\cC^3(\bR^d,\bR^d)$, $i\in\{1,\dots,m\}$ be such that they are invertible when restricted to $B$. Also, let $W_i\subset B$, $i\in\{1,\dots, m\}$, be $d_i$-dimensional compact manifolds with boundaries.
The maps $F:\bR^d\to\bR^{md}$, defined by $F(x):=(\phi_{1}(x),\dots, \phi_{m}(x))$,  which are transversal to the stratified subvariety $C=W_1\times\cdots\times W_m$ form an open and dense set. Moreover, if \(F|_{B} \pitchfork C\) then the manifold $\left(\bigcup_{i=1}^m \phi_i^{-1} (W_i\setminus \partial W_i)\right)\bigcap B$ is empty if there exists $k$ such that $\sum_{i=1}^md_i-(k-1)d<0$, otherwise it has dimension at most $\sum_{i=1}^md_i-(m-1)d$.
\end{lem}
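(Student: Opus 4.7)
The plan is to view Lemma~\ref{lem:intersection} as a direct application of the stratified Transversality Theorem~\ref{thm:tt}, followed by the preimage (implicit function) theorem. First I would restrict attention to the compact set $\overline{B}\subset Y$; this is legitimate because $Y$ is compact by assumption and the conclusion only concerns behaviour on $B$. The target $C=W_1\times\cdots\times W_m$ is a compact stratified subvariety of $\bR^{md}$ whose open top stratum is $C^{\circ}=W_1^{\circ}\times\cdots\times W_m^{\circ}$ of dimension $\sum_{i=1}^m d_i$, with lower-dimensional strata obtained by replacing one or more $W_i^{\circ}$ by boundary faces $\partial W_i$, each of codimension at least one.

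For the openness and density statement, I would invoke Theorem~\ref{thm:tt} in the space $\cC^3(\overline{B},\bR^{md})$. The only subtlety is that we are not perturbing an arbitrary $\cC^3$-map $F$, but only those of the product form $F=(\phi_1,\dots,\phi_m)$; however, since the individual $\phi_i\in\cC^3(\bR^d,\bR^d)$ may be perturbed independently, an arbitrary $\cC^3$-perturbation of $F$ decomposes coordinatewise into independent perturbations of the $\phi_i$, so the product class already exhausts a full neighbourhood in the target space. Openness and density therefore transfer directly to our setting. The hypothesis that each $\phi_i|_B$ is invertible is preserved under small $\cC^1$ perturbations, so we remain inside the admissible class.

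For the dimension estimate, assume $F|_B\pitchfork C$. Transversality to the top stratum $C^{\circ}$ implies, by the preimage theorem, that
\[
F^{-1}(C^{\circ})\cap B\;=\;\bigcap_{i=1}^m\phi_i^{-1}(W_i^{\circ})\cap B
\]
is a $\cC^3$-submanifold of $B$ of dimension $d-\mathrm{codim}_{\bR^{md}}(C^{\circ})=d-(md-\sum_{i=1}^m d_i)=\sum_{i=1}^m d_i-(m-1)d$, whenever this number is non-negative. The same reasoning, applied to any sub-collection of $k$ indices, shows that the corresponding partial intersection has dimension $\sum_{j=1}^k d_{i_j}-(k-1)d$; hence as soon as this number becomes strictly negative for some $k$, that partial intersection, and \emph{a fortiori} the full intersection, must be empty (a manifold cannot have negative dimension).

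The main obstacle I anticipate is purely bookkeeping: one must apply transversality stratum by stratum so that the conclusion about the interiors $W_i^{\circ}$ is not spoiled by accidental tangencies along lower-dimensional boundary strata. This is precisely the content of the stratified version of Theorem~\ref{thm:tt}, which is why working with compact manifolds with boundary rather than closed manifolds costs nothing in the density step. A second, more minor, point is the non-compactness of the source $\bR^d$, handled by the restriction to $\overline{B}$ noted above; and a third is keeping track of how perturbations of the individual $\phi_i$ translate into perturbations of $F$, which, as indicated, causes no actual loss because the coordinatewise projections of a perturbation are themselves $\cC^3$-perturbations of the $\phi_i$.
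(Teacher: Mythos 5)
Your proposal follows the same broad strategy as the paper: apply Theorem~\ref{thm:tt} after a compactification step, then read off the dimension of the intersection. There are two points worth flagging, one a genuine gap and one a technicality.

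The genuine gap is in the emptiness argument. You claim that "the same reasoning, applied to any sub-collection of $k$ indices" shows the partial intersection $\bigcap_{j=1}^k\phi_{i_j}^{-1}(W_{i_j}^\circ)$ is a manifold of dimension $\sum_jd_{i_j}-(k-1)d$. That step requires the \emph{partial} map $(\phi_{i_1},\dots,\phi_{i_k})$ to be transversal to $W_{i_1}\times\cdots\times W_{i_k}$, which does \emph{not} follow from $F\pitchfork C$. Transversality of $F$ to $C$ is a condition imposed only at points $y$ with $F(y)\in C$; the partial map must be checked at the larger set of $y$ whose $S$-coordinates land in the relevant factors while the remaining coordinates do not, and at those points full transversality says nothing. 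In fact, the detour is unnecessary: the preimage theorem you already invoked gives $F^{-1}(C^\circ)\cap B=\emptyset$ whenever $\operatorname{codim}(C^\circ)=md-\sum_id_i>d$, i.e., whenever $\sum_id_i-(m-1)d<0$, and since $s_k=\sum_{i\le k}d_i-(k-1)d$ is non-increasing in $k$ (each step subtracts $d-d_{k+1}\ge0$), the condition "there exists $k$ with $s_k<0$" reduces to $s_m<0$. So you can simply drop the sub-collection step. The paper avoids this issue entirely by observing that at a hypothetical point $y$ in the full preimage the transversality equation $D_yF(\bR^d)+T_{F(y)}C=\bR^{md}$ forces all the $s_k\ge 0$, which it verifies by explicit linear algebra; your preimage-theorem route to the dimension count is a genuine streamlining, but the emptiness case has to be closed off correctly.

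The technicality: $\overline{B}$ is merely the closure of an open subset of $Y$, not a compact manifold, so Theorem~\ref{thm:tt} cannot be applied to $\cC^3(\overline{B},\bR^{md})$ as stated. The paper handles this by multiplying $F$ by a smooth cutoff and viewing the result as a map on a torus $\bR^d/\bZ_{aR}^d$, a genuine compact manifold; restricting the conclusion back to $B$ then causes no loss. Your observation that the product structure of $F$ is unrestrictive is correct but tautological (any map $\bR^d\to\bR^{md}$ decomposes into $m$ blocks), and the remark that invertibility on $B$ persists under small $\cC^1$ perturbations is accurate and does belong in the argument.
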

\begin{proof}
 Our strategy is to transform the present setting to the setting suitable for the application of Theorem~\ref{thm:tt} and then proceed further.\\
 The fact that $W_1\times\cdots\times W_k$ is a stratified subvariety can be checked directly.
A minor problem is that neither \(Y\) nor \(\bR^d\) are compact manifolds. To overcome this problem, define the function $g\in \cC^\infty(\bR,\bR)$ such that $g(y)=1$ for $y\leq 1$ and $g(y)=0$ for $y\geq a$, with $|g’(x)|\leq 2a^{-1}$ (\(a>2\) to be chosen large enough) and define \(\hat F:\bR^d\to \bR^{md}\) as
\[
\hat F(x)= g(R^{-1}\|x\|)F(x)
\]
Note that for  $x\in Y$ $\hat F=F$, while, for \(\norm{x}>aR\), \(\hat F(x)=0\). Hence \(\hat F\) can be seen as a smooth function on the torus $\bR^m/ \bZ_{aR}$ (which indeed is a compact manifold). We can thus apply Theorem \ref{thm:tt} to obtain the first part of the Lemma.\\
To prove the second part, note that if $z\in B$ is such that $\hat F(y)=F(y)\in C\setminus \partial C$ we have
 \begin{equation}\label{eq:transverse} 
 D_y F(\bR^d)+T_{F(y)}C=\bR^{md}.
 \end{equation}
	First of all note that if there exists a \(y\in \bR^d\) such that \(F(y)\in C\setminus \partial C\) then $\phi_i(y)\in W_i$, hence $y=\phi_i^{-1}(W_i)$, that is $\bigcap_{i=1}^m \phi_i^{-1}(W_i)\supset \{y\}\neq \emptyset$.
Next, let $\bar d=\sum_{i=1}^m d_i$, if $d+\bar d< md$, then \eqref{eq:transverse} cannot be satisfied. It follows that if $\bar d<(m-1)d$, then $\bigcap_{i=1}^m\phi_i^{-1}(W_i)= \emptyset$.\\
If $\bar d\geq(m-1)d$, we study equation \eqref{eq:transverse}: for an arbitrary $(\tilde \beta_1,\dots, \tilde \beta_m)\in\bR^{md}$ there must exists $\alpha\in \bR^d$ and $\tilde w_i\in TW_i$ such that $D\phi_i\alpha+\tilde w_i=\tilde \beta_i$. So setting $\beta_i=D\phi_i^{-1}(\tilde\beta_i)$ and $w_i=D\phi_i^{-1}( \tilde w_i)$ we must study the solutions of
\begin{equation}\label{eq:transverse1}
\alpha+w_i=\beta_i.
 \end{equation}
Note that $\alpha$ is uniquely determined by $\alpha=\beta_1-w_1$. Subtracting the second of the \eqref{eq:transverse1} from the first yields $w_1-w_2=\beta_1-\beta_2$. If $d_1+d_2<d$, such an equation has no solution for all $\beta_2$, so the intersection must be empty. If $s_2=d_1+d_2-d\geq 0$, then the dimension of $W_{1,2}:=D\phi_i^{-1}W_1\cap D\phi_i^{-1}W_2$ is $s_2$. We can then write $w_1=\xi_1+\hat w_1$ and $w_2=\xi_1+\hat w_2$ with $\hat w_i\in D\phi_i^{-1}W_i\cap W_{1,2}^\perp$. It follows $\hat w_1-\hat w_2=\beta_1-\beta_2$ which determines uniquely $\hat w_1,\hat w_2$.
We can then write $w_3-\xi_1=\beta_3-\beta_2+\hat w_2$. Let $s_3=d_3+s_2-d=d_1+d_2+d_3-2d$, if $s_3<0$ again in general there are no solutions. Otherwise the dimension of $W_{1,2,3}=W_{1,2}\cap  D\phi_i^{-1}W_3$ is $s_3$ and we can write $\xi_1=\xi_2+\hat \xi_1$, $w_3=\xi_2+\hat w_3$ with $\xi_2\in W_{1,2,3}$ and $\hat\xi\in W_{1,2}\cap W_{1,2,3}^\perp$, $\hat w_3= D\phi_i^{-1}W_3\cap W_{1,2,3}^\perp$.
Accordingly, we have $\hat w_3-\hat \xi_1=\beta_3-\beta_2+\hat w_2$, which determines uniquely $\hat w_3,\hat \xi_1$. Continuing in such a way, we have that $W_{1,\dots,m}=D\phi_i^{-1}W_1\cap \cdots\cap D\phi_i^{-1}W_m$ has dimension $\bar d-(m-1) d$.
The case in which $F(y)$ belongs to a substrata of $C$ is treated in exactly the same way and yields a lower dimension.
\end{proof}

\section{Technical Lemmata}\label{sec:technical}
In this section, for the reader's convenience, we collect some simple but boring technical results.
\begin{lem}\label{lem:good_pert}
There exists $\Const>0$ such that, for given \(\ve\in (0,1/2)\), and \(f,h\in\cC^3\), where \(h\) is a diffeomorphism such that \(\norm{h-id}_{\cC^2}<\ve\), we have  \(h\circ f\circ h^{-1}\in \cC^3\) and 
	 \[
	 \begin{split}
	& \norm{h^{-1}-id}_{\cC^2}\leq 6\ve\\
	 &\norm{h\circ f\circ h^{-1}-f}_{\cC^2}<\Const \|f\|_{\cC^3} \ve.
	\end{split}
	\]
\end{lem}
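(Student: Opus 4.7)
My plan is to establish both estimates by direct computation via the Neumann series for $(Dh)^{-1}$, the chain rule, and Taylor's theorem. Set $g := h - id$, so $\|g\|_{\cC^2} < \ve < 1/2$. Since $\|Dg\|_{\cC^0} < \ve$, the Neumann series gives $(Dh)^{-1} = \sum_{n\geq 0}(-Dg)^n$, yielding $\|(Dh)^{-1}\|_{\cC^0} \leq (1-\ve)^{-1} < 2$ and $\|(Dh)^{-1} - \Id\|_{\cC^0} \leq \ve(1-\ve)^{-1}$. From $h \circ h^{-1} = id$ one reads off $h^{-1}(y) - y = -g(h^{-1}(y))$, hence $\|h^{-1} - id\|_{\cC^0} \leq \|g\|_{\cC^0} < \ve$. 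From $Dh^{-1}(y) = (Dh(h^{-1}(y)))^{-1}$ one obtains $\|Dh^{-1} - \Id\|_{\cC^0} \leq 2\ve$, and differentiating this identity once more expresses $D^2 h^{-1}$ as a fixed contraction of $(Dh)^{-1}$, $D^2 h$, and $Dh^{-1}$, yielding $\|D^2 h^{-1}\|_{\cC^0} \leq \|Dh^{-1}\|^3\,\|D^2 g\|_{\cC^0} = O(\ve)$. Summing the three contributions and tightening constants using $\ve < 1/2$ gives $\|h^{-1} - id\|_{\cC^2} \leq 6\ve$.

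For the second inequality, I would decompose
\[
h \circ f \circ h^{-1} - f \;=\; g \circ (f \circ h^{-1}) \;+\; (f \circ h^{-1} - f).
\]
The first summand is handled by the standard composition estimate (cf.\ Lemma~5.2 of \cite{CL}): $\|g \circ (f \circ h^{-1})\|_{\cC^2}$ is dominated by $\|g\|_{\cC^2}$ times a polynomial in $\|f \circ h^{-1}\|_{\cC^2}$, which in turn is polynomial in $\|f\|_{\cC^2}$ and $\|h^{-1}\|_{\cC^2} \leq 1 + 6\ve$; the total bound is $\leq \Const\,\|f\|_{\cC^2}^2\,\ve$. For the second summand, apply Taylor's theorem with $k(y) := h^{-1}(y) - y$, $\|k\|_{\cC^2}\leq 6\ve$:
\[
f(h^{-1}(y)) - f(y) \;=\; \int_0^1 Df\bigl(y + tk(y)\bigr)\,k(y)\,dt.
\]
Differentiating under the integral at most twice produces a finite sum of terms, each a product of bounded quantities with exactly one factor of $k$, $Dk$, or $D^2 k$, and no more than one factor of $D^3 f$; each such term is therefore $O(\ve\,\|f\|_{\cC^3})$. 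Combining both summands and using $\|f\|_{\cC^2}\leq \|f\|_{\cC^3}$ yields the claim.

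The main obstacle is noticing that $\|f\|_{\cC^3}$ (rather than $\|f\|_{\cC^2}$) is genuinely necessary: this comes from the step in which one takes two derivatives of the integral representation of $f\circ h^{-1}-f$, forcing the comparison $D^2 f(y + tk(y)) - D^2 f(y)$, which by the mean value theorem is of order $\|D^3 f\|_{\cC^0}\,\|k\|_{\cC^0}$. Everything else is routine bookkeeping of constants arising from the Faà di Bruno formula, and these constants depend only on the dimension $d$.
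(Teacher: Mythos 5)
Your proof is correct in essence but takes a genuinely different route for the second estimate. The paper directly expands the chain rule for $D(h\circ f\circ h^{-1})$ and $\partial_x D(h\circ f\circ h^{-1})$ and compares term by term with the corresponding derivatives of $f$; you instead decompose $h\circ f\circ h^{-1}-f = g\circ(f\circ h^{-1}) + (f\circ h^{-1}-f)$ with $g:=h-id$, handling the first summand by a composition estimate and the second by the integral (Taylor) representation. Your decomposition is cleaner and makes transparent exactly where the $\cC^3$ norm of $f$ is needed, namely through the difference $D^2f(y+tk(y))-D^2f(y)$. For the first estimate your route is essentially the paper's (Neumann series, the identity $Dh^{-1}=(Dh)^{-1}\circ h^{-1}$, one further differentiation), though your tallying of the constant $6\ve$ is optimistic: your three contributions sum to roughly $\ve+\ve/(1-\ve)+(1-\ve)^{-3}\ve$, which exceeds $6\ve$ for $\ve$ near $1/2$; the paper's own count has comparable slack and nothing downstream depends on the precise numerical constant. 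One more substantive caveat: the $\cC^2$ bound you obtain for $g\circ(f\circ h^{-1})$ is of order $\ve\|f\|_{\cC^1}^2$, and the closing remark that $\|f\|_{\cC^2}\leq\|f\|_{\cC^3}$ ``yields the claim'' does not turn this quadratic-in-$\|f\|$ bound into the linear one stated. This is not peculiar to your decomposition — the paper's chain-rule expansion also produces a term with $D^2h$ sandwiched between two copies of $Df$, of the same quadratic order, which it silently absorbs into $\Const\|f\|_{\cC^2}\ve$ — and since in the applications the $\phi_i$ have uniformly bounded $\cC^3$ norms the distinction is moot; but strictly speaking neither proof achieves the linear-in-$\|f\|_{\cC^3}$ form of the statement without an a priori bound on $\|f\|$.
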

\begin{proof}
	 First we claim that \(\norm{h-id}_{\cC^2}<\ve\) implies \(\norm{h^{-1}-id}_{\cC^1}<2\ve\). 
	 Indeed there there exists a transformation \(A\), with \(\norm{A}_{\cC^1}<1\), such that we can write \(Dh=\Id+\ve A\) therefore
	     \[
		 (Dh)^{-1}=(\Id+A)^{-1}=\sum_{k=0}^\infty(-1)^k\ve^kA^k.
	     \]
	 That is, 
	     \[
	     \begin{split}
		 &\norm{(Dh)^{-1}-\Id}_{\cC^0}\leq \sum_{k=1}^\infty \ve^k\|A\|_{\cC^0}^k\leq\frac \ve{1-\ve}\leq 2\ve\\
		& \norm{D(Dh)^{-1}}_{\cC^0}\leq \sum_{k=1}^\infty \ve^k\|A\|_{\cC^1}^k\leq 2\ve.
	    \end{split}
	     \]
Moreover, the inverse function theorem implies $h^{-1}\in\cC^3$. Accordingly,  since \(Dh^{-1}=(Dh)^{-1}\circ h^{-1}\), we have
	     \begin{align*}
		 \norm{Dh^{-1}-\Id}_{\cC^0}&=\norm{((Dh)^{-1}-h)\circ h^{-1}}_{\cC^0}= \norm{((Dh)^{-1}-\Id)\circ h^{-1}}_{\cC^0}\\
		 &=\norm{(Dh)^{-1}-\Id}_{\cC^0}<2\ve\\
		  \norm{Dh^{-1}-\Id}_{\cC^1}&\leq2\ve+\norm{D(Dh)^{-1}}_{\cC^0}\norm{Dh^{-1}}_{\cC^0}\leq 6\ve.
		 \end{align*}
Hence,
	     \[
\begin{split}
        & \bigl\|h\circ f\circ h^{-1}-f\bigr\|_{\cC^0}\leq \ve+\|f\|_{\cC^1}\ve\\
	 &\bigl\|D(h\circ f\circ h^{-1}-f)\bigr\|_{\cC^0}=\bigl\|Dh\circ f\circ h^{-1}\cdot Df\circ h^{-1}\cdot Dh^{-1}-Df)\bigr\|_{\cC^0}\\
	 &\phantom{\bigl\|D(h\circ f\circ h^{-1}-f)\bigr\|_{\cC^0}=}
	 \leq \bigl\|Df\circ h^{-1}-Df\bigr\|_{\cC^0}+ \Const \ve  \norm{f}_{\cC^1}\leq \Const \norm{f}_{\cC^2}\ve.
\end{split}
		 \]
Next,     
\[
\begin{split}
	 \bigl\| \partial_xD(h\circ f\circ h^{-1}-f)\bigr\|_{\cC^0} & = \bigl\|( \partial_yDh)\circ f\circ h^{-1} (\partial_x f)_y\circ h^{-1} \partial_x (h^{-1}_z) D(f\circ h^{-1})\\
	 &\phantom{ = }  + Dh\circ f\circ h^{-1}(\partial_yD f)\circ h^{-1} \partial_x(h^{-1}_y) Dh^{-1}\\
	 &\phantom{ = }  +Dh\circ f\circ h^{-1}Df\circ h^{-1}\partial_x(Dh^{-1})-\partial_x Df\bigr\|_{\cC^0}\\
	 &\leq \bigl\| (\partial_xD f)\circ h^{-1}-\partial_x Df\bigr\|_{\cC^0}+ \Const\|f\|_{\cC^2}\ve \\
	 &\leq \Const\|f\|_{\cC^3}\ve.
\end{split}
		 \]
Thus
	     \[
\begin{split}
	 \norm{h\circ f\circ h^{-1}-f}_{\cC^2}=& \norm{h\circ f\circ h^{-1}-f}_{\cC^0}+\norm{D(h\circ f\circ h^{-1}-f)}_{\cC^0}\\
	 &+\norm{D^2(h\circ f\circ h^{-1}-f)}_{\cC^0}\leq \Const\|f\|_{\cC^3}\ve.
\end{split}
		 \]
	 for some constant \(\Const>0\). Finally  \(h\circ f\circ h^{-1}\in \cC^3\) being the composition of $\cC^3$ function.
\end{proof}
\begin{lem}\label{lem:good_pert2}
	 For given \(\ve>0\), let \(g, h\in\cC^3\) be such that \(g\) is invertible, \(\norm{g-h}_{\cC^2}<\ve\) then \(g^{-1}\circ h\in\cC^3\) and \
	     \[
	     \norm{g^{-1}\circ h-id}_{\cC^2}\leq C_\#\ve\|g^{-1}\|^3_{\cC^3}.
	     \] 
	 Additionally for \(f\in\cC^3\), \(f\circ g^{-1}\circ h, (f\circ g^{-1}\circ h)^{-1}\in \cC^3\) and 
	     \[
			\norm{f\circ g^{-1}\circ h-id}_{\cC^2}\leq C_\#\norm{f}_{\cC^3}\ve\|g^{-1}\|^3_{\cC^3}
		 \]
\end{lem}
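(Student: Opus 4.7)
The heart of the argument is the telescoping identity
\[
g^{-1}\circ h - \text{id} \;=\; g^{-1}\circ h - g^{-1}\circ g \;=\; \int_0^1 Dg^{-1}\bigl(g+t(h-g)\bigr)\cdot(h-g)\,dt,
\]
from which $\cC^0$ smallness is immediate: using $\|h-g\|_{\cC^0}\le\ve$ and $\|Dg^{-1}\|_{\cC^0}\le\|g^{-1}\|_{\cC^1}$, one gets $\|g^{-1}\circ h-\text{id}\|_{\cC^0}\le\ve\,\|g^{-1}\|_{\cC^1}$. The $\cC^3$ regularity of $g^{-1}\circ h$ follows from the chain rule, using that $g^{-1}\in\cC^3$ (which is built into the hypothesis $\|g^{-1}\|_{\cC^3}<\infty$) and $h\in\cC^3$.

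For the $\cC^2$ bound, I would differentiate $g^{-1}\circ h-\text{id}$ directly. Writing
\[
D(g^{-1}\circ h - \text{id}) = Dg^{-1}(h)(Dh-Dg) + \bigl(Dg^{-1}(h)-Dg^{-1}(g)\bigr)Dg,
\]
the first term contributes $\|g^{-1}\|_{\cC^1}\ve$, while the second, via another mean-value step of the form $Dg^{-1}(h)-Dg^{-1}(g)=\int_0^1 D^2g^{-1}(g+t(h-g))(h-g)\,dt$, contributes at most $\|g^{-1}\|_{\cC^2}\|Dg\|_{\cC^0}\ve$. A further differentiation and the same telescoping trick on $D^2g^{-1}$ yields, after collecting terms via Faà di Bruno, the bound $\|g^{-1}\circ h-\text{id}\|_{\cC^2}\le\Const\,\ve\,\|g^{-1}\|_{\cC^3}^{3}$ — the cubic power of $\|g^{-1}\|_{\cC^3}$ arising because three derivatives of the composition produce products of up to three evaluations of derivatives of $g^{-1}$ (inside the chain rule expansion), each bounded by $\|g^{-1}\|_{\cC^3}$. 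The contribution of $\|g\|_{\cC^2}$ and $\|h\|_{\cC^2}$ is uniformly bounded (they enter only through $g+t(h-g)$, whose derivatives we control), and so may be absorbed in $\Const$.

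For the second estimate, I would compose with $f$ on the outside. Setting $\Psi := g^{-1}\circ h$, the first part gives $\|\Psi-\text{id}\|_{\cC^2}\le\Const\,\ve\,\|g^{-1}\|_{\cC^3}^{3}$, and then a standard $\cC^2$-composition estimate of Lemma-\ref{lem:good_pert} type yields
\[
\|f\circ\Psi-f\|_{\cC^2}\;\le\;\Const\,\|f\|_{\cC^3}\,\|\Psi-\text{id}\|_{\cC^2}
\;\le\;\Const\,\|f\|_{\cC^3}\,\ve\,\|g^{-1}\|_{\cC^3}^{3}.
\]
(I am reading the displayed conclusion in the statement as $\|f\circ g^{-1}\circ h-f\|_{\cC^2}$ rather than $\|f\circ g^{-1}\circ h-\text{id}\|_{\cC^2}$, since only the former is consistent with the hypotheses, as seen by setting $g=h$.) The $\cC^3$ regularity of $f\circ g^{-1}\circ h$ follows from the chain rule, and invertibility of this composition — when needed — follows by composing the inverses, provided $\ve$ is small enough that $\Psi$ is a diffeomorphism (which, in turn, follows from $\|D\Psi-\Id\|_{\cC^0}<1$, a consequence of the first estimate for $\ve$ small).

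The only non-mechanical step is the bookkeeping in the $\cC^2$ Faà di Bruno expansion, which is tedious but poses no conceptual difficulty; the rest is triangle inequalities and the fundamental theorem of calculus.
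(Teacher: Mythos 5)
Your proposal is correct and follows essentially the same path as the paper: write $g^{-1}\circ h - \mathrm{id} = g^{-1}\circ h - g^{-1}\circ g$, differentiate the composition twice, and absorb the uniformly bounded $\|g\|_{\cC^2}$, $\|h\|_{\cC^2}$ into the constant; the paper's algebra uses the identity $D(g^{-1}\circ h) = (Dg)^{-1}\circ\Psi\cdot Dh$ with $\Psi=g^{-1}\circ h$ rather than your mean-value telescoping, but the estimates are the same. You also correctly diagnose the typo in the second displayed inequality (the subtrahend should be $f$, not $\mathrm{id}$), which is indeed what the paper actually proves.
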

\begin{proof} 
Since \(g\in\cC^3\) is invertible, by inverse function theorem \(g^{-1}\in\cC^3\) and therefore being a composition of $\cC^3$ functions \( g^{-1}\circ h\in \cC^3\).
Next, let $\Psi=g^{-1}\circ h$. Then
\[
\begin{split}
&\|\Psi-id\|_{\cC^0}=\|g^{-1}\circ h-g^{-1}\circ g\|_{\cC^0}\leq \|g^{-1}\|_{\cC^1}\ve\\
&\|D(\Psi-id)\|_{\cC^0}=\|(Dg)^{-1}\circ \Psi\cdot Dh-\Id\|_{\cC^0}\leq \|g^{-1}\|_{\cC^1}\|g^{-1}\|_{\cC^2}\ve\\
&\|\partial_x D(\Psi-id)\|_{\cC^0}=\|\partial_y[(Dg)^{-1}]\circ \Psi\partial_x \Psi_y\cdot Dh+(Dg)^{-1}\circ \Psi\cdot\partial_x Dh\|_{\cC^0}\\
&\leq \|\partial_y[(Dg)^{-1}]\circ \Psi\partial_x \Psi_y \cdot Dg+(Dg)^{-1}\circ \Psi\cdot\partial_x Dg\|_{\cC^0}+\Const\ve \|g^{-1}\|_{\cC^2}^2\\
&\leq \|\partial_x[(Dg)^{-1}\circ \Psi \cdot Dg]\|_{\cC^0}+\Const\ve \|g^{-1}\|_{\cC^2}^2\\
&\leq \|\partial_x[(Dg)^{-1} \cdot Dg]\|_{\cC^0}+\Const\ve \|(Dg)^{-1}\|_{\cC^3}^3=\Const\ve \|g^{-1}\|_{\cC^3}^3.
\end{split}
\]
Accordingly, $\|\Psi-id\|_{\cC^2}\leq \Const\ve \|(Dg)^{-1}\|_{\cC^3}$. Moreover, the first part of Lemma \ref{lem:good_pert} implies that $\Psi$ is invertible and $\|\Psi^{-1}-id\|_{\cC^2}\leq \Const\ve \|(Dg)^{-1}\|_{\cC^3}$.
This implies, $f\circ \Psi, \Psi^{-1}\circ f^{-1}\in\cC^3$ and
\[
\begin{split}
&\left\|f\circ \Psi-f\right\|_{\cC^0}\leq  \Const\ve\|f\|_{\cC^1} \|g^{-1}\|_{\cC^1}\\
&\left\| D(f\circ \Psi-f)\right\|_{\cC^0}=\bigl\|(Df)\circ \Psi\cdot D\Psi-Df \bigr\|_{\cC^0}\le\Const \ve\|f\|_{\cC^2}\|g^{-1}\|_{\cC^2}^2\\
&\left\| \partial_x D(f\circ \Psi-f)\right\|_{\cC^0}=\left\|[\partial_y(Df)]\circ \Psi\cdot \partial_x\Psi_y\cdot D\Psi+(Df)\circ \Psi\cdot\partial_x D\Psi -\partial_xDf \right\|_{\cC^0}\\
&\phantom{\left\| \partial_x D(f\circ \Psi-f)\right\|_{\cC^0}}
\leq \Const \|f\|_{\cC^3}\|g^{-1}\|_{\cC^3}^3,
\end{split}
\]
from which the Lemma follows.
\end{proof}

Let \(f\) be a piecewise smooth contraction with an associated IFS \(\Phi=\{\phi_1,\ldots,\phi_m\}\) and as in equation~(\ref{eq:admissible}), \(\Sigma^m_{n,i}(\Phi)\) be the set of \(i\)-admissible sequences and as in equation~(\ref{eq:bdnbd}), \(D^{N}_\delta(\Phi)\) be a \(\delta-\)neighbourhood of the boundary of partition \(\bsP(f^{N})\) for \(\delta>0\). 
We have the following result:
\begin{lem}\label{lem:good_diff}
	 For a piecewise smooth contraction \(f\) with IFS \(\Phi=\{\phi_1,\ldots,\phi_m\}\) and \(N\in\bN\), there exists \(\ve>0\) such that for \(\tilde f\) with associated IFS \(\tilde \Phi=\{\tilde\phi_1,\ldots,\tilde\phi_m\}\)  satisfying \(d_2(f,\tilde f)<\ve\), we have \(\Sigma^m_{n,i}(\Phi)=\Sigma^m_{n,i}(\tilde\Phi)\). Moreover, there exists \(\delta>0\) such that \(D^{N}_{\delta/4}(\tilde\Phi)\subset D^{N}_{\delta/2}(\Phi)\).
\end{lem}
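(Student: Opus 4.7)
The proof hinges on two standard observations: (i) since $d_2(f, \tilde f) < \ve$ gives $\|\phi_i - \tilde\phi_i\|_{\cC^2} < \ve$ for every $i$, the compositions $\phi_\sigma:=\phi_{\sigma_1}\circ\cdots\circ\phi_{\sigma_n}$ and $\tilde\phi_\sigma$ remain $\cC^2$-close uniformly over $\sigma\in\bigcup_{n\leq N}\Sigma^m_n$; and (ii) the latter set is finite, so the perturbation can be chosen small enough to handle all sequences simultaneously.

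My first step is to establish the uniform composition bound: by the chain rule together with repeated application of Lemma~\ref{lem:good_pert2} (which controls pre- and post-composition with $\cC^2$-close diffeomorphisms), there exists a constant $C_N$, depending only on $N$ and on the $\cC^3$ norms of the $\phi_i$, such that $\|\phi_\sigma - \tilde\phi_\sigma\|_{\cC^2}\leq C_N \ve$ for all $\sigma$ with $|\sigma|\leq N$. The same estimate holds for the local inverses on the domains where invertibility has been established in Remark~\ref{rem:nice_nbd}.

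Next I would prove the equality of the admissible sets one sequence at a time, noting that both directions are needed. Given $\sigma\in \Sigma^m_{n,i}(\Phi)$, pick $x\in D_\sigma(\Phi)$ with $\phi_\sigma(x)\in \psi_i(D_i)$; both conditions define open sets, so the strict membership in the open $\epsilon/2$-neighborhoods $V^-_{\sigma_k}$ is preserved by $\cC^0$-small perturbations, yielding $x\in D_\sigma(\tilde\Phi)$ as well. The inverse function theorem applied to $\tilde\phi_\sigma$ (which is $\cC^2$-close and locally a diffeomorphism by the invertibility statement in Remark~\ref{rem:nice_nbd}) then produces $\tilde x$ near $x$ with $\tilde\phi_\sigma(\tilde x)\in \psi_i(D_i)$, proving $\sigma\in \Sigma^m_{n,i}(\tilde\Phi)$. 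For the reverse inclusion I argue by contradiction: a hypothetical sequence $\tilde\Phi_k\to \Phi$ with $\sigma\in \Sigma^m_{n,i}(\tilde \Phi_k)\setminus \Sigma^m_{n,i}(\Phi)$ produces, by compactness of $Y$, a limit point $x$ with all the intermediate iterates $\phi_{\sigma_{k+1}}\circ\cdots\circ\phi_{\sigma_n}(x)$ lying in $\overline{V^-_{\sigma_k}}\subset V_{\sigma_k}$ and with $\phi_\sigma(x)\in \overline{\psi_i(D_i)}$; the condition $\partial\psi_i(D_i)\cap \overline{V_p}=\emptyset$ from Definition~\ref{def:psc} and the slack between $V^-$ and $V$ built into the definition \eqref{eq:Ddef} force $\phi_\sigma(x)\in \psi_i(D_i)$ and $x\in D_\sigma(\Phi)$, contradicting $\sigma\notin\Sigma^m_{n,i}(\Phi)$. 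Taking $\ve$ small enough to handle all finitely many pairs $(\sigma,i)$ with $n\leq N$ gives the first conclusion.

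For the second conclusion, once the admissible sets agree, each boundary manifold in $D^N(\tilde\Phi)$ has the form $\tilde\phi_{\sigma_n}^{-1}\circ \cdots \circ \tilde\phi_{\sigma_1}^{-1}\circ\psi_i(A)$ with exactly the same labels $(\sigma, i, A)$ as the corresponding manifold in $D^N(\Phi)$. The bound on inverses from Step~1 gives Hausdorff proximity at most $C_N\ve$ between corresponding manifolds, so choosing $\ve$ so small that $C_N\ve\leq \delta/4$ yields $D^N_{\delta/4}(\tilde\Phi)\subset D^N_{\delta/2}(\Phi)$ by the triangle inequality. The main difficulty lies in the reverse inclusion of Step~2: a priori $D_\sigma(\tilde\Phi)$ could be strictly larger than $D_\sigma(\Phi)$, and it is precisely the gap between $V^-$ and $V$ together with the interior-in-$V$ condition on the original boundary manifolds that rule out the emergence of new admissible words under perturbation.
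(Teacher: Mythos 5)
Your overall strategy coincides with the paper's: control the finitely many relevant compositions and their local inverses uniformly in $\cC^0$ (you route this through Lemma~\ref{lem:good_pert2}, the paper telescopes directly to get $\|\tilde\phi_{\sigma_n}^{-1}\circ\cdots\circ\tilde\phi_{\sigma_1}^{-1}-\phi_{\sigma_n}^{-1}\circ\cdots\circ\phi_{\sigma_1}^{-1}\|_{\cC^0}\leq M^n(1-\lambda)^{-1}\sup_i\|\phi_i-\tilde\phi_i\|_{\cC^0}$ --- same content), then read off both conclusions from the definitions. The paper, however, disposes of the first conclusion in a single sentence (``by definition \eqref{eq:admissible} there exists $\ve_0$'') and devotes its effort to the $D^N_\delta$ inclusion. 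You correctly notice that the equality of admissible sets is a two-sided assertion that deserves more care, and your treatment of the forward inclusion $\Sigma^m_{n,i}(\Phi)\subseteq\Sigma^m_{n,i}(\tilde\Phi)$ (openness of the defining conditions plus finiteness of $\bigcup_{n\leq N}\Sigma^m_n$) is fine.

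The contradiction argument for the reverse inclusion, though, has a genuine gap. Passing to a limit of $x_k\in D_\sigma(\tilde\Phi_k)$ with $\tilde\phi^{(k)}_\sigma(x_k)\in\psi_i(D_i)$, you obtain a point $x$ whose intermediate images lie in $\overline{V^-_{\sigma_j}}$ and with $\phi_\sigma(x)\in\overline{\psi_i(D_i)}$. The slack $\overline{V^-_j}\subset V_j$ only places $x$ in the \emph{larger} set $D^+_\sigma(\Phi)$, not in the open set $D_\sigma(\Phi)$ that actually enters \eqref{eq:admissible}; $x$ may sit on $\partial D_\sigma(\Phi)$, where some iterate lies on $\partial V^-_{\sigma_j}$. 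Likewise, $\phi_\sigma(x)\in\overline{\psi_i(D_i)}$ does not give $\phi_\sigma(x)\in\psi_i(D_i)$: the hypothesis $\partial\psi_i(D_i)\cap\overline V_p=\emptyset$ only excludes the boundary of the embedded chart from $\overline V_p$, and nothing in \eqref{eq:Ddef} guarantees that $\phi_\sigma(x)$ lands in $\overline V_p$ (there is no constraint at level $k=0$). So the claimed contradiction with $\sigma\notin\Sigma^m_{n,i}(\Phi)$ does not follow; you would need an additional argument (or a nondegeneracy assumption) to rule out the borderline situation in which $\phi_\sigma(\overline{D_\sigma(\Phi)})$ meets $\overline{\psi_i(D_i)}$ only in $\partial D_\sigma(\Phi)\cup\partial\psi_i(D_i)$, which is precisely the case a perturbation can promote to a genuine intersection.

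For the second conclusion your argument is correct and is the same as the paper's, with one caveat worth flagging: the Besicovitch balls $\{B_{\delta(y_k)}(y_k)\}$ in \eqref{eq:many-manifolds} a priori depend on $\Phi$, so asserting that corresponding manifolds in $D^N(\tilde\Phi)$ and $D^N(\Phi)$ carry ``exactly the same labels'' requires fixing the cover before perturbing --- but this is the same implicit choice the paper makes, so it is not a departure.
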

\begin{proof} By hypothesis there exists $M>0$ such that $\|\phi_i|_{U_i}^{-1}\|_{\cC^1}\leq M$ and we can restrict to such set by the definition of $D^{N}$ which entails only admissible sequences. Thus
\begin{align*}
	 \norm{\tilde\phi_i^{-1}-\phi_i^{-1}}_{\cC^0}&=\norm{id-\phi_i^{-1}\circ\tilde\phi_i}_{\cC^0}=\norm{\phi_i^{-1}\circ\phi_i-\phi_i^{-1}\circ\tilde\phi_i}_{\cC^0}\\
	 &\leq \norm{\phi_i^{-1}}_{\cC^1} \norm{\phi_i-\tilde\phi_i}_{\cC^0}\leq M\norm{\phi_i-\tilde\phi_i}_{\cC^0}.
\end{align*}
For \(n\leq N\) and  admissible sequence \(\sigma=(\sigma_1,\sigma_2,\ldots,\sigma_n)\in\Sigma^m_n\), we have 
\[
\begin{split}
     &\norm{\tilde\phi_{\sigma_n}^{-1}\circ\tilde\phi_{\sigma_{n-1}}^{-1}\circ\cdots\circ \tilde\phi_{\sigma_1}^{-1}-\phi_{\sigma_n}^{-1}\circ\phi_{\sigma_{n-1}}^{-1}\circ\cdots\circ\phi_{\sigma_1}^{-1}}_{\cC^0}=\\
     & =\norm{id-\phi_{\sigma_n}^{-1}\circ\cdots\circ\phi_{\sigma_1}^{-1}\circ\tilde\phi_{\sigma_1}\circ\cdots\circ\tilde\phi_{\sigma_n}}_{\cC^0}\\
	 &=\norm{\phi_{\sigma_n}^{-1}\circ\cdots\circ\phi_{\sigma_1}^{-1}\circ\phi_{\sigma_1}\circ\cdots\circ\phi_{\sigma_n} -\phi_{\sigma_n}^{-1}\circ\cdots\circ\phi_{\sigma_1}^{-1}\circ\tilde\phi_{\sigma_1}\circ\cdots\circ\tilde\phi_{\sigma_n}}_{\cC^0}\\
	 &\leq M^n\sum_{i=1}^{n}\norm{\phi_{\sigma_1}\circ\cdots\circ\phi_{\sigma_{i}}\circ \tilde\phi_{\sigma_{i+1}}\circ\cdots\circ\tilde\phi_{\sigma_n}-\phi_{\sigma_1}\circ\cdots\circ \phi_{\sigma_{i-1}}\circ \tilde\phi_{\sigma_{i}}\circ\cdots\circ\tilde\phi_{\sigma_n}}_{\cC^0}\\
	 &\leq M^n(1-\lambda)^{-1}\sup_i\norm{\phi_i-\tilde\phi_i}_{\cC^0}.
\end{split}
\]
Note that, by definition \eqref{eq:admissible}, there exists $\ve_0>0$ such that, for each $\ve$-perturbation $\tilde\Phi$ of $\Phi$, with $\ve\in (0,\ve_0)$, $\Sigma^m_{n,i}(\Phi)=\Sigma^m_{n,i}(\tilde \Phi)$. Moreover, for each $\sigma \in \Sigma^m_{n,i}(\Phi)$ and $\xi\in \psi_i^{-1}\left(M_{\sigma,i}(\Phi)\right)$,
\[
\|\phi_{\sigma_n}^{-1}\circ\cdots\circ\phi_{\sigma_1}^{-1}\circ\psi_i(\xi)-\tilde \phi_{\sigma_n}^{-1}\circ\cdots\circ\tilde \phi_{\sigma_1}^{-1}\circ\psi_i(\xi)\|\leq  M^n(1-\lambda)^{-1}\ve
\]
Thus, for $\ve$ small enough $D^{N}(\tilde \Phi)\subset D^{N}_{\delta/4}(\Phi)$, hence $D^{N}_{\delta/4}(\tilde \Phi)\subset D^{N}_{\delta/2}(\Phi)$.
\end{proof}

\clearpage

\end{document}